%% file: arxiv.tex
\documentclass[11pt]{article}
\usepackage{fullpage}
\usepackage{natbib}
\usepackage{graphicx} 

\usepackage[utf8]{inputenc} 
\usepackage[T1]{fontenc}    
\usepackage[colorlinks,allcolors=blue,backref=page]{hyperref}     
\renewcommand*{\backref}[1]{}
\renewcommand*{\backrefalt}[4]{%
    \ifcase #1 (Not cited.)%
    \or        (Cited on page~#2.)%
    \else      (Cited on pages~#2.)%
    \fi}
\usepackage{url}            
\usepackage{booktabs}       
\usepackage{amsfonts}       
\usepackage{nicefrac}       
\usepackage{microtype}      
\usepackage{xcolor}         

\usepackage{amsthm}
\usepackage{amsmath}
\usepackage{cleveref}

\usepackage{amsfonts}
\usepackage{amssymb}
\usepackage{xpatch}
\usepackage{enumitem}

\usepackage{algorithm}
\usepackage{algpseudocode}

\usepackage{graphicx}
\usepackage{thmtools,thm-restate}
\usepackage{pifont}%

\usepackage{multirow}
\usepackage{wrapfig}
\usepackage{tikz}
\tikzstyle{vertex}=[circle, draw, inner sep=0pt, minimum size=6pt]
\usetikzlibrary{arrows}

\usepackage{xcolor}
\usepackage{colortbl} 
\usepackage{tabularx} 
\usepackage{makecell}  
\usepackage{tabstackengine} 
\usepackage{adjustbox} 
\usepackage{footnote}
\usepackage{caption}
\makesavenoteenv{tabular}
\usepackage{titletoc} 
\usepackage{comment}

\newtheorem{theorem}{Theorem}
\newtheorem{lemma}{Lemma}
\newtheorem{remark}{Remark}

\newtheorem{definition}{Definition}

\usepackage{thm-restate} 

\DeclareMathOperator\AndKeyWord{and}

   \DeclareMathOperator*{\argmin}{argmin}

\DeclareMathOperator\supp{supp}
\DeclareMathOperator\prog{prog}
\DeclareMathOperator\prev{prev}
\usepackage{thm-restate}

\usepackage{algpseudocode}
\usepackage{chngcntr} 
\usepackage[top=1in, right=1in, left=1in, bottom=1in]{geometry}

\input{macros}

\title{On the Limits of Biased Derivative Information for\\Nonconvex Stochastic Optimization}

\author{
\begin{tabular}{c@{\hspace{2cm}}c}
Anant Shyam & Brian Bullins \\
Purdue University & Purdue University \\
\texttt{shyama@purdue.edu} & \texttt{bbullins@purdue.edu}
\end{tabular}
}
\date{}
\begin{document}

\maketitle
\begin{abstract}
We consider the problem of finding $\delta$-stationary points for $\delta > 0$, i.e., $x \in \mathbb{R}^d$ such that $||\nabla F(x)|| \le \delta$, for smooth, non-convex objectives, where the derivative oracles are not only stochastic but also biased. In the first-order setting, we provide tight lower bounds for finding an $O((\epsilon + B^2)^{1/2})$-stationary point, for $\epsilon > 0$ and where $B$ is a bound on the gradient bias, matching the upper bounds of Ajalloeian and Stich (2020). We then establish bias-dependent lower bounds for algorithms that use higher-order derivative information for finding $O(\epsilon + B_{\max})$-stationary points, where $B_{\max}$ is a bound on the maximum bias for all derivatives. To complement these lower bounds, we develop trust-region based methods that, for certain ranges of bias, provide guarantees that match the corresponding lower bounds. We further improve upon the oracle complexity in high bias settings through a higher-order variance reduction scheme, in particular demonstrating the benefits, in some cases, of using higher-order derivative information, whereas such improvements are known to be unattainable for stochastic unbiased settings. 
\end{abstract}

\section{Introduction}
For a function $F: \mathbb{R}^d \rightarrow \mathbb{R}$ 
which has Lipschitz continuous first- and high-order derivatives, and bounded suboptimality $\Delta$ such that $F(0) - \inf_{x \in \mathbb{R}^d} F(x) \le \Delta$, we focus on the task of finding an $\epsilon$-stationary point: that is, $x \in \mathbb{R}^d$ such that, for $\epsilon > 0$, \begin{flalign*}
    &\norm{\grad F(x)} \le \epsilon.
\end{flalign*}
The problem of finding stationary points has been explored in numerous previous works for both convex~\cite{nesterov2012make,allen2018make} and non-convex~\cite{dimensionfreeacelerationgdconvex,stochasticfirstandzerothorder,LowerBoundsI, LowerBoundsII} settings, as well as for the more general context of finding approximate local minima~\cite{findingapproxlocalminimafastergd,carmon2018accelerated}. 
\newline \newline 
Meanwhile, there has been another line of work~\cite{SecondOrderOptNonconvexStochastic, stochasticcubicreg, stochasticfirstandzerothorder, allen2018make, allen2018natasha, stochasticfirstandzerothorder, restartedsgd, lowerboundsnonconvexstochastic, stochasticcubicreg, spider, stochasticnestedvarreduction}  investigating, for stochastic settings, the oracle complexity of finding $\epsilon$-stationary points in expectation, i.e, $\mathbb{E}||\nabla F(x)|| \le \epsilon$. For such settings, which have been widely studied in machine learning contexts~\cite{adam, lion, liu2025muon, sophia, stacey, bernstein2018signsgd}, access to the objective function is restricted to stochastic estimates of its derivatives: for each query point $x$ and derivative order $i$, a stochastic $i^{th}$ order derivative oracle returns $\widehat{\nabla}^i F(x, \xi)$, where $\xi_i$ is random variable drawn from some distribution $P_\xi$ such that \begin{flalign*}
    &\mathbb{E}[\widehat{\nabla}^iF(x, \xi_i)] = \nabla^i F(x)  \hspace{0.5em} \AndKeyWord \hspace{0.5em}\mathbb{E}||\widehat{\nabla}^i F(x, \xi_i) - \nabla^iF(x)||_\mathrm{op}^2 \le \sigma_i^2. 
\end{flalign*}
For objectives with Lipschitz continuous gradients, one can achieve an $O(\epsilon^{-4})$ oracle complexity assuming access to only a stochastic gradient oracle~\cite{stochasticfirstandzerothorder}. With additional assumptions on the objective (e.g Lipschitz continuous Hessian, mean-squared smoothness) and the oracle access (i.e the ability to query a stochastic, second order derivative oracle), it is possible to achieve an oracle complexity of $O(\epsilon^{-3})$~\cite{spider, SecondOrderOptNonconvexStochastic}. Furthermore, Arjevani et al. \cite{SecondOrderOptNonconvexStochastic} showed that $O(\epsilon^{-3})$ was the optimal oracle complexity assuming access to not just stochastic second order oracles but stochastic higher derivative oracles for \emph{all} derivatives $p \ge 2$, as well as that the optimal complexity assuming just first-order stochastic oracle access was $O(\epsilon^{-4})$, thereby introducing an \emph{elbow effect}.
\newline \newline 
However, in many practical settings, it becomes necessary to relax these unbiased derivative assumptions, with such examples appearing in distributed \cite{dryden2016communication,aji2017sparse, alistarh2018convergence}, gradient-free \cite{nesterov2017random}, and bandit convex optimization \cite{hu2016bandit}, thus highlighting the need to better understand the limits of working with biased derivative information. In such settings, the objective function is further restricted to stochastic and biased estimates of its derivatives: for each query point $x$ and derivative order $i$, a stochastic and biased $i^{th}$ order derivative oracle returns $\tilde{\nabla}^i F(x, \xi_i, b_i)$, where $\xi_i$ is a random variable drawn from some distribution $P_\xi$ and $b_i$ represents the bias of the $i^{th}$ derivative estimate, such that \begin{flalign*}
    &\tilde{\nabla}^i F(x, \xi_i, b_i) = \nabla^i F(x) + \xi_i(x, z) + b_i(x)
\end{flalign*} where \begin{flalign*}
    &\mathbb{E}_{z \sim P_z}[\xi_i(x, z)] = 0 \hspace{0.5em} \AndKeyWord \hspace{0.5em} \mathbb{E}||\xi_i(x, z)||_\mathrm{op}^2 \le \sigma_i^2 \hspace{0.5em} \AndKeyWord \hspace{0.5em} ||b_i(x)||_\mathrm{op} \le B_i
\end{flalign*}
In this paper, we establish the limits of biased derivative information for both first- and higher-order settings. 
We first complement the first-order upper bound in \cite{OnConvergenceSGDBiased} by providing a matching lower bound. We then show how to handle biased and stochastic \emph{high-order} derivative information, through providing corresponding higher-order lower bounds and developing higher-order trust-region and variance-reduction based algorithms to complement these lower bounds. Moreover, we further show that, unlike in the \emph{unbiased} case \citep{SecondOrderOptNonconvexStochastic}, appealing to higher order information beyond second-order information \emph{can} offer benefits for certain ranges of bias. 
\subsection{Our Main Contributions}
Below, we outline the main contributions of this paper. 
\paragraph{First-Order Lower Bound.} We begin by considering in Section \ref{section: lowerbounds} the first order oracle complexity for finding a stationary point in expectation. 
In particular, letting $B_1$, $L_1$, $\Delta$ and $\sigma_1$ represent bounds on the gradient bias, Lipschitz constant, suboptimality, and gradient variance, respectively, we provide, in Theorem \ref{thm:firstorderlowerbound}, a lower bound of \[ \Omega\left(\frac{\Delta L_1}{\epsilon + B_1^2} + \frac{\Delta L_1 \sigma_1^2}{\epsilon^2 + B_1^4}\right) \] on the biased-stochastic-first-order query complexity for finding an $O((\epsilon + B_1^2)^{1/2})$ stationary point in expectation. As a consequence, our results imply that the upper bounds of Ajalloeian and Stich~\cite{OnConvergenceSGDBiased} for this setting are tight. 
\paragraph{Higher-Order Lower Bound.} Given these tight lower bounds in the first-order setting, it is natural to consider, as we address in Section~\ref{section: lowerbounds}, alternative approaches based on higher-order information. To this end, we establish, in Theorem \ref{thm:higherorderlowerbound}, the limits of algorithms that use derivative orders $p \ge 2$ for finding $O(\epsilon + B)$ stationary points in expectation. In addition, as a special case of our rates, we recover previous lower bounds in the unbiased setting~\cite{SecondOrderOptNonconvexStochastic}.
\paragraph{Minibatch Derivative Estimation.} Complementing the lower bounds above, we provide in Section~\ref{section: upperbounds} upper bounds on convergence rates for techniques based on higher-order derivative information. Specifically, we establish in Theorem~\ref{thm:upperbounddeterministic} high-order oracle complexities for finding approximate stationary points in the case where $B \ge \omega(1)$.
We derive similar bounds for low-bias regimes, and we further note that our lower bounds are tight for $B \le O(\epsilon)$. To achieve these bounds, we develop an algorithm in which each derivative estimate $D^i$ can be computed as an average of $n_i$ calls to the $i^{th}$-order biased and stochastic derivative oracle  At each step, we solve a regularized higher-order Taylor subproblem. 
\paragraph{Variance Reduction Based Derivative Estimation.} Given numerous prior works which show the advantages of using variance reduction in derivative estimation, we also utilize, in Section~\ref{section: upperbounds}, variance reduction based techniques in order to improve the oracle complexity bound for finding stationary points. In theorem \ref{thm:rvr-upperbound}, we derive an improved bound for the $B = \Theta(1)$ case for finding $O(\epsilon + B)$ stationary points in expectation, and derive a similar upper bound for the $B \ge \omega(1)$ case, though with slightly weaker stationary guarantees. To achieve these bounds, we devise a higher-order recursive variance reduction scheme to construct the derivative estimates, and like before, minimize the regularized higher-order Taylor model at each iteration. 
Notably, unlike the stochastic unbiased setting, here we do in fact realize benefits by appealing to higher-order derivative information for the constant bias setting. 
\subsection{Additional Related Works}
\paragraph{Biased Gradient Methods.} We briefly discuss some additional prior work related to biased gradient methods. In \cite{biasedstochasticfirstordermetalearning}, the authors proposed a biased SGD algorithm and analyzed the sample complexities for convex and nonconvex objectives. In \cite{stochgradmethodbiasedestimation}, the authors explore a balance between biased and unbiased estimation of the gradient to resolve the tradeoff between the cost and benefit of computing an unbiased derivative estimation. In \cite{biasedstochgradestimation, stochcontrollablebiasedoracles}, the authors also present algorithms under biased gradient estimation. In \cite{demidovich2024guide}, the authors present an analysis of biased SGD is convex and nonconvex settings, under weaker assumptions than many previous works. 
\paragraph{Deterministic Oracles.} 
Also of interest has been the problem of finding $\epsilon$-stationary points for smooth nonconvex objectives in the case of deterministic (i.e., noiseless and unbiased) oracles. First, an improved $O(\epsilon^{-\frac{7}{4}}) $ query complexity for first-order methods has been shown \cite{dimensionfreeacelerationgdconvex,li2023restarted} when assuming that the Hessian is also Lipschitz continuous, and this has recently been shown to be tight~\cite{zhou2026sharp} due to a new lower bound that improves upon the previous $\Omega(\epsilon^{-\frac{12}{7}})$ lower bound~\citep{LowerBoundsII}. Furthermore, when using $p^{th}$ order oracles (assuming that all $p$ derivatives are Lipschitz continuous), the optimal oracle complexity is $O(\epsilon^{-\frac{p+1}{p}})$~\cite{LowerBoundsI,birgin2017worst}. Moreover, in \cite{findingapproxlocalminimafastergd}, the authors present a method that uses a cubic regularized Newton step to achieve an $\tilde{O}(\epsilon^{-\frac{7}{4}})$ oracle complexity for finding an $\epsilon$-approximate local minimum, i.e., an $\epsilon$-stationary point $x$ that also satisfies $\nabla^2 f(x) \succeq -\sqrt{\epsilon} I$. 
\subsection{Paper Organization} 
We formally introduce our problem setup in Section \ref{section:setup_background}, including the function class and derivative oracle properties. In Section \ref{section: lowerbounds}, we present the lower bounds for both the first and higher order settings. In Section \ref{section: upperbounds}, we present algorithms that use minibatch-based derivative estimation (Algorithm \ref{alg:novariancereduction}) and variance reduction based derivative estimation (Algorithm \ref{alg:var-reduction-pth-order}). 
\newline \newline 
\textbf{Notation.} 
For some $1 \le i \leq p$, let $\nabla^i F$ refer to the $i^{th}$ derivative of a function $F \in \mathcal{C}^p$, where $\mathcal{C}^p$ denotes the set of $p$ times differentiable, continuous functions. For all $i$, $[\nabla^i F(x)]_{j_1, \ldots, j_i} = \frac{\partial^i F}{\partial x_{j_1} \ldots \partial x_{j_i}}$. For matrices $A$ and tensors $T$, $||\cdot ||_\mathrm{op}$ denotes the operator-2 norm, and unless otherwise specified, $||\cdot||$ refers to the Euclidean norm (if dealing with a particular $x \in \mathbb{R}^d$) or the operator-2 norm (if dealing with any matrices or tensors) as well. For a symmetric tensor $T$, we let $||T||_\mathrm{op} = \sup_{||v|| = 1} |\langle T, v, \ldots, v  \rangle |$. We also let $B = \max_{1 \le i \le p} B_i$. 
\section{Setup and Background}
\label{section:setup_background}
\subsection{Function Class}
We consider smooth,  differentiable functions in the following function class: 
\begin{flalign*}
    &\mathcal{F}_p(\Delta, L_{1: p}) = \{F: \mathbb{R}^d \rightarrow \mathbb{R}: ||\nabla^q F(x) - \nabla^q F(y)|| \le L_q ||x - y|| \hspace{0.25em} \forall x, y \in \mathbb{R}^d, q \in \{1, \ldots, p \} \}
\end{flalign*} where for all $F \in \mathcal{F}_p(\Delta, L_{1: p})$, we have that \begin{flalign*}
    &F(0) - \inf_{x \in \mathbb{R}^d} F(x) \le \Delta 
\end{flalign*} 
where $L_{1: p} = (L_1, \ldots, L_{p})$ represent the Lipschitz constants of the first to $p^{th}$ derivatives of $F$. 
\subsection{Oracles}
For such a function $F \in \mathcal{F}_p(\Delta, L_{1: p})$, we consider a class of biased and stochastic derivative oracles for $\nabla^1F, \ldots, \nabla^p F$ defined by a distribution $P_z$ over a measurable set $\mathcal{Z}$ and an estimator \begin{flalign*}
    &O_F^p(x, z, b) := (\tilde{F}(x, z, b_0), \tilde{\nabla} F(x, z, b_1), \ldots, \tilde{\nabla}^p F(x, z, b_p))
\end{flalign*} where $\tilde{\nabla}^qF(x, z, b_q) $ is a biased and stochastic estimate for $\nabla^q F(x)$. For all $x$ and $q \in [p]$, we have that $\tilde{\nabla}^q F(x, z, b) = \nabla F^q(x) + \xi_q(x, z) + b_q(x)$, where $\mathbb{E}_{z \sim P_z} [\xi_{q}(x, z)] = 0$,  $\mathbb{E}||\xi_{q}(x, z)||^2 \le \sigma_q^2$, and $||b_q(x)|| \le B_q$. Given variance parameters $\sigma_{1: p}$ and bias parameters $B_{1: p}$, we define the oracle class $\mathcal{O}_p(F, \sigma_{1: p}, B_{1: p})$ to be the set of all biased and stochastic $p^{th}$ order oracles such that the conditions above hold. 
\section{Lower Bounds}
\label{section: lowerbounds}
We first consider the scenario of finding an $O(f(\epsilon) + g(B_1, \ldots, B_p))$ stationary point in expectation, where $f$ and $g$ are positive functions of the precision parameter $\epsilon$ and the bias terms respectively. Previous work~\cite{OnConvergenceSGDBiased} has shown how to achieve an upper bound of 
 \begin{flalign*}
    &O\left(\frac{\Delta L_1}{\epsilon + B_1^2} + \frac{\Delta L_1\sigma_1^2}{\epsilon^2 + B_1^4}\right)
\end{flalign*} on the number of stochastic first-order oracle queries for finding iterates $\{x_t \}$ where $\frac{1}{T} \sum_{t = 0}^{T - 1} \mathbb{E}||\nabla F(x^{(t)})||^2 = O(\epsilon + B_1^2)$, which implies that $\min_t ||\nabla F(x^{(t)})|| \leq O((\epsilon + B_1^2)^{\frac{1}{2}})$.
Thus, we begin by establishing this oracle complexity is optimal by providing a matching lower bound, as presented in Theorem \ref{thm:firstorderlowerbound}. 
\begin{theorem}
    \label{thm:firstorderlowerbound} 
    When $p = 1$, there exists $F \in \mathcal{F}_1(\Delta, L_1)$ and $(O_F^1, P_z) \in \mathcal{O}_1(F, \sigma_1, B_1)$ such that for any first-order zero-respecting  algorithm (Definition \ref{definition:zerorespectingalg}) 
    where $\epsilon < \frac{1}{4}$ and $B_1 \le O(1)$, the minimum number of queries to obtain a $(\epsilon + B_1^2)^{\frac{1}{2}}$ stationary point with constant probability is bounded below by \begin{flalign*}
        &\Omega \left(\frac{\Delta L_1}{\epsilon + B_1^2} + \frac{\Delta L_1 \sigma_1^2}{\epsilon^2 + B_1^4}\right)
    \end{flalign*}
\end{theorem}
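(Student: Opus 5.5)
Our plan is to reduce to the unbiased lower-bound construction of Arjevani et al.\ (the probability-$\zeta$ zero chain of \cite{lowerboundsnonconvexstochastic}), run at a coarser accuracy $\epsilon' \asymp (\epsilon + B_1^2)^{1/2}$. Both terms of the target bound have the form $\Delta L_1/\epsilon'^2$ and $\Delta L_1\sigma_1^2/\epsilon'^4$, since $\epsilon'^4 \asymp \epsilon^2 + B_1^4$. So it suffices to show two things. First, the bias budget can only help the oracle designer, and never helps the algorithm. Second, the unbiased construction at accuracy $\epsilon'$ already fits inside $\mathcal{O}_1(F,\sigma_1,B_1)$. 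The second point is immediate: take $b_1 \equiv 0$, because $\|b_1\| \le B_1$ is an upper bound only. The whole argument is therefore an application of the known unbiased first-order lower bound with $\epsilon$ replaced by $c(\epsilon+B_1^2)^{1/2}$, and the work is in checking its parameter constraints.

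\textbf{Construction.} We take the scaled chain $F(x) = \frac{L_1\lambda^2}{\ell_1}\bar F_T(x/\lambda)$, with $\lambda \asymp \epsilon'/L_1$ and $T \asymp \Delta L_1/\epsilon'^2$ (suitably scaled by constants), so that $F \in \mathcal{F}_1(\Delta, L_1)$. The gradient oracle zeroes the partial derivative along the next ``progress'' coordinate $\mathrm{prog}(x)+1$ except with probability $\zeta$, and rescales it by $1/\zeta$. This oracle is unbiased, and its variance is at most $O((1-\zeta)\epsilon'^2/\zeta) \le \sigma_1^2$ once we choose $\zeta \asymp \min(1, \epsilon'^2/\sigma_1^2)$. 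We then apply the standard zero-respecting argument. Each query advances progress by at most one coordinate, and only with probability $\zeta$. While $\mathrm{prog}(x) < T$, we have $\|\nabla F(x)\| > \epsilon'$. By a Chernoff/geometric bound, with constant probability the number of queries needed is at least $T/(2\zeta) \gtrsim \Delta L_1/\epsilon'^2 + \Delta L_1\sigma_1^2/\epsilon'^4$. Substituting $\epsilon'^2 = \epsilon + B_1^2$ and $\epsilon'^4 \ge \frac12(\epsilon^2+B_1^4)$ gives the stated bound.

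\textbf{Main obstacle.} The delicate step is the regime conditions. The chain gradient lower bound is a fixed constant, so we need $\epsilon' = (\epsilon+B_1^2)^{1/2}$ to be small compared with $\sqrt{L_1\Delta}$ in order to have $T \ge 1$. The same smallness is what makes the scale $\lambda$ valid. This is exactly where the hypotheses $\epsilon < 1/4$ and $B_1 \le O(1)$ enter, after normalizing constants. Concretely, they guarantee that $\epsilon + B_1^2$ is bounded by a constant, so that the chain length is positive and $\zeta \le 1$ is admissible.

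If one prefers the bias to play an active role, there is an alternative: use an adversarial bias $b_1$ that cancels part of the true gradient, so that a $B_1$-stationary-looking oracle hides progress. However, we expect this to be unnecessary, since the zero-bias instance already attains the bound. We would remark that the bias enters only through the weakened target accuracy.
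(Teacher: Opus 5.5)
Your proposal is correct, and its core is the paper's construction. Both use the scaled Carmon et al.\ chain with a probability-$\rho$ zero-chain gradient oracle, calibrated so that $\|\nabla F\|>(\epsilon+B_1^2)^{1/2}$ until progress reaches $T\asymp \Delta L_1/(\epsilon+B_1^2)$, and then apply Lemma~\ref{lemma:lowerboundprob}.

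The difference is in how the two terms and the bias are handled. You set $b_1\equiv 0$ and get both terms from one instance via $1/\zeta\asymp\max\{1,\sigma_1^2/\epsilon'^2\}$. The paper instead does the following:
\begin{itemize}
\item It keeps a bias term inside the estimator, multiplied by $z/\rho$ like the gradient.
\item It tunes $\rho$ against the combined budget $\sigma_1^2+B_1^2$, bounding only $\mathbb{E}\|\tilde\nabla F-\nabla F\|^2$.
\item It proves the two terms from separate instances ($\sigma_1>0$ and $\sigma_1=0$) and then combines them.
\item It uses $\epsilon<\frac14$ and $B_1\le O(1)$ to simplify $1/\rho$ (via $(1-4\epsilon)B_1^2\ge 0$), to discard a $-4B_1^4$ term, and to argue $\rho=\Theta(1)$ in the noiseless case.
\end{itemize}
In the end the bias contributes nothing to the paper's bound, as you predicted. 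Your version therefore loses nothing and is cleaner. Membership in $\mathcal{O}_1(F,\sigma_1,B_1)$ is immediate: the zero-mean part alone has variance at most $\sigma_1^2$, and the $\sigma_1=0$ regime needs no randomness since $\zeta=1$. The paper's combined accounting, by contrast, does not certify the noise and bias constraints separately.

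Two small corrections.
\begin{itemize}
\item In your route, $\epsilon<\frac14$ and $B_1\le O(1)$ are not what makes $T$ large. You need $\epsilon+B_1^2\lesssim L_1\Delta$, so that $T$ exceeds the $\log(1/\delta)$ slack, and this follows from those hypotheses only if $L_1\Delta$ is normalized to be $\Omega(1)$. The same regime condition is implicit in the paper. It is genuinely necessary, since $\|\nabla F(0)\|^2\le 2L_1\Delta$ for every $F\in\mathcal{F}_1(\Delta,L_1)$.
\item To lower-bound $\sigma_1^2/\epsilon'^4$ you need the upper bound $\epsilon'^4=(\epsilon+B_1^2)^2\le 2(\epsilon^2+B_1^4)$, not the lower bound $\epsilon'^4\ge\frac12(\epsilon^2+B_1^4)$ that you wrote.
\end{itemize}
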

Given the matching upper and lower bounds in the first order setting, one natural question is whether one could observe analogous lower and upper bounds in higher order settings. In Theorem \ref{thm:higherorderlowerbound}, we derive a lower bound for finding an $O(\epsilon + B)$ stationary point using derivatives $1, \ldots, p$ for $B \le \frac{\sqrt{3}}{2} \sigma_1$. The function $F$ that we will use is a rescaling of the function $F_T$ later defined in eq.\eqref{eq:hardfunction}. In Lemma \ref{lemma:hardfunctionproperties}, we prove that for all $x, y \in \mathbb{R}^d$, there exists a constant $C \ge 0$ such that $||\nabla F_T(x)|| \le C \sqrt{T}$. Since we scale this function as the following $F_T^*(x) = \alpha F_T(\beta x)$ for constants $\alpha, \beta$, we have that $||\nabla F_T(x)|| \le C_1 \sqrt{T}$ for a different $C_1$. For $B \ge C_1 \sqrt{T} - \epsilon$, we need zero oracle queries to reach the given stationarity condition, as for any $x \in \mathbb{R}^d$, we have that $||\nabla F(x)|| \le C_1 \sqrt{T}$, so the lower bound is vacuous for this scenario. 
\begin{theorem}
    \label{thm:higherorderlowerbound}
    For all $p \ge 2$, $\Delta$, $L_{1: p}$, $\sigma_{1: p} > 0$, $\epsilon < \sqrt{\sigma_1}$, and $B \le \frac{\sqrt{3}}{2} \sigma_1$, there exists $F \in \mathcal{F}_p(\Delta, L_{1: p})$ and $(O_F^p, P_z) \in \mathcal{O}_p(F, \sigma_{1: p}, B_{1: p})$ such that for any $p^{th}$ order zero-respecting algorithm (Definition \ref{definition:zerorespectingalg}), the number of queries to obtain a point an $\epsilon + \max_{i} B_i$ stationary point with constant probability is bounded below by 
    \begin{flalign*}
        &\Omega(1) \cdot \frac{(\sigma_1^2 - 4(\epsilon + B)^2 B_1^2) \Delta}{32 (\epsilon + B)^3 \ell_0^2 \Delta_0} \cdot \\ 
    &\min_{q' \in \{1, \ldots, p \},  q \in \{2, \ldots, p \} } \min \left \{\left(\frac{\ell_0^2(\sigma_q^2 + B_q^2)}{2 \ell_{q - 1}^2(\sigma_1^2 + B_1^2 - 4(\epsilon + B)^2 B_1^2)}\right)^{\frac{1}{2(q - 1)}}, \left(\frac{\sigma_q^2 + B_q^2}{8(\epsilon + B)^2 B_q^2}\right)^{\frac{1}{2(q - 1)}}, \left(\frac{L_{q'}}{2 (\epsilon + B) \ell_{q'}}\right)^{\frac{1}{q'}} \right \} 
    \end{flalign*}
\end{theorem}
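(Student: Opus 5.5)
We will adapt the construction of \cite{SecondOrderOptNonconvexStochastic} for unbiased $p$-th order oracles. The plan is to use the scaled hard function $F_T^*(x)=\alpha F_T(\beta x)$ with $F_T$ from eq.~\eqref{eq:hardfunction}, together with a \emph{probabilistic zero-chain} oracle. Recall that $\prog_\alpha(x)$ is the largest coordinate index with $|x_i|>\alpha$. The oracle reveals the derivative information of the next coordinate $x_{\prog+1}$ only with probability $p_0$. It does this by multiplying the component of $\nabla^q F_T$ along that coordinate by $z/p_0$ with $z\sim\mathrm{Bernoulli}(p_0)$, which keeps the stochastic part unbiased.

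The new ingredient is the bias. On the event $z=0$, we let $b_q(x)$ cancel part of the true derivative along the frontier coordinate, so that the expected output carries less signal. Since $\|b_q\|\le B_q$, this cancellation is limited. We then re-balance the variance budget so that the noise term $\xi_q$ plus the bias obey $\mathbb{E}\|\xi_q\|^2\le\sigma_q^2$ and $\|b_q\|\le B_q$. This is where the combinations $\sigma_q^2+B_q^2$ and $\sigma_1^2-4(\epsilon+B)^2B_1^2$ should enter.

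\textbf{Step 1: reduce stationarity to progress.} By Lemma~\ref{lemma:hardfunctionproperties}, every $x$ with $\prog_1(x)<T$ has $\|\nabla F_T(x)\|>1$ (the large-gradient property). After scaling, $\|\nabla F_T^*(x)\|>\alpha\beta$. We choose $\alpha\beta=2(\epsilon+B)$, so no $(\epsilon+B)$-stationary point exists until the chain is fully discovered.

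\textbf{Step 2: choose $\alpha,\beta$ for the function class.} We need $\nabla^q F_T^*$ to be $L_q$-Lipschitz for each $q\le p$, i.e.\ $\alpha\beta^{q+1}\ell_q\le L_q$. We also need the suboptimality bound $\alpha\Delta_0T\le\Delta$, so $T\approx \Delta\beta/(2(\epsilon+B)\Delta_0)$. With $\beta$ as large as allowed, $T$ is as large as possible.

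\textbf{Step 3: choose $p_0$ from the variance budget.} The variance of the order-$q$ estimator is about $(1-p_0)/p_0\cdot\alpha^2\beta^{2q}\ell_{q-1}^2$, minus the portion absorbed by the bias. Requiring this to be at most $\sigma_q^2+B_q^2$ minus the bias correction gives $p_0$. The $q=1$ constraint gives $p_0\approx 4(\epsilon+B)^2\ell_0^2/(\sigma_1^2-4(\epsilon+B)^2B_1^2)$. The condition $B\le\frac{\sqrt3}{2}\sigma_1$ together with $\epsilon<\sqrt{\sigma_1}$ keeps the denominator positive and ensures $p_0<1$. The constraints for $q\ge2$ cap $\beta$, and this cap produces the first two terms in the minimum. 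The smoothness constraint from Step 2 caps $\beta$ by $(L_{q'}/(2(\epsilon+B)\ell_{q'}))^{1/q'}$, the third term.

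\textbf{Step 4: concentration.} For a zero-respecting algorithm, progress increases by at most one per query, and only with probability $p_0$. A Chernoff bound, as in \cite{SecondOrderOptNonconvexStochastic}, then shows that $\Omega(T/p_0)$ queries are required with constant probability. Substituting $T$ and $p_0$ and taking the most restrictive $\beta$ yields the stated product: the factor $\Delta(\sigma_1^2-4(\epsilon+B)^2B_1^2)/(32(\epsilon+B)^3\ell_0^2\Delta_0)$ times the minimum over $q,q'$.

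\textbf{Main obstacle.} The hardest step is Step 3 for general $q$: designing the bias so that it is admissible for every order simultaneously, with $\|b_q\|\le B_q$ in operator norm for tensors, while still not leaking the next coordinate. It must also interact correctly with the variance bound. My first attempt would be a bias proportional to the frontier slice of $\nabla^qF_T^*$ scaled by $-\min\{1,B_q/\|\cdot\|\}$. I would then bound the operator norm of that slice using the $\ell_{q-1}$ bounds on the derivatives of $F_T$ from Lemma~\ref{lemma:hardfunctionproperties}.
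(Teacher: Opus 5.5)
Your Steps 1, 2 and 4 match the paper's skeleton: $\alpha\beta=2(\epsilon+B)$, $T=\lfloor \Delta\beta/(2\Delta_0(\epsilon+B))\rfloor$, the caps $\alpha\beta^{q'+1}\ell_{q'}\le L_{q'}$, and Lemma~\ref{lemma:lowerboundprob} giving $(T-2)/(2\rho)$. The gap is Step 3, which is where all the bias-dependent terms of the stated bound come from. The formulas you give there are taken from the statement rather than derived.

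The paper designs no cancelling bias. It takes an arbitrary $b^q$ for the \emph{unscaled} $F_T$, with $b^q_i(x)=0$ for $i>\prog_{1/4}(x)+1$ and $\|b^q_i(x)\|\le B_q$, and places it \emph{inside} the Bernoulli factor:
$[\tilde\nabla^qF_T(x,z)]_i=(1+\mathbf{1}\{i>\prog_{1/4}(x)\}(z/\rho-1))(\nabla^q_iF_T(x)+b^q_i(x))$.
On $z=0$ the whole frontier slice vanishes, bias included. So the zero-chain property is immediate, and your ``main obstacle'' never arises.

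After rescaling, the bias is multiplied by $\alpha\beta^q$ and charged against a combined budget:
$\alpha^2\beta^{2q}\bigl(\ell_{q-1}^2(1-\rho)/\rho+B_q^2\bigr)\le\sigma_q^2+B_q^2$.
With $\alpha\beta=2(\epsilon+B)$:
\begin{itemize}
\item The case $q=1$ gives exactly $\rho=4(\epsilon+B)^2\ell_0^2/(\sigma_1^2+B_1^2-4(\epsilon+B)^2B_1^2)$.
\item For $q\ge 2$, splitting the budget in halves gives the two $\beta$-caps. The second comes from $4(\epsilon+B)^2\beta^{2(q-1)}B_q^2\le(\sigma_q^2+B_q^2)/2$.
\end{itemize}
So the factors $4(\epsilon+B)^2B_1^2$ and $8(\epsilon+B)^2B_q^2$ appear only because the scaled bias sits on the \emph{cost} side of this inequality.

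Your mechanism cannot produce these expressions, for two reasons.
\begin{itemize}
\item A bias that acts ``on the event $z=0$'' is not a bias in $\mathcal{O}_p$. There $b_q(x)$ is $z$-independent, and everything that depends on $z$ is the mean-zero noise $\xi_q$.
\item Suppose you repair it into a deterministic cancelling bias of norm at most $B_q$ on the \emph{scaled} function. It then only shrinks the frontier component, and hence the noise. Admissibility reduces to something like $\alpha^2\beta^{2q}\ell_{q-1}^2(1-\rho)/\rho\le\sigma_q^2$ together with $\|b_q\|\le B_q$.
\end{itemize}
In that repaired version the bias is purely a resource for the adversary and never a cost. Hence the $-4(\epsilon+B)^2B_1^2$ in $\rho$ and the cap $\bigl(\frac{\sigma_q^2+B_q^2}{8(\epsilon+B)^2B_q^2}\bigr)^{1/(2(q-1))}$ have no source in your argument. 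Your route leads to a bound of a different form, not the stated one.

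Finally, $B\le\frac{\sqrt3}{2}\sigma_1$ and $\epsilon<\sqrt{\sigma_1}$ do not make $\sigma_1^2-4(\epsilon+B)^2B_1^2$ positive; take $\sigma_1=1$, $B=B_1=0.8$, $\epsilon=0.5$. They also do not force $p_0<1$; the paper avoids needing that by setting $\rho=\min\{\cdot,1\}$.
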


\begin{proof}[Proof  Sketch] Here, we provide a brief proof sketch, deferring the full proofs to Appendix \ref{appendix: lower_bound_proofs}. We outline the proof of Theorem \ref{thm:higherorderlowerbound}, as the proof of Theorem \ref{thm:firstorderlowerbound} will be a similar argument to this one. We first introduce the following ``hard'' function~\cite{LowerBoundsI} where for a fixed $T \ge 0$ and $x \in \mathbb{R}^T$: \begin{flalign}
        \label{eq:hardfunction}
        &F_T(x) = -\Psi(1) \Phi(1) + \sum_{i = 2}^T [\Psi(-x_{i - 1}) \Phi(-x_i) - \Psi(x_{i - 1}) \Phi(x_i)]
    \end{flalign} 
    where \begin{flalign*}
    &\Psi(x) = \begin{cases}
    0, & \text{if } x \le \frac{1}{2} \\
    \exp(1 - \frac{1}{(2x - 1)^2}),   & \text{if } x > \frac{1}{2}, 
\end{cases}, \Phi(x) = \sqrt{e} \int_{-\infty}^x e^{-\frac{1}{2}t^2} dt,
\end{flalign*} and we let $\ell_p$ represent the Lipschitz constant of the $p^{th}$ derivative of $F_T$. Inspired by~\cite{SecondOrderOptNonconvexStochastic}, we then show how to construct the following series of derivative estimators for this function (for all derivatives $i \in \{1, \ldots, p \}$): \begin{flalign*}
    &[\tilde{\nabla}^qF_T(x, z, b)]_i = (1 + \boldsymbol{1}\{i > \prog_{\frac{1}{4}}(x) \}(\frac{z}{\rho} - 1)) \cdot (\nabla_i^q F_T(x) + b_i^q(x))
\end{flalign*} which crucially account for the fact that the oracles are not only stochastic \emph{but also biased}. If $T$ is a $k$-dimensional tensor, then $T_i$ is a $k - 1$-dimensional subtensor where $[T_i]_{j_1, \ldots, j_{k - 1}} = T_{i, j_1, \ldots, j_k}$ and $\prog_\alpha(x) = \max \{i \ge 0, |x_i| > \alpha \}$, representing the highest index of $x \in \mathbb{R}^d$ that is at least $\alpha$ away from zero. We carefully devised the above construction such that this collection of derivative estimators forms a probability-$\rho$ zero-chain (for some $0 \le \rho \le 1$) where the following properties hold: \begin{flalign*}
    &\Pr(\exists x | \prog(\tilde{\nabla}^1 F(x, z, b), \ldots, \tilde{\nabla}^pF(x, z, b)) = \prog_{\frac{1}{4}}(x) + 1) \le \rho \\
    &\Pr(\exists x | \prog(\tilde{\nabla}^1 F(x, z, b), \ldots, \tilde{\nabla}^pF(x, z, b)) = \prog_{\frac{1}{4}}(x) + i) = 0
\end{flalign*} for all $i > 1$. Through this zero-chain construction (as is well studied in \cite{adil2026convex, SecondOrderOptNonconvexStochastic,LowerBoundsI, LowerBoundsII, lowerboundsnonconvexstochastic}), we enforce that every oracle query can reveal information about at most one new coordinate, thereby requiring any algorithm to make sequential progress, which yields a lower bound on the number of queries to make sufficient progress. (See Appendix \ref{appendix: lower_bound_proofs} for a more formal explanation.) Given that $\{\tilde{\nabla}^q F_T(x, z, b) \}$ form a probability-$\rho$ zero-chain, we used a scaled version of $F_T$, parametrized by two constants $\alpha$ and $\beta$: \begin{flalign*}
    &F_T^*(x) = \alpha F_T(\beta x)
\end{flalign*} where we solve for $\alpha$ and $\beta$ to enforce the required suboptimality, noise, bias, and higher order Lipschitz conditions. 
\end{proof}
\section{Upper Bounds}
\label{section: upperbounds}
\subsection{Minibatch-Based Derivative Estimation}
Given these lower bounds, we develop various algorithms to show how the upper bounds compare. In Algorithm \ref{alg:novariancereduction}, we minimize a regularized Taylor model of our objective $F$ at each iteration. In Theorem \ref{thm:upperbounddeterministic}, we prove an upper bound for the oracle complexity of Algorithm \ref{alg:novariancereduction} for reaching an $O(\epsilon + B)$ stationary point. 

\begin{theorem}
    \label{thm:upperbounddeterministic}
    For any function $F \in \mathcal{F}_p(\Delta, L_{1:p})$, where $p \ge 2$, $\epsilon > 0$, with biased and stochastic $p^{th}$-order oracles in $\mathcal{O}(F, \sigma_{1:p}, B_{1:p})$ where $B \ge  \Omega(\epsilon^{\frac{3p}{3p + 1}})$, with probability at least $\frac{5}{8}$, Algorithm \ref{alg:pth_deterministic} returns a point $\hat{x}$ such that $||\nabla F(\hat{x})|| \le O(\epsilon + B)$ and performs at most \[
        \begin{cases} 
              O\left(\frac{\Delta (\max_i \sigma_i)^2 }{\epsilon^3 (B + 1)^{\frac{p + 1}{p}}} + \frac{(\epsilon + B)^{\frac{p + 1}{p}}(\max_i \sigma_i)^2}{\epsilon^3 (B + 1)^{\frac{p + 1}{p}}}\right) & \Omega(\epsilon^{\frac{3p}{3p + 1}}) \le B \le \Theta(1) \\
              O\left(\frac{\Delta (\epsilon + B)^{\frac{p + 1}{p}}(\max_i \sigma_i)^2}{\epsilon^3 (B + 1)^{\frac{p + 1}{p}}}\right) & B \ge \omega(1) \\
        \end{cases}
        \]
     queries to the stochastic and biased derivative oracles. 
\end{theorem}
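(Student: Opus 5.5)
We analyze Algorithm \ref{alg:novariancereduction} as an inexact $p$-th order regularized Taylor method (in the style of \cite{birgin2017worst}). At iterate $x_t$, each $D^i_t$ is the average of $n_i$ oracle calls, so $D^i_t = \nabla^i F(x_t) + \bar\xi_{i,t} + \bar b_{i,t}$ with $\|\bar b_{i,t}\|\le B_i\le B$ and $\mathbb{E}\|\bar\xi_{i,t}\|^2 \le \sigma_i^2/n_i$. The next point $x_{t+1}=x_t+s_t$ is an (approximate) minimizer of
\[
m_t(s)=F(x_t)+\sum_{i=1}^p \tfrac{1}{i!}D^i_t[s]^i + \tfrac{M}{(p+1)!}\|s\|^{p+1},
\]
with $M$ a constant multiple of $L_p$.

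\textbf{Per-step inequalities.} I will prove two facts. First, a decrease inequality: Lipschitzness of $\nabla^p F$ gives $F(x_t+s)\le T_{p}(x_t;s)+\frac{L_p}{(p+1)!}\|s\|^{p+1}$. Replacing the exact Taylor terms by $D^i_t$ costs at most $\sum_i \frac{1}{i!}(\|\bar\xi_{i,t}\|+B)\|s_t\|^i$. Using $m_t(s_t)\le m_t(0)$ and choosing $M$ somewhat larger than $L_p$, this yields
\[
F(x_{t+1})\le F(x_t) - c\,M\|s_t\|^{p+1} + \sum_i \tfrac{1}{i!}(\|\bar\xi_{i,t}\|+B)\|s_t\|^i .
\]
Second, a gradient bound: first-order optimality of $m_t$ plus the Lipschitz Taylor remainder for $\nabla F$ give
\[
\|\nabla F(x_{t+1})\|\le C\Big(M\|s_t\|^p + \sum_i(\|\bar\xi_{i,t}\|+B)\|s_t\|^{i-1}\Big).
\]
Consequently, if $\|\nabla F(x_{t+1})\|>K(\epsilon+B)$ for a large constant $K$, then either the step is long, $\|s_t\|\gtrsim((\epsilon+B)/M)^{1/p}$, or the error terms are large.

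\textbf{Batch sizes and bias.} I pick $n_i$ so that $\sqrt{\sigma_i^2/n_i}\lesssim \epsilon\, r^{-(i-1)}$, where $r=((\epsilon+B)/M)^{1/p}$ is the target step length. This gives $n_i\propto \sigma_i^2 r^{2(i-1)}/\epsilon^2$. With these choices the noise contributes $O(\epsilon)$ to the gradient bound, and the bias contributes $O(B\sum_i r^{i-1})$. The latter is $O(B)$ when $r\lesssim1$, i.e.\ $B\le\Theta(1)$; when $B=\omega(1)$ the step can be clipped or $M$ rescaled so that $r\lesssim 1$. In the decrease inequality, each non-stationary step then decreases $F$ by roughly $M r^{p+1}-(\epsilon+B)r$. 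Making this positive of order $(\epsilon+B)^{(p+1)/p}$ is where the lower threshold on $B$ comes from. The worry is that the bias term $Br$ can cancel the decrease; I would enforce positivity by taking $M$ large relative to $B$, and check that this forces $B\gtrsim\epsilon^{3p/(3p+1)}$ when equated with the $\epsilon^{-3}$ sample scaling.

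\textbf{Counting and main obstacle.} Telescoping the expected decreases gives at most $T\lesssim \Delta/(\epsilon+B)^{(p+1)/p}$ non-stationary iterations, up to the normalizing factor $(B+1)^{(p+1)/p}$ arising from the $M$ choice. Markov's inequality on the noise terms, together with a union over a constant fraction of iterates, gives success probability $5/8$ when outputting a uniformly random iterate, or the one with the smallest model gradient. The total query count is $T\sum_i n_i$. This matches the stated $\frac{\Delta(\max_i\sigma_i)^2}{\epsilon^3(B+1)^{(p+1)/p}}$ term, and the additive term comes from a final or initial batch of size $\propto(\epsilon+B)^{(p+1)/p}\sigma^2/\epsilon^3$ to certify the output. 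I expect the main obstacle to be the bookkeeping of the bias: it enters every order $i$ multiplied by $\|s\|^{i-1}$. Getting a clean $O(\epsilon+B)$ guarantee, and the precise exponents in both regimes, requires choosing $M$ as a function of $B$ (roughly $M\propto L_p(B+1)$) and verifying consistency of that choice. I would first work this out for $p=2$ (cubic regularization) and then generalize.
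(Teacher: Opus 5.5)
Your skeleton (a decrease inequality, a gradient bound at the new point, telescoping, a uniformly random output) matches the paper's: Lemma~\ref{lemma:diffxandy}, the indicator lemma, and Lemma~\ref{lemma:gradprobbound}. The quantitative part, however, does not go through.

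\textbf{Batch-size calibration.} You set the gradient-noise tolerance to $\epsilon$, i.e.\ $n_1\propto\sigma_1^2/\epsilon^2$. With $T\approx\Delta/(\epsilon+B)^{(p+1)/p}$ this costs about $\Delta\sigma^2\epsilon^{-2}(\epsilon+B)^{-(p+1)/p}$ queries. That is $O(\Delta\sigma^2\epsilon^{-3})$ only if $\epsilon+B\gtrsim\epsilon^{p/(p+1)}$. The theorem also covers the window $\epsilon^{3p/(3p+1)}\lesssim B\ll\epsilon^{p/(p+1)}$, and there your count is strictly worse: for $p=2$ and $B=\epsilon^{6/7}$ it gives $\epsilon^{-23/7}$. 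So your claim that it ``matches the stated term'' is false. In that window your $n_1$ also exceeds, up to constants, the cap $n_i\le\frac{(\epsilon+B)^{(p+1)/p}(\max_i\sigma_i)^2}{\epsilon^3(1+B)^{(p+1)/p}}$ built into Algorithm~\ref{alg:novariancereduction}.

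The missing observation is that the target is only $O(\epsilon+B)$, so the noise need only be below $\eta^p\approx\epsilon+B$. In the paper the noise enters the failure probability as $\frac{A}{\eta^{p+1}}(C_1\sigma_1^2/n_1)^{\frac{p+1}{2p}}$. Hence $n_1\gtrsim\sigma_1^2/(\epsilon+B)^2$ suffices, and $n_i\gtrsim\sigma_i^2/(\epsilon+B)^{2(p-1)/p}$ for $i\ge 2$. The hypothesis $B\gtrsim\epsilon^{3p/(3p+1)}$ is exactly the condition $(\epsilon+B)^{(3p+1)/p}\gtrsim\epsilon^3(1+B)^{(p+1)/p}$ that this lower end sits below the cap. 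Equivalently, for $B\le O(1)$, it says $T\sigma^2/(\epsilon+B)^2\lesssim\Delta\sigma^2/\epsilon^3$. It does not come from positivity of the per-step decrease. The stated count is then simply $T$ times the cap, and the additive term is the $+1$ from the ceiling in $T$, not a certification batch.

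\textbf{The unconstrained step.} You take an unconstrained model minimizer, and your bookkeeping replaces $\|s_t\|$ by the target length $r$. Nothing then bounds the actual step, and this breaks the expectation argument. Telescoping must absorb iterations where the noise is large. Maximizing the right-hand side of your decrease inequality over $\|s_t\|$ allows an increase of order $\|\bar\xi_{p,t}\|^{p+1}/M^p$, a $(p+1)$-th moment that a variance bound does not control. Markov's inequality over ``a constant fraction of iterates'' does not help, because a single bad step can undo the whole telescoping sum.

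This is why Algorithm~\ref{alg:novariancereduction} imposes the trust region $\|y-x^{(t)}\|\le\eta<1$. It bounds every $\|y-x\|^{i-1}$ by $\eta^{i-1}<1$, so after Young's inequality only terms $\|\nabla^iF(x)-D^{(i)}\|^{(p+1)/p}$ remain. Since $(p+1)/p\le 2$, these are controlled by Lyapunov's inequality and Lemma~\ref{lemma:constantsC_i}.

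\textbf{The cancellation you worry about.} In your parameterization $Mr^{p+1}=(\epsilon+B)r$ identically, so enlarging $M$ cannot fix it. What is needed is a sufficiently large constant $K$ in the non-stationarity threshold $K(\epsilon+B)$, relative to the error constants. For comparison, the paper's threshold is $\frac{9M}{8p!}\eta^p$ with $\eta$ fixed independently of $M$.

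\textbf{Scope.} The theorem concerns Algorithm~\ref{alg:novariancereduction} with $M=8L_p$. Choosing $M\propto L_p(B+1)$, or adding a certification batch, analyzes a different method.
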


Here, the fact that 
we require $B \ge \Omega(\epsilon^{\frac{3p}{3p + 1}})$ is due to requirements of the algorithm in terms of batch size and that the existence of constants $\{C_i \}$ is guaranteed by Lemma \ref{lemma:constantsC_i}. Note in the unbiased case (i.e $B = 0$), we recover the $O(\epsilon^{-3})$ guarantee known from \cite{SecondOrderOptNonconvexStochastic}. 
Moreover, as $p \rightarrow \infty$, the oracle complexity worsens, thereby implying that using derivative information beyond the second order does not help in this scenario. Therefore, in the minibatch derivative estimation setting, setting $p = 2$ yields optimal oracle complexity as compared to any $p > 2$. Interestingly, however, when using variance reduction, we do realize benefits to appealing to higher order derivative information for certain ranges of bias.

\input{higher-order-no-variance-reduction}

\begin{proof}[Proof Sketch]
    Here, we provide a brief proof sketch of Theorem \ref{thm:upperbounddeterministic}, deferring the full proof to Appendix \ref{appendix_a:minibatch_deriv_proof}. In Lemma \ref{lemma:constantsC_i}, we prove that there do exist constants $C_i$ that satisfy the condition on line 1 of Algorithm \ref{alg:novariancereduction}. We then derive a lower bound for $F(x) - F(y)$ in Lemma \ref{lemma:diffxandy} for all $M \ge 8L_p$ and $0 \le \eta < 1$, where $x \in \mathbb{R}^d$ and $y \in \argmin_{z: ||z - x|| \le \eta} m_x(z)$, where $m_x$ represents the regularized $p^{th}$ order model of $F$ around $x$ using biased and stochastic derivatives: \begin{flalign*}
        &m_x(y) = F(x) + \sum_{i = 1}^p \frac{1}{i!} D^{(i)}[y - x]^i + \frac{M}{(p + 1)!} ||y - x||^{p + 1}
    \end{flalign*} We then extend this Lemma to the case where $D^{(i)}$ are random variables in Lemma \ref{lemma:randomvarderivatives}, and then (through using an intermediary Lemma \ref{lemma:gradprobbound})) prove an upper bound of $\frac{3}{8}$ on the probability of reaching a point $\hat{x}$ such that $\Pr(||\nabla F(\hat{x})|| \ge \frac{9M}{8p!} (\epsilon + B))$, which completes the proof. 
\end{proof}
Below, we also provide a comparison to the lower bound in Theorem \ref{thm:higherorderlowerbound} in terms of the oracle complexity.

\begin{table}[h]
\centering
\begin{tabular}{ccc}
\toprule Bias Regime & Lower Bound (Theorem \ref{thm:higherorderlowerbound}) & Upper Bound (Theorem \ref{thm:upperbounddeterministic}) \\
\midrule
$B \le O(\epsilon)$ & $\Omega(\epsilon^{-3})$ & $O(\epsilon^{-3})$ \\
$B = \Theta(\epsilon^{q}), 0 \le q < 1$ & $\Omega(\epsilon^{-3q})$ & $O(\epsilon^{-3})$ \\
$B \ge \omega(1)$ & Trivial & $O(\epsilon^{-3})$ \\
\bottomrule
\end{tabular}
\end{table} This demonstrates that the lower bound in Theorem \ref{thm:higherorderlowerbound}
is tight for $B \le O(\epsilon)$, which represents the low bias regime. For the high bias regime, where $B \ge \omega(1)$, we note that if one knew that $B$ could be expressed in this form, the one could directly return the starting iterate $x^{(0)}$. Therefore, in theory, one could match the lower bound for this setting as well. We do not assume such knowledge in our algorithm. We work on improving the upper bound for the intermediate bias setting through a variance reduction scheme (Algorithm \ref{alg:var-reduction-pth-order}), and succeed in improving upon the upper bound for the $B = \Theta(1)$ scenario to $O(\epsilon^{-2})$, and leave further improvements for future work. 
 
\subsection{Variance Reduction}
Here, we investigate the potential advantages of using variance reduction in a biased setting. Many previous works have primarily relied on recursive variance reduction (e.g. \cite{spider}) to compute cheap estimators of the gradient $\nabla F(x^{(t)})$. In our implementation of recursive variance reduction, we build on that of \cite{SecondOrderOptNonconvexStochastic} by estimating $\nabla^i F(x^{(t)}) - \nabla^i F(x^{(t + 1)})$ by averaging $\nabla^{i + 1}F$-vector products for all $i \in [p]$, instead of just doing this with the gradient. To derive this estimator, we first note that for all $i$, it holds that (by the Fundamental Theorem of Calculus) for all $x, x'$: $\nabla^iF(x) - \nabla^i F(x') = \int_{0}^1 \nabla^{i + 1} F(xt + x'(1 - t))(x - x') dt$. Now, to approximate this integral, we construct the following estimator for $\nabla^i F$,  where $K$ is chosen to be proportional to $||x - x'||^2$: \begin{flalign*}
    &\tilde{\nabla}^i F = \frac{1}{K} \sum_{k = 0}^{K - 1} \tilde{\nabla}^{i + 1} F(x \cdot (1 - \frac{k}{K}) + x'\cdot \frac{k}{K}, z^{(i)}, b_i)(x - x')
\end{flalign*} 
We reset the derivative estimators according to a defined probability metric $b$ and dynamically set the batch size proportional to the difference between the current iterate and the previous iterate squared and incorporate this recursive variance reduction approach for all $p$ derivatives.
 In Theorem \ref{thm:rvr-upperbound}, we analyze the oracle complexity of a variance-reduction based algorithm (Algorithm \ref{alg:var-reduction-pth-order}) for finding an $O(\epsilon + B)$ stationary point when $B = \Theta(1)$ and a $O((\epsilon^2 + B^2)^{\frac{1}{2}}(\epsilon + B))$ stationary point when $B \ge \omega(1)$. We note that for when $B \le O(\epsilon)$, algorithm \ref{alg:novariancereduction} is already optimal, so we focus on these intermediate and high bias regimes below. 
\begin{algorithm}[H]
\caption{\textbf{H}igher-\textbf{O}rder \textbf{R}ecursive \textbf{V}ariance \textbf{R}eduction  \textbf{(HO-RVR)}}
    \label{alg:rvr_pth_order}
\label{alg:sgd}
\begin{algorithmic}[1]
\Require Precision parameter $\epsilon$, probability $b$, current iterate $x$, previous iterate $x_{\prev}$, derivative order $i$, derivative estimate with respect to $x_{\prev}$, $D_{\prev}^i$, Biased and stochastic oracle $(O_F^p, P_z) \in \mathcal{O}_p(F, \sigma_{1: p}, B_{1: p})$ for $F \in \mathcal{F}_p(\Delta, L_{1: p})$
\vspace{0.25em}
\State Set 
            $K = \left \lceil \frac{5(\sigma_{i + 1}^2 + L_{i + 1} \epsilon)}{b \epsilon^2} \cdot ||x - x_{\prev}||^2\right \rceil$

\State Set $n = \left \lceil  \frac{5 \sigma_i^2}{\epsilon^2}\right \rceil$
\State Sample $C \sim $ Bernoulli$(b)$. 
\If{C is 1 \textbf{or} $D_{\prev}^i$ is None}
            \State Query the $i^{th}$ order oracle $n$ times at $x$ and set 
                $D^{(i)} = \frac{1}{n } \sum_{j = 1}^n \tilde{\nabla}^i F(x, z^{(j)}, b_i), \hspace{0.5em} z^{(j)} \sim P_z$

    \Else
        \State For $k \in \{0, \ldots, K \}$, set 
            $x^{(k)} = \frac{k}{K} x + (1 - \frac{k}{K})x_{\prev}$

        \State Query the $i^{th}$ order oracle at the points $\{x^{(k)} \}_{k = 0}^{K - 1}$ and set \begin{align*}
            &D^{(i)} = D^{(i)}_{\prev} + \sum_{k = 1}^K \tilde{\nabla}^{i + 1} F(x^{(k - 1)}, z^{(k)}, b_{i + 1}), \hspace{0.5em} z^{(k)} \sim P_z
        \end{align*}
\EndIf
\State \Return $D^{(i)}$
\end{algorithmic}
\end{algorithm}

\begin{algorithm}[H]
\caption{\textbf{H}igher-\textbf{O}rder \textbf{R}ecursive \textbf{V}ariance  \textbf{R}eduction \textbf{D}erivative Estimation
\textbf{(HO-RVR-D)}}\label{alg:var-reduction-pth-order}
\begin{algorithmic}[1]
    \renewcommand{\algorithmicrequire}{\textbf{Input:}} \Require Precision parameter $\epsilon$, Biased and stochastic oracle $(O_F^p, P_z) \in \mathcal{O}_p(F, \sigma_{1: p}, B_{1: p})$ for $F \in \mathcal{F}_p(\Delta, L_{1: p})$, derivative order $p$ .
\vspace{0.5em}
\State Pick $b$ such that $0 < b \le 1$, let $B = \max_i B_i$ 
\State Let $X = 4B^2 + \frac{96}{5}\epsilon^2 +18B^2 \epsilon^{\frac{2}{p}} + 18B^{\frac{2}{p}} $
\State Set $\eta = \min\{(\epsilon + B)^{\frac{1}{p}}, 1 - \epsilon \}$
\State Let $A = \max\{16(p + 1)!, 2(p + 1)! \cdot [(p!)^{\frac{p + 1}{p}} + 8 \cdot (2p!)^{\frac{1}{p}}], 2(p + 1)! \cdot ((p!)^{\frac{p + 1}{p}} + 4(2p \cdot p!)^{\frac{1}{p}}) \}$
\State Set $M = \max \{(\frac{8A X^{\frac{p + 1}{2p}}}{\eta^{p + 1}})^{\frac{p}{p + 1}}, (\frac{8Ap \cdot X^{\frac{p + 1}{2p}}}{\eta^{\frac{p^2 - 1}{p}}})^{\frac{p}{p + 1}}, (\epsilon + B)^{\frac{-p - 2}{p}}, 8 L_p) \} $
\State Set $T = \lceil \frac{8A \Delta}{M \eta^{p + 1}} \rceil $
\State Set $x^{(0)} = x^{(1)} = 0$, $D^{(i)} = $ None for $i \in \{1, \ldots, p \}$
\vspace{0.25em}
\For{$t = 1$ to $T $}
\vspace{0.25em}
\State $D_t^{(i)} = $ \textbf{HO-RVR}($\epsilon$, $b$, $x^{(t)}$, $x^{(t - 1)}$, $D_{t - 1}^{(i)})$
\State Set the next point $x^{(t + 1)}$ as \begin{flalign*}
            &x^{(t + 1)} = \argmin_{y: ||y - x^{(t)}|| \le \eta} \sum_{i = 1}^p \frac{1}{i!} D_t^{(i)}[y - x^{(t)}]^i + \frac{M}{(p + 1)!} ||y - x^{(t)}||^{p + 1}
        \end{flalign*}

\EndFor

\renewcommand{\algorithmicrequire}{\textbf{Output:}} 
\State \Return $\hat{x}$ chosen uniformly at random from $\{x^{(t)} \}_{t = 2}^{T + 1}$
\end{algorithmic}
\end{algorithm}

\begin{theorem}
    \label{thm:rvr-upperbound}
    For any function $F \in \mathcal{F}_p(\Delta, L_{1: p})$, with biased and stochastic $p^{th}$ order oracles in $\mathcal{O}(F, \sigma_{1: p}, B_{1: p})$, with probability at least $\frac{5}{8}$, Algorithm \ref{alg:var-reduction-pth-order} returns a point $\hat{x}$ such that:
    \begin{itemize}
        \item If $B = \Theta(1)$, then $||\nabla F(\hat{x})|| \le O(\epsilon + B)$ with at most \begin{flalign*}
            O\left(\frac{\Delta (\max_i \sigma_i)^2(\epsilon + B)^{\frac{1}{p}} + (\max_i \sigma_i)^2}{\epsilon^2} + \frac{\Delta (\epsilon + B)^{\frac{1}{p}} + 1}{\epsilon}\right)
        \end{flalign*} queries to the stochastic and biased derivative oracles.
        \vspace{0.25em}
        \item If $B \ge \omega(1)$, then $||\nabla F(\hat{x})|| \le O((\epsilon^2 + B^2)^{\frac{1}{2}}(\epsilon + B))$ with at most \begin{flalign*}
            &O\left(\frac{(\max_i \sigma_i)^2}{\epsilon^2(\epsilon + B)^{\frac{p + 1}{p}}} + \frac{1}{\epsilon(\epsilon + B)^{\frac{p + 1}{p}}}\right)
        \end{flalign*} queries to the stochastic and biased derivative oracles. 
    \end{itemize}
\end{theorem}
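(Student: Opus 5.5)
The plan follows the template used for Theorem~\ref{thm:upperbounddeterministic}. First, a one-step descent inequality for the constrained regularized Taylor step. Then control of the estimator errors produced by HO-RVR. Finally, telescoping over $T$ iterations, averaging, and applying Markov's inequality to the uniformly random output.

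\textbf{Step 1: one-step descent.} For $M \ge 8L_p$ and step radius $\eta<1$, take the model minimizer $y$. I would invoke the analogues of Lemma~\ref{lemma:diffxandy} and Lemma~\ref{lemma:randomvarderivatives} to get
\[
F(x^{(t)})-F(x^{(t+1)}) \ge \frac{M}{A}\,\|x^{(t+1)}-x^{(t)}\|^{p+1} - \sum_{i=1}^p c_i\,\|D_t^{(i)}-\nabla^i F(x^{(t)})\|^{\frac{p+1}{p+1-i}}\,M^{-\frac{i}{p+1-i}} .
\]
The error terms come from Young's inequality applied to $\frac{1}{i!}(D_t^{(i)}-\nabla^iF)[y-x]^i$. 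I would also use a gradient bound at the new point. If the trust-region constraint is inactive, the first-order optimality condition of the model gives
\[
\|\nabla F(x^{(t+1)})\| \lesssim M\|x^{(t+1)}-x^{(t)}\|^p + \sum_i \|D_t^{(i)}-\nabla^iF(x^{(t)})\|\,\eta^{i-1}.
\]
If the constraint is active, the step has length $\eta$, so it contributes at least $M\eta^{p+1}/A$ of decrease.

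\textbf{Step 2: estimator error.} Each error splits into a bias part and a noise part. The bias part is at most $B$ on reset steps. On recursive steps it accumulates as $B_{i+1}\|x^{(t)}-x^{(t-1)}\|\le B\eta$. Because the reset probability is $b$, the expected number of accumulated steps is $1/b$, and $\eta\le(\epsilon+B)^{1/p}$. This is exactly where the terms $B^2\epsilon^{2/p}$ and $B^{2/p}$ in $X$ come from.

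For the noise part I would use the martingale argument of \cite{SecondOrderOptNonconvexStochastic}. A reset contributes variance $\sigma_i^2/n\le\epsilon^2/5$. Each recursive step contributes $(\sigma_{i+1}^2+L_{i+1}\epsilon)\|x-x_{\prev}\|^2/K\le b\epsilon^2/5$, where the $L_{i+1}$ term covers the Riemann-sum discretization error. Summing over the geometric number of steps since the last reset gives
\[
\mathbb{E}\|D_t^{(i)}-\nabla^iF(x^{(t)})\|^2\le X ,
\]
with $X$ as in Algorithm~\ref{alg:var-reduction-pth-order}.

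\textbf{Step 3: telescoping and Markov.} Summing Step~1 over $t$ gives
\[
\sum_t \mathbb{E}\|x^{(t+1)}-x^{(t)}\|^{p+1}\le \frac{A\,\big(\Delta+T\cdot\mathrm{err}(X,M)\big)}{M}.
\]
The choice of $M$ is designed so that $\mathrm{err}(X,M)\le M\eta^{p+1}/(8A)$, which is where $X^{(p+1)/(2p)}$ appears in $M$. With $T=\lceil 8A\Delta/(M\eta^{p+1})\rceil$, this forces the average of $\|x^{(t+1)}-x^{(t)}\|^{p+1}$ to be $O(\eta^{p+1})$, and in particular rules out a constant fraction of active-constraint steps. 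Plugging into the gradient bound, the averaged $\mathbb{E}\|\nabla F(\hat x)\|$ is $O(M\eta^p+\sqrt X)$. When $B=\Theta(1)$, this is $O(\epsilon+B)$. When $B=\omega(1)$, the $\sqrt X\cdot(\epsilon+B)$-type scaling gives the weaker $O((\epsilon^2+B^2)^{1/2}(\epsilon+B))$ bound. Markov's inequality then yields the claim with probability $5/8$.

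\textbf{Step 4: query count.} The expected queries per iteration are
\[
b\,n+(1-b)\,\mathbb{E}K \lesssim \frac{b\sigma^2}{\epsilon^2}+\frac{(\sigma^2+L\epsilon)\,\mathbb{E}\|\Delta x\|^2}{b\,\epsilon^2}+1 .
\]
Summing over $T$ iterations and bounding $\sum_t\|\Delta x\|^2$ by Hölder from the Step~3 control of $\sum_t\|\Delta x\|^{p+1}$ gives the stated counts. The additive $\sigma^2/\epsilon^2$ comes from the initial full batch, and the $1/\epsilon$ terms come from the ceiling in $K$ and from the $L\epsilon$ part. Both regime bounds follow once $b$ is chosen on the order of $\eta^2$, balancing the reset and recursive costs.

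\textbf{Main obstacle.} I expect Step~2, specifically the bias accumulation in the recursive estimator, to be hardest. Unlike noise, bias does not average out. It grows with the number of recursive steps, so it has to be controlled in expectation over the geometric reset time and kept consistent with the trust-region radius. Getting the exact constants in $X$ and $A$ to close this loop is the delicate part. I would first bound the bias deterministically by $B(1+\eta/b)$ and only then optimize over $b$.
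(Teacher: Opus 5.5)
Your architecture matches the paper's: one-step descent for the trust-region step, second-moment control of the HO-RVR error, telescoping with the stated $M$ and $T$, Markov on the uniformly drawn output, then $T$ times the per-iteration cost. Two of your ingredients are sounder than the paper's. First, the active/inactive case split is what actually justifies the lemma bounding $\mathbf{1}[\|\nabla F(y)\|\ge\frac{9M}{8p!}\eta^p]$; the paper applies the unconstrained first-order condition to a constrained minimizer. Second, charging $B\eta$ of bias per recursive step and controlling it through $b$ is honest, whereas Lemma~\ref{lemma:biasderivativediff} uses $B_{i+1}\eta\le B_{i+1}b(\epsilon+B)^{1/p}$, which is unjustified when $b<1$. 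Since $\eta=\Theta(1)$ in both regimes, you can simply fix $b=\Theta(1)$.

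The genuine gap is your claim that both query counts follow. The additive $(\max_i\sigma_i)^2/\epsilon^2$ you attribute to the first iteration cannot be avoided. At $t=1$ every $D^{(i)}_{t-1}$ is None, so the algorithm deterministically makes $\lceil 5\sigma_i^2/\epsilon^2\rceil$ queries per order, and $T\ge 1$. For $\max_i\sigma_i>0$ and $B=\omega(1)$ this is not $O\big((\max_i\sigma_i)^2/(\epsilon^2(\epsilon+B)^{\frac{p+1}{p}})\big)$, so the second bullet's count cannot hold for this algorithm under any argument. In that regime your analysis actually gives $\eta=1-\epsilon$, $X=\Theta(B^2)$ and $M=\Theta(B)$. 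Hence $T=O(1+\Delta/B)$, and the count is $O\big((1+\Delta/B)((\max_i\sigma_i)^2/\epsilon^2+1/\epsilon)\big)$. The paper reaches the stated count only by substituting $\eta^{p+1}=(\epsilon+B)^{\frac{p+1}{p}}$ into $T$ and dropping the $+1$ of the ceiling. That substitution contradicts the paper's own remark, earlier in the same proof, that $\eta=1-\epsilon$ in this regime. The gradient half of that bullet is fine: you get $O(M\eta^p+\sqrt{X})=O(B)$, which implies the stated weaker bound.

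A second, fixable problem is in Step 1. Your descent inequality uses the unconstrained Young exponents $\frac{p+1}{p+1-i}$, which exceed $2$ once $i>\frac{p+1}{2}$; for $i=p$ you would need $\mathbb{E}\|D^{(p)}_t-\nabla^pF(x^{(t)})\|^{p+1}$. Step 2 controls only second moments, and the oracle class only assumes $\mathbb{E}\|\xi_q\|^2\le\sigma_q^2$. These higher moments therefore need not be finite, and Step 3 cannot absorb them into $X^{\frac{p+1}{2p}}$ as you claim. Use the trust region first: $\|y-x\|^i\le\eta^{i-1}\|y-x\|$ with $\eta<1$. Then every error term enters with exponent $\frac{p+1}{p}\le 2$, and Jensen gives $\mathbb{E}\|D^{(i)}_t-\nabla^iF(x^{(t)})\|^{\frac{p+1}{p}}\le X^{\frac{p+1}{2p}}$. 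This is what Lemmas~\ref{lemma:diffxandy} and~\ref{lemma:rvrgradprobbound} do, and it is where $X^{\frac{p+1}{2p}}$ in $M$ comes from.

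Relatedly, there is no gradient bound on active-constraint steps, so you cannot bound $\mathbb{E}\|\nabla F(\hat x)\|$ outright. Your Step 3 choices make the average of $\|x^{(t+1)}-x^{(t)}\|^{p+1}$ at most $\eta^{p+1}/4$. So the step producing $\hat x$ is active with probability at most $1/4$, and Markov should be applied only on the inactive event.

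Finally, for $i=p$ your Step 2, like the paper's algorithm, uses $\tilde\nabla^{p+1}F$, $\sigma_{p+1}$ and $L_{p+1}$. Neither $\mathcal{O}_p$ nor $\mathcal{F}_p$ provides these.
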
 
\begin{proof}[Proof Sketch]
    Here, we provide a brief proof sketch, deferring the full proof to Appendix \ref{appendix:rvr_upper_bound_proof}. In Lemma \ref{lemma:biasderivativediff}, we prove that \begin{flalign*}
        &\mathbb{E}||D^{(i)}(x^{(t)}) - \nabla ^{i} F(x^{(t)}) ||^2 \le 4B^2 + \frac{96}{5} \epsilon^2 + 18B^2 \epsilon^{\frac{2}{p}} + 18B^2
    \end{flalign*}  thereby establishing a bound on the difference in the derivative estimate versus the true derivative for all derivative orders. We then derive an upper bound for $\Pr(||\nabla F(\hat{x})|| \ge \frac{9M}{8p!} \eta^p)$ in terms of $B, \epsilon$, and the hyperparameters of Algorithm \ref{alg:var-reduction-pth-order} in Lemma \ref{lemma:rvrgradprobbound}. Plugging the parameters in gives an upper bound of $\frac{3}{8}$ for $\Pr(||\nabla F(\hat{x})|| \ge \frac{9M}{8p!} \eta^p)$, which finishes the proof.  
\end{proof}
 Notably, from Theorem \ref{thm:rvr-upperbound}, one can observe the following important conclusions: \begin{itemize}
    \item Unlike the purely minibatch-based approach for derivative estimation (utilized in Algorithm \ref{alg:novariancereduction}), appealing to higher order information does provide advantages in the event where $B = \Theta(1)$. 
    \item In the $B = \Theta(1)$ setting, we have a significantly improved oracle complexity of $O(\epsilon^{-2})$ compared to the $O(\epsilon^{-3})$ complexity from Algorithm \ref{alg:novariancereduction}.
    \item Consider the scenario where $B \ge \omega(1)$. Unlike the constant bias regime, appealing to higher order derivatives does not yield a better oracle complexity. Our intuition is to why this is the case, is if $B \ge \omega(1)$, the stationary guarantee is quite weak, implying that there may be little to no difference between appealing to higher order information and not doing so in terms of oracle complexity. Moreover, for the $p = 2$ case, we have an improved oracle complexity from the variance reduction based scheme in Algorithm \ref{alg:rvr_pth_order} compared to Algorithm \ref{alg:novariancereduction} for all settings of $\{ B_i \}$ such that $B \ge \omega(1)$. 
\end{itemize}

\section{Conclusion}
\label{section: conclusion}
This paper extends the settings of deterministic derivative oracles and stochastic but unbiased oracles to consider derivative oracles that are both stochastic and biased. We provide a matching first order lower bound to complement previous first-order upper bounds~\cite{OnConvergenceSGDBiased} for this stochastic and biased setting. We further extend this lower bound for algorithms that use second-order, or higher, derivative information for finding $O(\epsilon + B)$ stationary points. Then, to complement these lower bounds, we developed trust region based methods that under certain bias regimes essentially match the corresponding lower bound. We then improved upon these algorithms by incorporating a higher order variance reduction scheme, which improves the oracle complexity for other ranges of bias, and in some cases, reveals advantages of appealing to higher-order derivative information. 
\newline \newline 
Regarding opportunities for future work, our upper bound in Theorem \ref{thm:upperbounddeterministic} only matches the corresponding higher order lower bound in Theorem \ref{thm:higherorderlowerbound} for $O(\epsilon)$ bias, leaving open the possibility of stronger bounds. Moreover, it would be interesting to consider additional cases where appealing to higher-order derivative information would be beneficial for algorithms relying on biased and stochastic oracle access. 

\bibliographystyle{plain}
\bibliography{references}

\newpage 
\appendix

\section{Theorems \ref{thm:firstorderlowerbound}  and \ref{thm:higherorderlowerbound} Proofs}
\label{appendix: lower_bound_proofs}
\subsection{Preliminaries}
We first introduce some important notational conventions used throughout this section. Given a $p^{th}$ order tensor $T \in \mathbb{R}^{d \times \ldots, \times d}$, we define the support of $T$ as the following: \begin{flalign*}
    &\supp(T) = \{i\in [d]: T_i \ne 0 \}
\end{flalign*} where $T_i$ is the $(p-1)$ order subtensor denoted by $[T_i]_{j_1, \ldots, j_{p - 1}} = T_{i, j_1, \ldots, j_{p - 1}}$. For a tuple of tensors $\mathcal{T} = (T^{(1)}, T^{(2)}, \ldots)$, we define \begin{flalign*}
    &\supp(\mathcal{T}) = \bigcup_{i} \supp(T_i)
\end{flalign*} Moreover, given $x \in \mathbb{R}^d$, let \begin{flalign*}
    &\prog_\alpha (x) = \max \{i \ge 0, |x_i| > \alpha \}
\end{flalign*} which represents the highest index of $x$ whose entry is at least $\alpha$ from zero. Notice that for any $\alpha_1, \alpha_2 \in [0, 1)$ such that $\alpha_1 < \alpha_2$, we have that $\prog_{\alpha_2}(x) < \prog_{\alpha_1}(x)$. For a tensor $T$, we define $\prog(T) = \max \{\supp\{T \} \}$ which represents the highest index in $\supp \{T \}$, and naturally for a collection of tensors $\mathcal{T} = \{T^{(i)} \}$, we define $\prog(\mathcal{T}) = \max_i \prog(T^{(i)})$. 
\vspace{0.5em}
\begin{definition}
    \label{definition:zerorespectingalg}
    A biased and stochastic algorithm $A$ is zero-respecting if for any function $F$ and pth-order oracle $O_F^p$, the iterates $\{x^{(t)}\}$ satisfy \begin{flalign*}
        &\supp(x^{(t)}) \subseteq \bigcup_{i < t} \supp(O_F^p(x^{(i)}, z^{(i)} , b^{(i)}))
    \end{flalign*} for all $t \in \mathbb{N}$. 
\end{definition}
\vspace{0.5em}
\begin{definition}
    A collection of derivative estimators $\tilde{\nabla}^1F(x, z, b), \ldots, \tilde{\nabla}^pF(x, z, b)$ for a function $F$ form a probability-$\rho$ zero-chain if
    \begin{flalign*}
     &\Pr(\exists x | \prog(\tilde{\nabla}^1F(x, z, b), \ldots, \tilde{\nabla}^p F(x, z, b)) = \prog_{\frac{1}{4}}(x) + 1) \le \rho
    \end{flalign*}
    and \begin{flalign*}
        &\Pr(\exists x | \prog(\tilde{\nabla}^1F(x, z, b), \ldots, \tilde{\nabla}^p F(x, z, b)) = \prog_{\frac{1}{4}}(x) + i) = 0
    \end{flalign*} for all $i > 1$. 
\end{definition}
\vspace{0.5em}
\begin{lemma} 
\label{lemma:lowerboundprob}
Let $\tilde{\nabla}^1F(x, z, b), \ldots, \tilde{\nabla}^p F(x, z, b)$ be a collection of probability-$\rho$ zero-chain derivative  estimators for $F: \mathbb{R}^T \rightarrow \mathbb{R}$, and let $O_F^p(x, z, b) = (\tilde{\nabla}^qF(x, z, b))_{q \in \{1, \ldots, p\}}$. Let $\{x^{(t)}_{A[O_F]} \}$ be a sequence of queries produced by algorithm $A$ interacting with $O_F^p$. Then, with probability at least $1 - \delta$, \begin{flalign*}
        &\prog(x^{(t)}) < T
    \end{flalign*} for all \begin{flalign*}
        &t \le \frac{T - \log(1/\delta)}{2 \rho}
    \end{flalign*}
\end{lemma}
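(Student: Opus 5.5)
This is the standard argument from Arjevani et al.: progress is a sum of Bernoulli-type increments, and a Chernoff bound controls it. Throughout, write
\[
\pi^{(t)} = \max_{s \le t} \prog(x^{(s)}),
\]
where $\prog(x)$ denotes $\prog_0(x)$, the largest nonzero index.

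\textbf{Step 1: progress moves by at most one per query.} By zero-respecting (Definition \ref{definition:zerorespectingalg}), $\supp(x^{(t+1)})$ is contained in the union of the supports of the oracle outputs at $x^{(1)},\dots,x^{(t)}$. So $\pi^{(t+1)}$ is at most the maximum over $s\le t$ of $\prog(O_F^p(x^{(s)},z^{(s)},b^{(s)}))$. Since $\prog_{\frac14}(x) \le \prog_0(x) \le \pi^{(s)}$, the zero-chain property gives
\[
\prog(O_F^p(x^{(s)},z^{(s)},b^{(s)})) \le \pi^{(s)} + 1 \quad \text{almost surely.}
\]
Hence $\pi^{(t+1)} \le \pi^{(t)} + 1$.

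\textbf{Step 2: an increase requires a rare event.} Suppose $\pi^{(t+1)} = \pi^{(t)}+1$. Then some query $s \le t$ must have returned an output with
\[
\prog(O_F^p(x^{(s)},z^{(s)},b^{(s)})) = \prog_{\frac14}(x^{(s)})+1 = \pi^{(t)}+1.
\]
The outputs of earlier queries already had progress at most $\pi^{(t)}$, since otherwise $\pi$ would have increased earlier. So the new coordinate is revealed only by the fresh sample $z^{(t)}$ at the latest query.

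The zero-chain definition bounds by $\rho$ the probability that there exists any $x$ for which the output reaches $\prog_{\frac14}(x)+1$. This is uniform over $x$. Since $z^{(t)}$ is independent of the past and $x^{(t)}$ is measurable with respect to the past (and the algorithm's internal randomness), we get
\[
\Pr\big(\pi^{(t+1)} - \pi^{(t)} = 1 \mid \mathcal{F}_{t-1}\big) \le \rho .
\]
This measurability point is delicate when the algorithm adapts. The fix is to use the "$\exists x$" formulation, which makes the bound hold for whatever point is chosen.

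\textbf{Step 3: concentration.} Write $\pi^{(t)} \le \sum_{s\le t} \iota_s$, with increments $\iota_s \in \{0,1\}$ satisfying $\mathbb{E}[\iota_s \mid \mathcal{F}_{s-1}] \le \rho$. Apply the exponential-moment (Chernoff) bound with $\lambda = \log 2$:
\[
\mathbb{E}\, e^{\lambda \sum_s \iota_s} \le (1+\rho)^t \le e^{\rho t}.
\]
By Markov's inequality,
\[
\Pr\Big(\sum_{s\le t} \iota_s \ge T\Big) \le e^{\rho t - T\log 2}.
\]
Choose $t \le (T-\log(1/\delta))/(2\rho)$, so that $\rho t \le (T - \log(1/\delta))/2$. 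The exponent is then at most $-\log(1/\delta)$ once constants are adjusted. To get clean constants, use the bound $\Pr(\mathrm{Bin}(t,\rho)\ge T) \le e^{-(T-\rho t)\cdot \text{const}}$, or follow Arjevani et al. exactly with the coupling
\[
\Pr\big(\pi^{(t)} \ge T\big) \le \Pr\big(\mathrm{Bin}(t,\rho) \ge T\big) \le e^{-(T-2\rho t)} \le \delta \quad \text{when } 2\rho t \le T - \log(1/\delta).
\]
This gives $\prog(x^{(t)}) < T$ with probability at least $1-\delta$.

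\textbf{Main obstacle.} Step 2 is the main obstacle: rigorously justifying the conditional bound $\rho$ under adaptivity, in the presence of the bias terms $b$. This is why the definition quantifies over all $x$.
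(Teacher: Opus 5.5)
Your overall plan matches the standard argument of Arjevani et al.: a progress process with $\{0,1\}$ increments, a per-query conditional bound of $\rho$ from the uniform ``$\exists x$'' event, then a Chernoff bound. The paper relies on exactly that argument, since it only cites their Lemma 16. However, Step 2 fails for the quantity you chose to track.

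\textbf{The gap.} Your $\pi^{(t)}$ is the maximal progress of the \emph{iterates}. You claim the outputs of earlier queries had progress at most $\pi^{(t)}$ ``since otherwise $\pi$ would have increased earlier.'' Zero-respecting does not imply this. Definition~\ref{definition:zerorespectingalg} only says the iterates stay inside the revealed coordinates; it does not force the algorithm to use a coordinate as soon as it is revealed.

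Concretely, the rare event can occur at query $s$ and reveal coordinate $k+1$. The algorithm can then keep querying points of progress $k$ and only move to coordinate $k+1$ at some later step $t+1$. Conditional on $\mathcal{F}_{t-1}$, which already contains the revealing output, this increase of $\pi$ can happen with probability one. So $\Pr(\pi^{(t+1)}-\pi^{(t)}=1\mid\mathcal{F}_{t-1})\le\rho$ is false in general. The claim that the new coordinate comes from the fresh sample $z^{(t)}$ is wrong, and the hypothesis you feed into Step 3 is unjustified.

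\textbf{The fix.} Track the largest coordinate \emph{revealed by the oracle}, $\gamma^{(t)}=\max_{s\le t}\prog(O_F^p(x^{(s)},z^{(s)},b^{(s)}))$ with $\gamma^{(0)}=0$.
\begin{itemize}
\item Zero-respecting gives $\prog(x^{(t)})\le\gamma^{(t-1)}$. Hence $\prog(O_F^p(x^{(t)},z^{(t)},b^{(t)}))\le\prog_{\frac14}(x^{(t)})+1\le\gamma^{(t-1)}+1$, so increments are in $\{0,1\}$.
\item An increment $\gamma^{(t)}=\gamma^{(t-1)}+1$ forces $\prog(O_F^p(x^{(t)},z^{(t)},b^{(t)}))=\prog_{\frac14}(x^{(t)})+1$. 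This event lies inside the ``$\exists x$'' event for the fresh sample $z^{(t)}$, which is independent of $\mathcal{F}_{t-1}$ and of the algorithm's own randomness. So the increments of $\gamma$ genuinely satisfy your conditional bound of $\rho$.
\item Apply Chernoff with $\lambda=1$ directly: $\mathbb{E}\,e^{\gamma^{(t)}}\le(1+(e-1)\rho)^t\le e^{2\rho t}$. Hence $\Pr(\gamma^{(t)}\ge T)\le e^{2\rho t-T}\le\delta$ whenever $2\rho t\le T-\log(1/\delta)$. Your first attempt with $\lambda=\log 2$ does not give this constant in general.
\item Monotonicity of $\gamma$ together with $\prog(x^{(t)})\le\gamma^{(t-1)}$ gives the bound simultaneously for all such $t$.
\end{itemize}
With this change, the measurability concern you flagged is handled exactly as you suggest. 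The real subtlety was attributing each unit of progress to the query whose sample produced it.
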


\begin{proof}
    Proved in Lemma 16 of \citep{SecondOrderOptNonconvexStochastic}
\end{proof}

\begin{definition}
    Let \begin{flalign*}
    &F_T(x) = -\Psi(1) \Phi(1) + \sum_{i = 2}^{T} [\Psi(-x_{i - 1}) \Phi(-x_i) - \Psi(x_{i - 1}) \Phi(x_i)
\end{flalign*} where \begin{flalign*}
    &\Psi(x) = \begin{cases} 
    0 & \text{if } x \le \frac{1}{2} \\
    \exp(1 - \frac{1}{(2x - 1)^2})   & \text{if } x > \frac{1}{2}
    \end{cases}, \hspace{0.5em}\Phi(x) = \sqrt{e} \int_{-\infty}^x e^{-\frac{1}{2}t^2}dt
\end{flalign*}
\end{definition}

\begin{lemma}
    \label{lemma:hardfunctionproperties}
    For $F_T$, the following properties hold: 
    \begin{itemize}
        \item $F_T(0) - \inf_{x} F_T(x) \le \Delta_0 T$, where $\Delta_0 = 12$ 
        \item For all $p \ge 1$, the $p^{th}$ order derivatives of $F_t$ are $\ell_p$-Lipschitz continuous, where $\ell_p \le \exp(\frac{5}{2} p \log p + cp)$ for some $c < \infty$
        \item For all $x \in \mathbb{R}^T$, $p \in \mathbb{N}$, and $1 \le i \le T$, we have that $||\nabla_i^p F_T(x)||_\mathrm{op} \le \ell_{p - 1}$
        \item For all $x \in \mathbb{R}^T$, $p \in \mathbb{N}$, $\prog(\nabla^q F_T(x)) \le \prog_{\frac{1}{2}}(x) + 1$
        \item For all $x \in \mathbb{R}^T$, if $\prog_1(x) < T$, then $||\nabla F_T(x)|| \ge |\nabla_{\prog_1(x) + 1} F_T(x)| > 1$
        \item For all $x, y \in \mathbb{R}^d$, there exists a constant $C \ge 0$ such that $||F(x) - F(y) || \le C \sqrt{T} ||x - y||$ 
    \end{itemize}
\end{lemma}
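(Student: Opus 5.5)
Most of the bullets of Lemma~\ref{lemma:hardfunctionproperties} are properties of the Carmon et al.\ hard function already established in \cite{LowerBoundsI} (their Lemmas 2 and 3) and restated in \cite{SecondOrderOptNonconvexStochastic}. The plan is therefore to cite those results for the first five bullets, checking only that the constants and the definition of $\prog$ used here agree with theirs, and to prove the final Lipschitz bound directly.

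\emph{Suboptimality ($\Delta_0 = 12$).} We use $0\le\Psi\le e$ and $0\le\Phi\le\sqrt{2\pi e}$. Then each summand satisfies $\Psi(-x_{i-1})\Phi(-x_i) - \Psi(x_{i-1})\Phi(x_i) \ge -e\sqrt{2\pi e}$. Since $\Psi(0)=0$, every summand vanishes at $x=0$, so $F_T(0) = -\Psi(1)\Phi(1)$. It follows that $F_T(0)-\inf F_T \le e\sqrt{2\pi e}\,T < 12T$. This is the one bullet that can be verified by direct calculation.

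\emph{Higher-order smoothness $\ell_p$ and coordinate-derivative bound.} The $p$-th derivative tensor is block-tridiagonal: each summand depends only on $(x_{i-1},x_i)$. Hence $\|\nabla^{p+1}F_T\|_{\mathrm{op}}$ is bounded by a constant times $\sup_{k\le p+1}\sup|\Psi^{(k)}|\sup|\Phi^{(p+1-k)}|$. The growth $\exp(\tfrac52 p\log p + cp)$ comes from bounding $\Psi^{(k)}$ and $\Phi^{(k)}$, and for this we simply invoke \cite[Lemma 3]{LowerBoundsI}. The bound $\|\nabla^p_i F_T\|\le \ell_{p-1}$ follows by the same block-sparsity argument, because the subtensor $\nabla^p_iF_T$ is a slice of the tensor whose operator norm controls $\ell_{p-1}$.

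\emph{Zero-chain property and large gradient.} If $|x_j|\le \tfrac12$ for all $j>\prog_{1/2}(x)$, then $\Psi(\pm x_j)$ and all its derivatives vanish there. Consequently the terms coupling $x_j$ and $x_{j+1}$ have zero derivatives of every order, which gives $\prog(\nabla^qF_T(x))\le\prog_{1/2}(x)+1$. For the large-gradient bullet, set $j=\prog_1(x)+1$. We have $|x_j|<1$, and $\partial_jF_T$ contains $-\Psi(x_{j-1})\Phi'(x_j)-\Psi(-x_{j-1})\Phi'(-x_j)$ together with a term from the $(j,j+1)$ summand. We would reproduce the case analysis of \cite[Lemma 2]{LowerBoundsI}, which uses $\Psi(x)\Phi'(y)>1$ for $x\ge1$, $|y|<1$ and the sign-consistency of the neighbor term, to get $|\partial_j F_T|>1$.

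\emph{Last bullet (the main obstacle, since it is the only new claim).} As written, this bullet is the statement that $F_T$ is $C\sqrt T$-Lipschitz, which is what the main text uses: a bound $\|\nabla F_T(x)\|\le C\sqrt T$. We prove it via the mean value theorem. Each partial derivative satisfies $|\partial_i F_T(x)|\le \sup|\Psi'|\sup\Phi+\sup\Psi\sup|\Phi'|$, summed over the at most two summands containing $x_i$. This gives a bound $C_0$ independent of $T$, so $\|\nabla F_T(x)\|\le C_0\sqrt T$. The only care needed is showing $\sup|\Psi'|<\infty$, which holds because $\Psi'(x)=\Psi(x)\cdot 4(2x-1)^{-3}$ tends to $0$ as $x\downarrow\tfrac12$ and is bounded for large $x$. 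Then $|F_T(x)-F_T(y)|\le \sup_z\|\nabla F_T(z)\|\,\|x-y\|\le C\sqrt T\|x-y\|$.
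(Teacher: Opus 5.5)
Your proposal is correct and follows essentially the same route as the paper. Like the paper, you take the first five properties from Carmon et al.\ and Arjevani et al., and you prove the last one by bounding each partial derivative $\partial_i F_T$ by a $T$-independent constant (the paper uses $|\Psi|\le e$, $|\Psi'|\le 5$, $\Phi\le\sqrt{2\pi e}$, $|\Phi'|\le\sqrt{e}$ to get $51$), which gives $\|\nabla F_T(x)\|\le C\sqrt{T}$; your closing mean-value-theorem step is the correct way to pass from this gradient bound to the stated Lipschitz inequality, whereas the paper appeals to norm equivalence at that point.
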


\begin{proof}
    The first five statements follow from Lemma 2, Lemma 3, and Observation 3 of \citep{LowerBoundsI} and section G.1.1. of \citep{SecondOrderOptNonconvexStochastic}. We now prove the last statement coordinate-wise by considering three separate cases for a coordinate $i$: $1 < i < T$, $i = 1$, and $i = T$. First, let $1 < i < T$. So, we have that $\partial_i F_T(x)$
        \begin{flalign*}
            &= \frac{\partial}{\partial x_i} (\Psi(-x_{i - 1}) \Phi(-x_i) - \Psi(x_{i - 1}) \Phi(x_i)) + \frac{\partial}{\partial x_i} (\Psi(-x_i) \Phi(-x_{i + 1}) - \Psi(x_i) \Phi(x_{i + 1})) \\ 
            &= -\Psi(-x_{i - 1}) \Phi'(-x_i) - \Psi(x_{i - 1})\Phi'(x_i)  - \Psi'(-x_i) \Phi(-x_{i + 1}) - \Psi'(x_i) \Phi(x_{i + 1}) 
        \end{flalign*} Observe that by construction of $\Psi$, we have that $|\Psi(x)| \le e$ and $|\Psi'(x)| \le 5$. Also, observe that \begin{flalign*}
            &\Phi(x) = \sqrt{e} \int_{-\infty}^x e^{-t^2/2} \le \sqrt{e} \int_{-\infty}^\infty e^{-t^2/2} = \sqrt{2\pi e}
        \end{flalign*} and that $|\Phi'(x)| \le \sqrt{e}$. Therefore, it holds that \begin{flalign*}
            &|\partial_i F_T(x)| \le e \sqrt{e} + e \sqrt{e} + 5 \sqrt{2\pi e} + 5 \sqrt{2 \pi e} \le 51
        \end{flalign*} One can derive similar constants for the $i = 1$ and $i = T$ case. Let $C$ be the maximum of 51 and these constants. We have that \begin{flalign*}
            &||\nabla F_T(x)||_2 = (\sum_{i = 1}^T |\partial_i F_T(x)|^2)^{\frac{1}{2}} \le (\sum_{i = 1}^T C^2)^{\frac{1}{2}} = C \sqrt{T}
        \end{flalign*} The statement follows by applying norm equivalence in finite dimensional spaces.

\end{proof}

\begin{definition}
    For all $q$, define the derivative estimators used to be \begin{flalign*}
    &[\tilde{\nabla}^qF_T(x, z)]_i = (1 + \boldsymbol{1}\{i > \prog_{\frac{1}{4}}(x) \}(\frac{z}{\rho} - 1)) \cdot (\nabla_i^q F_T(x) + b_i^q(x))
\end{flalign*} where $b^q$ is such that $b_i^q(x) = 0$ for all $i > \prog_{1/4}(x) + 1$, $||b_i^q(x)|| \le B_q$, and $z \sim $ Bernoulli$(\rho)$.
\end{definition}
\vspace{0.5em}
\begin{lemma}
    The derivative estimators $\tilde{\nabla}^q F_T$ form a probability-$\rho$ zero-chain and satisfy: \begin{flalign*}
        &\mathbb{E}[||\tilde{\nabla}^q F_T(x, z) - \nabla^qF_T(x)||^2] \le \frac{2 \ell_{q - 1}^2 (1 - \rho)}{\rho} + 2B_q^2
    \end{flalign*}
\end{lemma}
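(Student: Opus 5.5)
The proof has two parts: the zero-chain property and the second-moment bound. Both follow the argument of Section G.1.1 of \citep{SecondOrderOptNonconvexStochastic}, with extra care for the bias term $b^q$. Write $k = \prog_{\frac14}(x)$ and $z \sim$ Bernoulli$(\rho)$.

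\emph{Zero-chain property.} For coordinates $i \le k$ the multiplier equals $1$. For $i > k$ it equals $z/\rho$, which vanishes when $z = 0$ (probability $1-\rho$).

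First take $i > k+1$. Since $\prog_{\frac12}(x) \le \prog_{\frac14}(x) = k$, Lemma \ref{lemma:hardfunctionproperties} gives $\prog(\nabla^q F_T(x)) \le k+1$, so $\nabla^q_i F_T(x) = 0$. By the definition of the estimator, $b^q_i(x) = 0$ as well. Hence $[\tilde{\nabla}^q F_T(x,z)]_i = 0$ deterministically, which gives the probability-zero condition for all increments larger than one.

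Next take $i = k+1$. The entry can be nonzero only if $z = 1$. The event $\{z=1\}$ does not depend on $x$, since a single $z$ is drawn per query, so
\[
\Pr\bigl(\exists x:\ \prog(\tilde{\nabla}^{1:p}F_T(x,z)) = k+1\bigr) \le \Pr(z=1) = \rho .
\]
The only subtlety is that the bias must not leak support beyond $k+1$. The constraint $b^q_i = 0$ for $i > k+1$ is exactly what ensures this.

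\emph{Variance bound.} Decompose
\[
\tilde{\nabla}^q F_T(x,z) - \nabla^q F_T(x) = \Bigl(\tfrac{z}{\rho}-1\Bigr)\bigl(\nabla^q_{k+1}F_T(x) + b^q_{k+1}(x)\bigr)e_{k+1} + b^q(x),
\]
where $e_{k+1}$ denotes placement in slice $k+1$. This uses only that slices $i \le k$ have multiplier $1$ and slices $i \ge k+2$ vanish.

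Apply $\|u+v\|^2 \le 2\|u\|^2 + 2\|v\|^2$ and $\mathbb{E}(z/\rho - 1)^2 = (1-\rho)/\rho$. Then bound $\|\nabla^q_{k+1}F_T\| \le \ell_{q-1}$ using Lemma \ref{lemma:hardfunctionproperties}, and $\|b^q\| \le B_q$.

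Taken literally, this produces an extra term $2(1-\rho)B_q^2/\rho$ from the bias inside the scaled slice. The stated bound is therefore sharpest when the bias is chosen so that $b^q_{k+1}$ is absorbed appropriately. Alternatively one can note that $\xi_q$ is defined as the mean-zero part: the oracle is $\nabla^q F_T + b^q + \xi_q$ with
\[
\xi_q = \Bigl(\tfrac{z}{\rho}-1\Bigr)\bigl(\nabla^q_{k+1}F_T(x) + b^q_{k+1}(x)\bigr)e_{k+1},
\]
which has mean zero. Then only $\mathbb{E}\|\xi_q\|^2$ matters for oracle membership.

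The main obstacle is reconciling this cross term with the displayed constant. I would handle it by taking $b^q_{k+1}$ supported where it is harmless, e.g. setting the bias in slice $k+1$ to zero. Then the scaled slice contributes at most $2\ell_{q-1}^2(1-\rho)/\rho$, and the remaining bias contributes $2B_q^2$, which gives exactly the claimed bound.
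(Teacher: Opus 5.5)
Your zero-chain argument is the same as the paper's, and you make it more explicit by using $\prog_{\frac12}(x)\le\prog_{\frac14}(x)$ together with the support condition on $b^q$. For the second-moment bound, you decompose $\tilde{\nabla}^q F_T - \nabla^q F_T$ directly. The paper instead routes through an unbiased estimator $\overline{\nabla}^q F_T$ (implicitly the bias-free estimator of \cite{SecondOrderOptNonconvexStochastic}) and asserts $\mathbb{E}\|\tilde{\nabla}^q F_T - \overline{\nabla}^q F_T\|^2 \le \|b^q(x)\|^2$.

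That assertion is exactly where the term you flagged gets lost. Slice $k+1$ of $\tilde{\nabla}^q F_T - \overline{\nabla}^q F_T$ is $\frac{z}{\rho}\, b^q_{k+1}(x)$, whose second moment is $\|b^q_{k+1}(x)\|^2/\rho$, not $\|b^q_{k+1}(x)\|^2$. So your worry points to a real defect in the paper's proof, and also in the statement itself for the bias class in the Definition. Take $q=1$ and $b^1$ supported on slice $k+1$, with $b^1_{k+1}(x)$ of magnitude $B_1$ and the same sign as $\nabla_{k+1}F_T(x)$. Splitting into the mean-zero part and the bias gives $\mathbb{E}\|\tilde{\nabla} F_T - \nabla F_T\|^2 = \frac{1-\rho}{\rho}(|\nabla_{k+1}F_T(x)|+B_1)^2 + B_1^2 \ge B_1^2/\rho$. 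This exceeds $\frac{2\ell_0^2(1-\rho)}{\rho} + 2B_1^2$ whenever $\rho\le\frac14$ and $B_1 > 2\ell_0$.

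Your fix, requiring $b^q_{k+1}(x)=0$ (bias supported only on slices $i\le\prog_{\frac14}(x)$), is the right one, for three reasons:
\begin{itemize}
\item Your decomposition then gives exactly $\frac{2\ell_{q-1}^2(1-\rho)}{\rho}+2B_q^2$.
\item The zero-chain property is untouched, since support only shrinks.
\item Nothing is lost downstream. In Theorems~\ref{thm:firstorderlowerbound} and~\ref{thm:higherorderlowerbound} the oracle is chosen by the constructor, and any admissible bias (even $b^q\equiv 0$) will do.
\end{itemize}
You should, however, present this as an amendment to the Definition, not as a proof of the lemma as written. Your alternative remark about $\xi_q$ does not help on its own, because $\xi_q$ still contains $(\frac{z}{\rho}-1)\,b^q_{k+1}(x)$; the restriction is what does the work. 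Without it, the honest bound is of order $\frac{1-\rho}{\rho}(\ell_{q-1}+B_q)^2 + B_q^2$. That agrees with the stated bound only up to constants, and only when $B_q \lesssim \ell_{q-1}$.
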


\begin{proof}
    First, we prove that these derivative estimators form a probability-$\rho$ chain. First, by the definition of $F_T$ and $b_i$, we can immediately conclude that $[\tilde{\nabla}^qF_T(x, z)]_i = 0$ for all $i > \prog_{\frac{1}{4}}(x) + 1$. Now, when $i = \prog_{\frac{1}{4}}(x) + 1$, we have that $[\tilde{\nabla}^qF_T(x, z)]_i = \frac{z}{p} \cdot (\nabla_i^q F_T(x) + b_i(x))$. if $z = 0$ (with probability $1 - \rho$), then we have that $[\tilde{\nabla}^q F_T(x, z)]_i = 0$. So, the first condition follows. Let $\overline{\nabla}^qF(x)$ be a stochastic but unbiased estimator of $\nabla^q F(x)$. We then have that \begin{flalign*}
        &\mathbb{E}[||\tilde{\nabla}^q F_T(x, z) - \nabla^qF_T(x)||^2] \\ 
        &\le 2\mathbb{E}[||\tilde{\nabla}^qF_T(x, z) - \overline{\nabla}^qF_T(x, z)||^2] + 2 \mathbb{E}[||\overline{\nabla}^q F_T(x, z) - \nabla^q F_T(x, z)||^2] \\  
        &\le 2 ||b_q(x)||^2 + \frac{2 \ell_{q - 1}^2(1 - \rho)}{\rho} \\ 
        &\le 2B_q^2 + \frac{2 \ell_{q - 1}^2(1 - \rho)}{\rho}
    \end{flalign*} which finishes the proof. 
\end{proof}

\subsection{Theorem \ref{thm:firstorderlowerbound} Proof}

\begin{remark}
    Our oracle model is the same as in \cite{OnConvergenceSGDBiased}, when setting $M = m = 0$, $\sigma = \sigma_1$, $\zeta = B_1$, and finding a point $x$ where $||\nabla F(x)|| = O((\epsilon + B_1^2)^{1/2})$ 
\end{remark}

\begin{proof}
    The setting of constants $M, m, \sigma, \zeta$ follows from definition 1, assumption 3, and assumption 4 in \cite{OnConvergenceSGDBiased}. The last point follows from the fact that in Theorem 4 of \cite{OnConvergenceSGDBiased}, the goal was to have iterates $\{x_t\}$ such that \begin{flalign*}
        &\frac{1}{T} \sum_{t = 0}^{T - 1} \mathbb{E}||\nabla f(x^{(t)})||^2 = O(\epsilon + B_1^2)
    \end{flalign*} which we can equivalently express as \begin{flalign*}
       \mathbb{E} ||\nabla F(\hat{x})|| = O((\epsilon + B_1^2)^{\frac{1}{2}})
    \end{flalign*} where $\hat{x}$ is drawn uniformly from $\{x_t \}$. 
\end{proof} 

\begin{theorem}
    (Theorem \ref{thm:firstorderlowerbound} restated). 
    When $p = 1$, there exists $F \in \mathcal{F}_1(\Delta, L_1)$ and $(O_F^1, P_z) \in \mathcal{O}_1(F, \sigma_1, B_1)$ such that for any first-order zero-respecting  algorithm (definition \ref{definition:zerorespectingalg}) 
    where $\epsilon < \frac{1}{4}$ and $B_1 \le O(1)$, the minimum number of queries to obtain a $(\epsilon + B_1^2)^{\frac{1}{2}}$ stationary point with constant probability is bounded below by \begin{flalign*}
        &\Omega \left(\frac{\Delta L_1}{\epsilon + B_1^2} + \frac{\Delta L_1 \sigma_1^2}{\epsilon^2 + B_1^4}\right)
    \end{flalign*}
\end{theorem}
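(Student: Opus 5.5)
We will follow the scaled zero-chain template of \cite{SecondOrderOptNonconvexStochastic} (first-order case) and \cite{LowerBoundsI}, with the bias absorbed into a new precision parameter. Set $\delta := (\epsilon + B_1^2)^{1/2}$; the target is a $\delta$-stationary point. The two terms in the bound are $\Delta L_1/\delta^2$ and $\Delta L_1\sigma_1^2/\delta^4$, since $\epsilon^2 + B_1^4 = \Theta(\delta^4)$. This is exactly the unbiased first-order stochastic lower bound at precision $\delta$. So it suffices to build, for $p=1$, a scaled hard instance together with a first-order oracle in $\mathcal{O}_1(F,\sigma_1,B_1)$ for which every zero-respecting algorithm needs that many queries to reach gradient norm $\le\delta$.

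\textbf{Construction.} Take $F(x) = \alpha F_T(\beta x)$ with $\alpha = L_1 \beta^{-2}/\ell_1$ and $\beta = 2\ell_1 \delta / L_1$. This gives $\alpha = \Theta(\delta\cdot L_1/(\ell_1\beta))$ and $\alpha\beta = 2\delta$, so the Lipschitz constant of $\nabla F$ is $\alpha\beta^2\ell_1 = L_1$. Then choose $T = \lfloor \Delta/(\alpha\Delta_0)\rfloor = \Theta(\Delta L_1/(\ell_1\Delta_0\delta^2))$, so that by Lemma \ref{lemma:hardfunctionproperties} $F(0)-\inf F \le \alpha\Delta_0 T\le\Delta$. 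Also by Lemma \ref{lemma:hardfunctionproperties}, whenever $\prog_1(\beta x) < T$ we have $\|\nabla F(x)\| = \alpha\beta\|\nabla F_T(\beta x)\| > 2\delta > \delta$. Hence any $\delta$-stationary point must have $\prog_1(\beta x) = T$.

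\textbf{Oracle.} Use the estimator from the Definition preceding the zero-chain lemma, scaled by $\alpha\beta$. For the bias, take $b \equiv 0$; this trivially satisfies $\|b\|\le B_1$. Alternatively, take $b$ supported only on coordinate $\prog_{1/4}+1$, which preserves the chain. By the zero-chain lemma the estimator is a probability-$\rho$ zero-chain with variance at most $(\alpha\beta)^2 \cdot 2\ell_0^2(1-\rho)/\rho$ (plus the bias term, now zero). We set $\rho = \min\{1, 8\ell_0^2\delta^2/\sigma_1^2\}$ so this is at most $\sigma_1^2$. Lemma \ref{lemma:lowerboundprob} with $\delta_{\rm prob} = 1/2$ and $T \ge 2\log 2$ then gives, with probability at least $1/2$, $\prog(x^{(t)})<T$ for all $t \le (T-\log 2)/(2\rho) = \Omega(T/\rho)$. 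Here zero-respecting matters: since $\prog_1(\beta x)\le \prog(x)$, none of the first $\Omega(T/\rho)$ iterates is $\delta$-stationary. Substituting the choices gives
\[
\frac{T}{\rho} = \Omega\!\left(\frac{\Delta L_1}{\delta^2}\max\Big\{1,\frac{\sigma_1^2}{\delta^2}\Big\}\right) = \Omega\!\left(\frac{\Delta L_1}{\epsilon + B_1^2} + \frac{\Delta L_1\sigma_1^2}{\epsilon^2+B_1^4}\right),
\]
using $(\epsilon+B_1^2)^2 \le 2(\epsilon^2 + B_1^4)$, with $\ell_0,\ell_1,\Delta_0$ treated as absolute constants.

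\textbf{Main obstacle.} The delicate step is bookkeeping the side conditions. First, $T\ge 1$ (and $T \ge 2\log 2$) must hold, which requires $\Delta L_1 \gtrsim \delta^2$. Here the hypotheses $\epsilon<1/4$ and $B_1 \le O(1)$ keep $\delta = O(1)$; otherwise the bound degenerates to a constant, which is fine. Second, $\rho\le 1$ must hold; when it fails, only the first term survives, which is consistent with the max. I would handle both by a short case split and absorb the constants into $\Omega(\cdot)$. A further subtlety is that the bias is not exploited at all in the hard instance. This is legitimate because the statement is existential over oracles, and the dependence on $B_1$ enters only through the weaker target accuracy $\delta$.
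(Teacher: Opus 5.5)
Your argument is correct and uses the same skeleton as the paper: the rescaling $\alpha F_T(\beta x)$ with $\alpha\beta=2\delta$ and $\alpha\beta^2\ell_1=L_1$, $T=\lfloor \Delta/(\alpha\Delta_0)\rfloor$, the probability-$\rho$ zero-chain estimator, and Lemma~\ref{lemma:lowerboundprob}. It differs in how the bias and the two terms are handled.

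\textbf{How the paper proceeds.} The paper keeps a nonzero bias in the hard oracle, multiplied by the same random factor. It treats $B_1^2$ as extra room in the second-moment budget, via $\alpha^2\beta^2(\ell_0^2(1-\rho)/\rho+B_1^2)\le\sigma_1^2+B_1^2$, which gives $\rho=\min\{4\delta^2\ell_0^2/(\sigma_1^2+B_1^2-4\delta^2B_1^2),1\}$. It then needs $\epsilon<1/4$ and $B_1=O(1)$ to control the resulting $-4\delta^2B_1^2$ and $-4B_1^4$ terms, and discards the extra $B_1^2$ at the end anyway. Finally, it runs a separate $\sigma_1=0$ construction for the $\Delta L_1/(\epsilon+B_1^2)$ term and sums the two bounds.

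\textbf{How you proceed.} You take $b\equiv 0$, so the instance is simply the unbiased stochastic lower bound at accuracy $\delta=(\epsilon+B_1^2)^{1/2}$. A single $\rho=\min\{1,8\ell_0^2\delta^2/\sigma_1^2\}$ then yields both terms through $T/\rho=T\max\{1,\sigma_1^2/(8\ell_0^2\delta^2)\}$. Your route is shorter and makes explicit that $B_1$ enters only through the weakened target. It also sidesteps two delicate points in the paper's version:
\begin{itemize}
\item A bound $\mathbb{E}\|\tilde\nabla F-\nabla F\|^2\le 2\sigma_1^2+2B_1^2$ does not by itself certify membership in $\mathcal{O}_1(F,\sigma_1,B_1)$. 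Membership requires mean-zero noise of variance at most $\sigma_1^2$ plus a separately bounded bias.
\item The rescaled bias $\alpha\beta b$ must itself have norm at most $B_1$.
\end{itemize}
The paper's extra bookkeeping does not improve the final bound.

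\textbf{Correction 1: $\beta$ is inverted.} Your formula $\beta=2\ell_1\delta/L_1$ is upside down. With $\alpha=L_1/(\ell_1\beta^2)$ it gives $\alpha\beta=L_1^2/(2\ell_1^2\delta)$, not $2\delta$. The two relations you actually use force $\beta=L_1/(2\ell_1\delta)$ and $\alpha=2\delta/\beta=4\ell_1\delta^2/L_1$, which is the paper's choice. With that fix, your $T$ and everything downstream are right.

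\textbf{Correction 2: the side condition on $T$.} $\delta=O(1)$ does not guarantee $T\ge 2\log 2$. What is needed is $\delta^2\le c\,L_1\Delta$ for a suitable absolute constant $c$ (depending on $\ell_1$ and $\Delta_0$). When this fails, the claimed bound does not merely degenerate to a constant. If $\delta^2\ge 2L_1\Delta$, then $\|\nabla F(0)\|^2\le 2L_1\Delta\le\delta^2$ for every admissible $F$, so $x^{(0)}=0$ is already $\delta$-stationary, while $\Delta L_1\sigma_1^2/\delta^4$ can be arbitrarily large. This is an implicit hypothesis of the theorem. The paper uses it tacitly when it bounds $\lfloor\cdot\rfloor-2$ below by half the argument. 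State it explicitly rather than dismissing it.
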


\begin{proof}
    We let $F_T^* = \alpha F_T(\beta x)$ for some constants $\alpha, \beta$ which we set in this proof. With probability at least $\frac{3}{4}$, we have that $\prog(x^{(t)}_{A[O_F^p]}) < T$ for all $t \le \frac{T - 2}{2 \rho}$. Since $\prog_1(x) \le \prog(x)$, we have that \begin{flalign*}
    &\mathbb{E}||\nabla F_T^*(x^{(t)}_{A[O_F^p]})|| = \alpha \beta \mathbb{E}||\nabla F_T(x^{(t)}_{A[O_F^p]})|| \ge \frac{\alpha \beta}{2}
\end{flalign*} and that \begin{flalign*}
    &\mathbb{E}||\tilde{\nabla}^q F_T^*(x, z) - \nabla^q F_T^*(x, z)||^2 \le \alpha^2 \beta^{2q} \left(\frac{2 \ell_{q - 1}^2 (1 - \rho)}{\rho} + 2B_q^2\right)
\end{flalign*} for all $q$. Notice that by construction of $F_T^*$, we have that \begin{itemize}
    \item $F_T^*(0) - \inf_x F_T^*(x) = \alpha (F_T(0) - \inf_x F_T(\alpha x)) \le \alpha \Delta_0 T$ 
    \item $||\nabla^{2} F_T^*(x)|| = \alpha \beta^{q + 1} ||\nabla^{2}F_T(\beta x)|| \le \alpha \beta^{2} \ell_1$ 
    \item $||\nabla F_T^*(x)|| \ge \alpha \beta ||\nabla F_T(x)|| \ge \frac{\alpha \beta}{2}$
\end{itemize} We also have that \begin{flalign*}
    &\mathbb{E}||\tilde{\nabla}F_T^*(x, z) - \nabla F_T^*(x, z)||^2 \le \alpha^2 \beta^2 \left(\frac{2\ell_0^2 (1 - \rho)}{\rho} + 2B_1^2\right)
\end{flalign*} We set constants such that \begin{itemize}
    \item $\alpha \Delta_0 T \le \Delta$ 
    \item $\alpha \beta^2 \ell_1 \le L_1$ 
    \item $\frac{\alpha \beta}{2} \ge (\epsilon + B_1^2)^{\frac{1}{2}}$
    \item $\alpha^2 \beta^2 (\frac{2\ell_0^2 (1 - \rho)}{\rho} + 2B_1^2) \le 2\sigma_1^2 + 2B_1^2$ $\implies$ $\alpha^2 \beta^2 (\frac{\ell_0^2 (1 - \rho)}{\rho} + B_1^2) \le \sigma_1^2 + B_1^2$ 
\end{itemize} First, let $\alpha = 2(\epsilon + B_1^2)^{\frac{1}{2}}/\beta$. We then set \begin{flalign*}
    &\rho = \min \left\{\frac{\alpha^2 \beta^2 \ell_0^2}{\sigma_1^2 + B_1^2 - \alpha^2 \beta^2 B_1^2}, 1 \right\} = \min \left\{\frac{4(\epsilon + B_1^2) \ell_0^2}{\sigma_1^2 + B_1^2 - 4(\epsilon + B_1^2)B_1^2}, 1 \right\}
\end{flalign*} With this choice of $\rho$, it's easy to check that \begin{flalign*}
    &\alpha^2 \beta^2 \left(\frac{\ell_0^2(1 - \rho)}{\rho} + B_1^2 \right) \le \sigma_1^2 + B_1^2
\end{flalign*} To satisfy the Lipschitz condition, we set \begin{flalign*}
    &\beta = \frac{L_1}{2(\epsilon + B_1^2)^{\frac{1}{2}} \ell_1}
\end{flalign*} and we set \begin{flalign*}
    &T = \lfloor \frac{\Delta}{\alpha \Delta_0 } \rfloor = \lfloor \frac{\Delta \beta}{2 \Delta_0 (\epsilon + B_1^2)^{\frac{1}{2}}} \rfloor 
\end{flalign*} By Lemma \ref{lemma:lowerboundprob}, we have that \begin{flalign*}
    &\frac{T - 2}{2 \rho} = \frac{1}{2 \rho} (\lfloor \frac{\Delta \beta}{2 \Delta_0 (\epsilon + B_1^2)^{\frac{1}{2}}} - 2 \rfloor ) \\ 
    &\ge \frac{1}{2\rho} \cdot \frac{\Delta \beta}{4 \Delta_0 (\epsilon + B_1)^{\frac{1}{2}}} \\ 
    &\ge \Omega\left(\frac{\sigma_1^2 + B_1^2 - 4(\epsilon + B_1^2)B_1^2}{8(\epsilon + B_1^2) \ell_0^2} \cdot \frac{\Delta}{4 \Delta_0 (\epsilon + B_1^2)^{\frac{1}{2}}} \cdot \frac{L_1}{2(\epsilon + B_1^2)^{\frac{1}{2}} \ell_1}\right) \\ 
    &= \Omega\left(\frac{\Delta L_1(\sigma_1^2 + (1 - 4\epsilon) B_1^2 - 4B_1^4)}{64 \Delta_0 \ell_0^2 \ell_1 (\epsilon + B_1^2)^2}\right)
\end{flalign*} Now, since $\epsilon < \frac{1}{4}$, we can continue to lower bound this expression as follows: \begin{flalign*}
    &\Omega\left(\frac{\Delta L_1(\sigma_1^2 - 4B_1^4)}{64 \Delta_0 \ell_0^2 \ell_1 (\epsilon + B_1^2)^2}\right) \\ 
    &\ge \Omega\left(\frac{\Delta L_1 \sigma_1^2}{64 \Delta_0 \ell_0^2 \ell_1 (\epsilon + B_1^2)^2} - \frac{4 \Delta L_1}{64 \Delta_0 \ell_0^2 \ell_1}\right) \\ 
    &\ge \Omega \left(\frac{\Delta L_1 \sigma_1^2}{\Delta_0\ell_0^2 \ell_1(\epsilon + B_1^2)^2}\right) \\
    &= \Omega\left(\frac{\Delta L_1 \sigma_1^2}{\Delta_0\ell_0^2 \ell_1(\epsilon^2 + B_1^4)}\right)
\end{flalign*} Now, considering the case where the derivative oracles are biased but not stochastic (i.e $\sigma_1 = 0$), we have the following conditions: \begin{itemize}
    \item $\alpha \Delta_0 T \le \Delta$ 
    \item $\alpha \beta^2 \ell_1 \le L_1$ 
    \item $\frac{\alpha \beta}{2} \ge (\epsilon + B_1^2)^{\frac{1}{2}}$
    \item $\alpha^2 \beta^2 (\frac{2 \ell_0^2(1 - \rho)}{\rho} + 2B_1^2) \le 2B_1^2 \implies \alpha^2 \beta^2 (\frac{\ell_0^2(1 - \rho)}{\rho} + B_1^2) \le B_1^2$
\end{itemize} Again, we let $\alpha = 2(\epsilon + B_1^2)^{\frac{1}{2}}/\beta$. We then set \begin{flalign*}
    &\rho = \min \left\{ \frac{\alpha^2 \beta^2 \ell_0^2}{B_1^2 (1 - \alpha^2 \beta^2)}, 1 \right\} = \min \left\{\frac{4(\epsilon + B_1^2) \ell_0^2}{B_1^2 - 4B_1^2 (\epsilon + B_1^2)}, 1 \right\}
\end{flalign*} We again set \begin{flalign*}
    &\beta = \frac{L_1}{2(\epsilon + B_1^2)^{\frac{1}{2}}\ell_1}
\end{flalign*} and \begin{flalign*}
    &T = \lfloor \frac{\Delta}{\alpha \Delta_0} \rfloor = \lfloor \frac{\Delta \beta}{2(\epsilon + B_1^2)^{\frac{1}{2}}\Delta_0} \rfloor 
\end{flalign*} By Lemma \ref{lemma:lowerboundprob}, we have that \begin{flalign*}
    &\frac{T - 2}{2\rho} = \frac{1}{2 \rho} (\lfloor \frac{\Delta \beta}{2(\epsilon + B_1^2)^{\frac{1}{2}}\Delta_0} \rfloor - 2) \\ 
    &\ge \frac{1}{2\rho} \cdot \frac{\Delta \beta}{4 \Delta_0 (\epsilon + B_1^2)^{\frac{1}{2}}} \\ 
    &\ge \Omega\left(\frac{\Delta \beta}{\Delta_0(\epsilon + B_1^2)^{\frac{1}{2}}}\right)
\end{flalign*} since for all $B_1 \le O(1)$, $\rho = \Theta(1)$. When we further lower bound this expression, we have that \begin{flalign*}
    &\Omega \left(\frac{\Delta \beta}{\Delta_0(\epsilon + B_1^2)^{\frac{1}{2}}}\right) \ge \Omega\left(\frac{\Delta L_1}{\Delta_0(\epsilon + B_1^2) \ell_1}\right) = \Omega\left(\frac{\Delta L_1}{\epsilon + B_1^2}\right)
\end{flalign*} Combining the two lower bound expressions together (as in \cite{optmethodstoccompositeopt}) yields the matching lower bound: \begin{flalign*}
    &\Omega\left(\frac{\Delta L_1}{\epsilon + B_1^2} + \frac{\Delta L_1 \sigma_1^2}{\epsilon^2 + B_1^4}\right)
\end{flalign*}
\end{proof}

\subsection{Theorem \ref{thm:higherorderlowerbound} Proof}

\begin{theorem}
    (Theorem \ref{thm:higherorderlowerbound} restated). 
    For all $p \ge 2$, $\Delta$, $L_{1: p}$, $\sigma_{1: p} > 0$, $\epsilon < \sqrt{\sigma_1}$, and $B \le \frac{\sqrt{3}}{2} \sigma_1$, there exists $F \in \mathcal{F}_p(\Delta, L_{1: p})$ and $(O_F^p, P_z) \in \mathcal{O}_p(F, \sigma_{1: p}, B_{1: p})$ such that for any $p^{th}$ order zero-respecting algorithm, the number of queries to obtain a point an $\epsilon + \max_{i} B_i$ stationary point with constant probability is bounded below by 
    \begin{flalign*}
        &\Omega(1) \cdot \frac{(\sigma_1^2 - 4(\epsilon + B)^2 B_1^2) \Delta}{32 (\epsilon + B)^3 \ell_0^2 \Delta_0} \cdot \\ 
    &\min_{q' \in \{1, \ldots, p \},  q \in \{2, \ldots, p \} } \min \left \{\left(\frac{\ell_0^2(\sigma_q^2 + B_q^2)}{2 \ell_{q - 1}^2(\sigma_1^2 + B_1^2 - 4(\epsilon + B)^2 B_1^2)}\right)^{\frac{1}{2(q - 1)}}, \left(\frac{\sigma_q^2 + B_q^2}{8(\epsilon + B)^2 B_q^2}\right)^{\frac{1}{2(q - 1)}}, \left(\frac{L_{q'}}{2 (\epsilon + B) \ell_{q'}}\right)^{\frac{1}{q'}} \right\} 
    \end{flalign*}
\end{theorem}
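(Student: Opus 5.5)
We follow the template of the proof of Theorem \ref{thm:firstorderlowerbound}, now with all $p$ derivative estimators in play. Take $F_T^*(x) = \alpha F_T(\beta x)$ and equip it with the scaled estimators $\alpha\beta^q[\tilde{\nabla}^q F_T(\beta x, z)]$. By the preceding lemma these form a probability-$\rho$ zero-chain, and the $q$-th one has error second moment at most $\alpha^2\beta^{2q}\bigl(2\ell_{q-1}^2(1-\rho)/\rho + 2B_q^2\bigr)$.

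Lemma \ref{lemma:lowerboundprob} with $\delta$ a constant shows that, with constant probability, $\prog(x^{(t)}) < T$ for all $t \le (T - O(1))/(2\rho)$. Lemma \ref{lemma:hardfunctionproperties} then gives $\|\nabla F_T^*(x^{(t)})\| \ge \alpha\beta$ (up to the factor $1/2$ used before). The query lower bound is therefore of order $T/\rho$, once we choose $\alpha, \beta, \rho, T$ to satisfy the following constraints:
\begin{itemize}
\item the stationarity gap $\alpha\beta/2 \ge \epsilon + B$;
\item the suboptimality $\alpha\Delta_0 T \le \Delta$;
\item the Lipschitz conditions $\alpha\beta^{q'+1}\ell_{q'} \le L_{q'}$ for every $q' \in \{1,\dots,p\}$;
\item the variance/bias conditions $\alpha^2\beta^{2q}\bigl(\ell_{q-1}^2(1-\rho)/\rho + B_q^2\bigr) \le \sigma_q^2 + B_q^2$ for every $q \in \{1,\dots,p\}$.
\end{itemize}

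The concrete choices are as follows. Fix $\alpha = 2(\epsilon+B)/\beta$, so the gap condition holds with equality. With this choice, the $q$-th variance condition reads $4(\epsilon+B)^2\beta^{2(q-1)}\bigl(\ell_{q-1}^2(1-\rho)/\rho + B_q^2\bigr) \le \sigma_q^2 + B_q^2$.

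For $q = 1$ there is no $\beta$-dependence, and we take
\[
\rho = \min\left\{\frac{4(\epsilon+B)^2\ell_0^2}{\sigma_1^2 + B_1^2 - 4(\epsilon+B)^2B_1^2},\, 1\right\}.
\]
The hypotheses $B \le \frac{\sqrt{3}}{2}\sigma_1$ and $\epsilon < \sqrt{\sigma_1}$ are exactly what keep the denominator positive and make $\rho$ meaningful. We will likely need to absorb some slack here: the $B_1^2$ term in the denominator is dropped to produce the prefactor $\sigma_1^2 - 4(\epsilon+B)^2B_1^2$ appearing in the statement.

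For $q \ge 2$ we split the condition into its two halves, requiring each of the noise part and the bias part to be at most $(\sigma_q^2+B_q^2)/2$. Substituting the value of $\rho$, this holds provided
\[
\beta^{2(q-1)} \le \frac{\ell_0^2(\sigma_q^2+B_q^2)}{2\ell_{q-1}^2(\sigma_1^2+B_1^2-4(\epsilon+B)^2B_1^2)}
\quad\text{and}\quad
\beta^{2(q-1)} \le \frac{\sigma_q^2+B_q^2}{8(\epsilon+B)^2B_q^2}.
\]
These are the first two terms inside the min. The Lipschitz conditions become $\beta^{q'} \le L_{q'}/(2(\epsilon+B)\ell_{q'})$, which is the third term. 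So $\beta$ is set to the minimum over $q, q'$ of the three expressions, exactly as in the statement.

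Finally set $T = \lfloor \Delta\beta/(2\Delta_0(\epsilon+B)) \rfloor$. The bound $(T-2)/(2\rho) \gtrsim \Delta\beta/(\rho\,\Delta_0(\epsilon+B))$ then combines the factor $1/\rho \gtrsim (\sigma_1^2 - 4(\epsilon+B)^2B_1^2)/(4(\epsilon+B)^2\ell_0^2)$ with $\Delta\beta/(8\Delta_0(\epsilon+B))$. This yields the displayed bound with the constant $32$ and the $(\epsilon+B)^3$ in the denominator.

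The main obstacle is the bookkeeping in two places. First, the lower bound must stay nonvacuous: we need $T \ge 4$ so the floor loses at most a constant factor, and $\rho \le 1$; when $\rho = 1$ the bound should be read as the trivial regime. Second, $\ell_{q-1}$ is used in the variance condition while $\ell_{q'}$ is used in the Lipschitz condition, and the indices must be kept consistent.

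I would first check that a single choice of $\rho$, the one coming from $q = 1$, suffices for all $q$ once $\beta$ is small. If it does not, the fallback is to take $\rho$ as the maximum over $q$ of the per-order requirements, which only changes constants, since the $q \ge 2$ constraints are enforced through $\beta$ anyway.
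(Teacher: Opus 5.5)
Your proposal follows the paper's proof essentially line for line. It uses the same choice $\alpha = 2(\epsilon+B)/\beta$ and the same $\rho$ fixed by the $q=1$ condition; as in the paper, that single $\rho$ already suffices for every $q$, because the $q \ge 2$ conditions are enforced through $\beta$, so your fallback is never needed. It also uses the same splitting of the $q\ge 2$ conditions into halves (giving the three-term minimum for $\beta$), the same $T = \lfloor \Delta\beta/(2\Delta_0(\epsilon+B)) \rfloor$, and the same estimate of $(T-2)/(2\rho)$ producing the factor $32(\epsilon+B)^3\ell_0^2\Delta_0$.

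One correction: the hypotheses $B \le \frac{\sqrt{3}}{2}\sigma_1$ and $\epsilon < \sqrt{\sigma_1}$ do not make $\sigma_1^2 + B_1^2 - 4(\epsilon+B)^2B_1^2$ positive. For $\sigma_1 \ge 1$ and $B_1 = B = \frac{\sqrt{3}}{2}\sigma_1$ it is at most $\frac{\sigma_1^2}{4}(7 - 9\sigma_1^2) < 0$, and the paper's proof never invokes these hypotheses. Instead, observe that the claimed bound is only nontrivial when $\sigma_1^2 > 4(\epsilon+B)^2B_1^2$, and that condition already forces the denominator of $\rho$ to be positive.
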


\begin{proof}
    We perform a similar argument to 
    that for the proof of Theorem \ref{thm:firstorderlowerbound}, except now accounting for the higher order Lipschitz constraints. 
    We have that \begin{flalign*}
    &\mathbb{E}||\nabla F_T^*(x_{A[O_F^p]}^{(t)})|| = \alpha \beta ||\nabla F_T(x_{A[O_F^p]}^{(t)})|| \ge \frac{\alpha \beta}{2}
\end{flalign*} and that \begin{flalign*}
    &\mathbb{E}||\tilde{\nabla}^q F_T^*(x, z) - \nabla^q F_T^*(x, z)||^2 \le \alpha^2 \beta^{2q} (\frac{2 \ell_{q - 1}^2 (1 - \rho)}{\rho} + 2B_q^2)
\end{flalign*} We now set constants such that \begin{itemize}
    \item $\alpha \Delta_0 T \le \Delta$  
    \item $\alpha \beta^{q + 1} \ell_q \le L_q$ 
    \item $\frac{\alpha \beta}{2} \ge \epsilon + \max_{j} B_j$ 
    \item $\alpha^2 \beta^{2q} (\frac{2 \ell_{q - 1}^2 (1 - \rho)}{\rho} + 2B_q^2) \le 2 \sigma_q^2 + 2B_q^2 \implies \alpha^2 \beta^{2q} (\frac{ \ell_{q - 1}^2 (1 - \rho)}{\rho} + B_q^2) \le  \sigma_q^2 + B_q^2$
\end{itemize} First, let \begin{flalign*}
    &\alpha = \frac{2(\epsilon + \max_j B_j)}{\beta}
\end{flalign*} Now, we set \begin{flalign*}
    &\rho = \min \left \{ \frac{\alpha^2 \beta^2 \ell_0^2}{\sigma_1^2 + B_1^2 - \alpha^2 \beta^2 B_1^2 }
, 1 \right\}
\end{flalign*} This implies that (up to constant factors) 
\begin{flalign*}
    &\alpha^2 \beta^{2q} (\frac{\ell_{q - 1}^2 (1 - \rho)}{\rho} + B_q^2) \\ 
    &\le \alpha^2 \beta^{2q} (\frac{\ell_{q - 1}^2}{\rho} + B_q^2) \\
    &\le \alpha^2 \beta^{2q}(\frac{\ell_{q - 1}^2 (\sigma_1^2 + B_1^2 - \alpha^2 \beta^{2} B_1^2)}{\alpha^2 \beta^2 \ell_0^2} + B_q^2)  \\ 
    &\le \frac{\beta^{2(q - 1)}\ell_{q - 1}^2 (\sigma_1^2 + B_1^2 - \alpha^2 \beta^2 B_1^2)}{\ell_0^2} + \alpha^2 \beta^{2q} B_q^2 \\  
    &= \frac{\beta^{2(q - 1)}\ell_{q - 1}^2 (\sigma_1^2 + B_1^2 - 4 (\epsilon + B)^2 B_1^2)}{\ell_0^2} + \alpha^2 \beta^{2q} B_q^2 
\end{flalign*} where $B = \max_j B_j$. Letting \begin{flalign*}
    &\frac{\beta^{2(q - 1)}\ell_{q - 1}^2 (\sigma_1^2 + B_1^2 - 4 (\epsilon + B)^2 B_1^2)}{\ell_0^2} + \alpha^2 \beta^{2q} B_q^2 \le \sigma_q^2 + B_q^2
\end{flalign*} and solving for $\beta$ such that \begin{flalign*}
    &\frac{\beta^{2(q - 1)}\ell_{q - 1}^2 (\sigma_1^2 + B_1^2 - 4 (\epsilon + B)^2 B_1^2)}{\ell_0^2} \le \frac{\sigma_q^2 + B_q^2}{2}
\end{flalign*} and \begin{flalign*}
    &\alpha^2 \beta^{2q} B_q^2 \le \frac{\sigma_q^2 + B_q^2}{2}
\end{flalign*} and the $L_q$-condition holds, yields \begin{flalign*}
    &\beta = \min_{q' \in \{1, \ldots, p \},  q \in \{2, \ldots, p \} } \min \left\{\left(\frac{\ell_0^2(\sigma_q^2 + B_q^2)}{2 \ell_{q - 1}^2(\sigma_1^2 + B_1^2 - 4(\epsilon + B)^2 B_1^2)}\right)^{\frac{1}{2(q - 1)}}, \left(\frac{\sigma_q^2 + B_q^2}{8(\epsilon + B)^2 B_q^2}\right)^{\frac{1}{2(q - 1)}}, \left(\frac{L_{q'}}{2 (\epsilon + B) \ell_{q'}}\right)^{\frac{1}{q'}} \right\}
\end{flalign*} Setting \begin{flalign*}
    &T = \lfloor \frac{\Delta}{\alpha \Delta_0} \rfloor = \lfloor \frac{\Delta \beta}{2 \Delta_0 (\epsilon + B)} \rfloor 
\end{flalign*} We now have that (assuming $T \ge 5$) \begin{flalign*}
    &\frac{T - 2}{2 \rho} = \frac{1}{2 \rho}(\lfloor \frac{\Delta \beta}{2 \Delta_0 (\epsilon + B)} \rfloor - 2) \\ 
    &\ge \frac{1}{2 \rho} \cdot \frac{\Delta \beta}{4 \Delta_0 (\epsilon + B)} \\ 
    &\ge \frac{\sigma_1^2 + B_1^2 - \alpha^2 \beta^2 B_1^2}{2 \alpha^2 \beta^2 \ell_0^2} \cdot \frac{\Delta}{4 \Delta_0 (\epsilon + B)} \cdot \\ 
    &\min_{q' \in \{1, \ldots, p \},  q \in \{2, \ldots, p \} } \min \left \{\left(\frac{\ell_0^2(\sigma_q^2 + B_q^2)}{2 \ell_{q - 1}^2(\sigma_1^2 + B_1^2 - 4(\epsilon + B)^2 B_1^2)}\right)^{\frac{1}{2(q - 1)}}, \left(\frac{\sigma_q^2 + B_q^2}{8(\epsilon + B)^2 B_q^2}\right)^{\frac{1}{2(q - 1)}}, \left(\frac{L_{q'}}{2 (\epsilon + B) \ell_{q'}}\right)^{\frac{1}{q'}} \right\} \\ 
    &\ge \frac{(\sigma_1^2 - 4(\epsilon + B)^2 B_1^2) \Delta}{32 (\epsilon + B)^3 \ell_0^2 \Delta_0} \cdot \\ 
    &\min_{q' \in \{1, \ldots, p \},  q \in \{2, \ldots, p \} } \min \left \{\left(\frac{\ell_0^2(\sigma_q^2 + B_q^2)}{2 \ell_{q - 1}^2(\sigma_1^2 + B_1^2 - 4(\epsilon + B)^2 B_1^2)}\right)^{\frac{1}{2(q - 1)}}, \left(\frac{\sigma_q^2 + B_q^2}{8(\epsilon + B)^2 B_q^2}\right)^{\frac{1}{2(q - 1)}}, \left(\frac{L_{q'}}{2 (\epsilon + B) \ell_{q'}}\right)^{\frac{1}{q'}} \right \}  
\end{flalign*} which finishes the proof. 
 
\end{proof}
\newpage 
\section{Theorem \ref{thm:upperbounddeterministic} Proof}
\label{appendix_a:minibatch_deriv_proof}
\input{appendix_a_proofs_higher_order}

\newpage 
\section{Theorem \ref{thm:rvr-upperbound} Proof}
\label{appendix:rvr_upper_bound_proof}

\begin{lemma}
    \label{lemma:biasderivativediff}
    Let $F \in \mathcal{F}_p(\Delta, L_{1: p})$. For any biased and stochastic oracle in $\mathcal{O}_p(F, \sigma_{1: p}, B_{1: p})$, let $\{D^i(x^{(t)})\}$ represent the sequence of $i^{th}$ derivative iterates generated by Algorithm \ref{alg:rvr_pth_order}. Let $B = \max_{1 \le i \le p} B_i$. Then, we have that \begin{flalign*}
        &\mathbb{E}[||D^i(x^{(t)}) - \nabla^i F(x^{(t)})||^2] \le 4B^2 + \frac{96}{5} \epsilon^2 + 18B^2 \epsilon^{2/p} + 18B^{2/p}
    \end{flalign*} for all $1 \leq i \leq p$ and all $t \ge 1$.
\end{lemma}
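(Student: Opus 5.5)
We bound the error $e_t^{(i)} := D^i(x^{(t)}) - \nabla^i F(x^{(t)})$ by conditioning on the most recent reset. Within Algorithm~\ref{alg:rvr_pth_order}, the estimator at step $t$ is either a fresh minibatch (when the Bernoulli$(b)$ coin gives $C=1$, or at $t=1$) or a recursive update of the previous estimate. We split $e_t^{(i)}$ into three pieces: a martingale noise part, a deterministic bias part, and a discretization part. We bound each separately and then recombine with $\|a+b+c\|^2 \le 3(\|a\|^2+\|b\|^2+\|c\|^2)$, or with the sharper $(1+\gamma)$-splitting where needed.

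\emph{Reset steps.} Here $D^{(i)} = \frac1n\sum_j \tilde\nabla^i F(x,z^{(j)},b_i)$. The error is therefore $b_i(x) + \frac1n\sum_j \xi_i(x,z^{(j)})$. Since the noise has zero mean, the cross term vanishes, and with $n=\lceil 5\sigma_i^2/\epsilon^2\rceil$ we get
$$\mathbb{E}\|e^{(i)}\|^2 \le B_i^2 + \sigma_i^2/n \le B^2 + \epsilon^2/5.$$

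\emph{Recursive steps.} Let $s<t$ be the last reset. Unrolling the recursion writes $D_t^{(i)}$ as $D_s^{(i)}$ plus a sum over $r=s+1,\dots,t$ of the increments $\frac1K\sum_k \tilde\nabla^{i+1}F(x^{(k-1)}_r,z,b_{i+1})[x^{(r)}-x^{(r-1)}]$. (We read the algorithm's sum with the implicit $1/K$ factor and the direction vector, as in the displayed estimator preceding Algorithm~\ref{alg:rvr_pth_order}.) By the Fundamental Theorem of Calculus, each exact increment $\nabla^iF(x^{(r)})-\nabla^iF(x^{(r-1)})$ equals $\int_0^1\nabla^{i+1}F\,[x^{(r)}-x^{(r-1)}]$. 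The per-step error then has three components:
\begin{itemize}
\item[(a)] a zero-mean noise term with conditional second moment at most $\sigma_{i+1}^2\|x^{(r)}-x^{(r-1)}\|^2/K$;
\item[(b)] a bias term of norm at most $B_{i+1}\|x^{(r)}-x^{(r-1)}\|$;
\item[(c)] a Riemann-sum discretization term of norm at most $L_{i+1}\|x^{(r)}-x^{(r-1)}\|^2/K$, using Lipschitzness of $\nabla^{i+1}F$. (When $i=p$, no $(p+1)$th oracle exists; there we use the $L_p$-Lipschitz bound on $\nabla^p F$ directly in place of this expansion.)
\end{itemize}
Given the choice $K \ge 5(\sigma_{i+1}^2+L_{i+1}\epsilon)\|x-x_{\prev}\|^2/(b\epsilon^2)$, the noise contribution per step is at most $b\epsilon^2/5$. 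The discretization term is at most $b\epsilon/5$ in norm, so its square is also small for $\epsilon<1$.

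\emph{Summing over the epoch.} Noise terms are martingale differences, so their variances add. The epoch length is geometric with mean $1/b$, so taking expectation over the reset time gives noise accumulation of order $\sum_{r} b(1-b)^{t-r}\cdot(r\text{-step count})\cdot b\epsilon^2/5 = O(\epsilon^2)$. Tracking constants through the three-way split, this produces the $\frac{96}{5}\epsilon^2$ term. The bias terms do not cancel and add linearly: $\sum_r B\|x^{(r)}-x^{(r-1)}\|$. The trust-region constraint in Algorithm~\ref{alg:var-reduction-pth-order} gives $\|x^{(r)}-x^{(r-1)}\|\le\eta\le(\epsilon+B)^{1/p}$. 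Squaring and using $(\epsilon+B)^{2/p}\le 2(\epsilon^{2/p}+B^{2/p})$ yields contributions of the form $B^2\epsilon^{2/p}$ and $B^2\cdot B^{2/p}$, absorbed into the stated $18B^2\epsilon^{2/p}+18B^{2/p}$ terms. Finally, the initial reset error contributes the $4B^2$ after the factor-of-$4$ inflation from splitting.

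\emph{Main obstacle.} The bias accumulates linearly in the epoch length, so a naive bound grows like $(\text{epoch length})^2B^2\eta^2$. The expectation over geometric epochs gives $\mathbb{E}[N^2]\approx 2/b^2$, which is not $O(1)$ unless $b$ is constant. I would therefore first try to treat $b$ as a fixed constant, which Algorithm~\ref{alg:var-reduction-pth-order} permits since $0<b\le1$ is free. With $b$ constant, the factor $2/b^2$ is absorbed into the constant $18$. If that fails, the fallback is to bound the per-step displacement by a summable quantity from the model-decrease lemma. The other delicate point is keeping constants consistent across the three-way Young's inequality split so that the specific coefficients $4$, $96/5$ and $18$ emerge.
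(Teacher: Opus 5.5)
\textbf{There is a genuine gap, at exactly the step you flag as the main obstacle, and your proposed fix proves a different statement.}

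\emph{Why the bias does not shrink.} In a recursive step, the $(i+1)$-th order bias enters as $\frac1K\sum_k b_{i+1}(x^{(k-1)})[x^{(t)}-x^{(t-1)}]$. Its norm can be as large as $B_{i+1}\|x^{(t)}-x^{(t-1)}\|\le B\eta$. The noise and discretization pieces carry a factor $b$ only because $K$ scales like $1/b$. No choice of $K$ shrinks the bias. Over a geometric epoch of length $N$ it therefore contributes about $\mathbb{E}[N^2]B^2\eta^2\approx 2B^2\eta^2/b^2$.

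\emph{Why treating $b$ as constant does not work.} The lemma has absolute constants ($4$, $\frac{96}{5}$, $18$), while Algorithm~\ref{alg:var-reduction-pth-order} allows any $b\in(0,1]$. So absorbing $2/b^2$ into the $18$ is not available.

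\emph{Why the fallback does not work.} The descent inequality (Lemma~\ref{lemma:diffxandy}) only bounds $\sum_t\mathbb{E}\|x^{(t+1)}-x^{(t)}\|^{p+1}$ in terms of $\Delta$ plus the very derivative errors you are trying to bound. It gives no pathwise control of $\sum_r\|x^{(r)}-x^{(r-1)}\|$ within an epoch.

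\emph{Comparison with the paper.} The paper's route differs in form. It uses a one-step recursion: Young's inequality with weights $1+\frac b2$ and $1+\frac 2b$, and the split of $\psi^{(t)}$ into $\mathbb{E}[\psi^{(t)}\mid\mathcal{G}^{(t)}]$ plus a centered part. This gives $\mathbb{E}\|e^{(t)}\|^2\le(1-\frac b2)\mathbb{E}\|e^{(t-1)}\|^2+\frac b2X$, followed by induction. But it needs exactly what you are missing, namely $\|\mathbb{E}[\psi^{(t)}\mid\mathcal{G}^{(t)}]\|=O(b)$. It obtains the bias part by writing $B_{i+1}\eta\le B_{i+1}b(\epsilon+B)^{1/p}$, which does not follow from a trust region of radius $\eta$. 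So the paper does not supply the missing idea either.

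\emph{The inequality cannot hold uniformly in $b$.} Take $p=2$, $F\equiv0$, no noise, $b_1\equiv Bu$ and $b_2\equiv-BI$, so $D^{(2)}\equiv-BI$. If $D^{(1)}_t=cu$ with $c>0$, the model step is $-r_tu$ with $r_t\ge r_1>0$, and each recursive update raises $c$ by $Br_t$. The error therefore grows at least linearly along an epoch. Choose $\Delta$ large enough that $T\gg1/b$; this is possible because $T$ grows linearly in $\Delta$, $M$ does not depend on $\Delta$, and $F\equiv0$ is admissible for every $\Delta$. Then, for suitable $t\le T$, $\mathbb{E}\|e^{(t)}\|^2\to\infty$ as $b\to0$ with $B,\epsilon$ fixed. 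A correct version needs one of the following:
\begin{itemize}
\item an extra term of order $B^2\eta^2/b^2$ in the bound;
\item $b$ bounded below by an absolute constant;
\item a step-dependent reset probability $b^{(t)}\gtrsim\|x^{(t)}-x^{(t-1)}\|/(\epsilon+B)^{1/p}$.
\end{itemize}

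\emph{Two smaller points.}
\begin{itemize}
\item $B^2\cdot B^{2/p}$ cannot be absorbed into $18B^{2/p}$ for large $B$. The paper makes the same slip when it writes $B(\epsilon+B)^{1/p}\le B\epsilon^{1/p}+B^{1/p}$. Using $\eta\le1$ gives $B^2\eta^2\le B^2$ instead, at the cost of a larger coefficient on $B^2$.
\item For $i=p$, Algorithm~\ref{alg:rvr_pth_order} calls a $(p+1)$-th order oracle that is outside $\mathcal{O}_p$. Replacing it with $L_p$-Lipschitzness leaves a per-step drift $L_p\eta$ with no factor $b$ or $\epsilon$, which accumulates across an epoch in the same way.
\end{itemize}
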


\begin{proof}
    We can first say that \begin{flalign*}
        &\mathbb{E}||D^i(x^{(1)}) - \nabla^i F(x^{(1)})||^2 \\
        &=\mathbb{E}[||b_i(x^{(1)}) + \frac{1}{n_1} \sum_{j = 1}^{n_1} \epsilon_1(x^{(1)}, z^{(1, j)})||^2] \\ 
        &\le 2B_i^2 + \frac{2}{n_i^2} \sum_{j = 1}^{n_i} ||\epsilon_i(x^{(1)}, z^{(1, j)})||^2 \le 2B_i^2 + \frac{2\sigma_i^2}{n_i} \le 2B_i^2 + \frac{2 \epsilon^2}{5}
    \end{flalign*} Let $e^{(t)} = D^i_t(x^{(t)})- \nabla F^i(x^{(t)})$, and we have that \begin{flalign*}
        &\mathbb{E}[||e^{(t)}||^2 |b^{(t)}] = b^{(t)} \cdot \mathbb{E}[||e^{(t)}||^2 |C^{(t)} = 1] + (1 - b^{(t)}) \cdot \mathbb{E}[||e^{(t)}||^2 |C^{(t)} = 0] 
    \end{flalign*} where \begin{flalign*}
        &\mathbb{E}[||e^{(t)}||^2 |C^{(t)} = 1] \le 2B_i^2 + \frac{2 \sigma_i^2}{n_i} \le 2B_i^2 + \frac{2 \epsilon^2}{5}
    \end{flalign*} We now say that \begin{flalign*}
        &\mathbb{E}[||e^{(t)}||^2 |C^{(t)} = 0] \\
        &\le \mathbb{E}[||e^{(t - 1)} + \mathbb{E}[\psi^{(t)} |\mathcal{G}^{(t)}]||^2]  + \mathbb{E}[||\psi^{(t)} - \mathbb{E}[\psi^{(t)}| \mathcal{G}^{(t)}]||^2] \\ 
        &\le \mathbb{E}[(1 + \frac{2}{b^{(t)}}) \cdot ||e^{(t - 1)}||^2] + \mathbb{E}[(1 + \frac{2}{b^{(t)}}) \cdot ||\mathbb{E}[\psi^{(t)}|\mathcal{G}^{(t)}]||^2] + \mathbb{E}[||\psi^{(t)} - \mathbb{E}[\psi^{(t)}| \mathcal{G}^{(t)}]||^2] 
    \end{flalign*} where the first step follows from the fact that $\mathcal{G}^{(t)}$ is a measurable set, and the second step is by Young's inequality. Above, we have that \begin{flalign*}
        &\psi^{(t)} = e^{(t)} - e^{(t - 1)} = \sum_{k = 1}^{K^{(t)}} \tilde{\nabla}^{i + 1}F(x^{(t, k - 1)}, z^{(t, k)}, b_i)(x^{(t, k)} - x^{(t, k - 1)}) - \nabla^iF(x^{(t)}) + \nabla^i F(x^{(t - 1)})
    \end{flalign*} We can calculate that \begin{flalign*}
        &\mathbb{E}[\psi^{(t)} |\mathcal{G}^{(t)}] = \sum_{k = 1}^{K^{(t)}} (\nabla^{i + 1}F(x^{(t, k - 1)}) + b_{i + 1}(x^{(t, k - 1)}))(x^{(t, k)} - x^{(t, k - 1)}) - \nabla^i F(x^{(t)}) + \nabla^i F(x^{(t - 1)})
    \end{flalign*} which implies that \begin{flalign*} 
        &||\mathbb{E}[\psi^{(t)} |\mathcal{G}^{(t)}]|| \\
        &\le \sum_{k = 1}^{K^{(t)}} ||(\nabla^i F(x^{(t, k)}) - \nabla^iF(x^{(t, k - 1)}) - \nabla^{i + 1} F(x^{(t, k - 1)})(x^{(t, k)} - x^{(t, k - 1)})|| \\ 
        &+ \sum_{k = 1}^{K^{(t)}} ||b_{i + 1}(x^{(t, k - 1)})|| \cdot ||x^{(t, k)} - x^{(t, k - 1)}|| \\ 
        &\le K^{(t)} \cdot \frac{L_{i + 1}}{2} \cdot (\frac{||x^{(t)} - x^{(t - 1)}||}{K^{(t)}})^2 + B_{i + 1} ||x^{(t)} - x^{(t - 1)}|| \\ 
        &\le \frac{b^{(t)} \epsilon}{10} + B_{i + 1} \eta \\ 
        &\le \frac{b^{(t)} \epsilon}{10} + B_{i + 1} b^{(t)}(\epsilon + \max_i B_i)^{1/p}
    \end{flalign*} We can also derive that \begin{flalign*}
        &\mathbb{E}[||\psi^{(t)} - \mathbb{E}[\psi^{(t)} | \mathcal{G}^{(t)}]||^2] \\ 
        &= \frac{1}{(K^{(t)})^2} \sum_{k = 1}^{K^{(t)}} \mathbb{E}[||(\tilde{\nabla}^{i + 1}F(x^{(t, k - 1)}, z^{(t, k)}) - \nabla^{i + 1} F(x^{(t, k- 1)}) - b_{i + 1}(x^{(t, k - 1)})(x^{(t)} - x^{(t - 1)})||^2|\mathcal{G}^{(t)}] \\ 
        &\le \frac{1}{(K^{(t)})^2} \sum_{k = 1}^{K^{(t)}} \mathbb{E}[||(\tilde{\nabla}^{i + 1}F(x^{(t, k - 1)}, z^{(t, k)}) - \nabla^{i + 1} F(x^{(t, k- 1)}) - b_{i + 1}(x^{(t, k - 1)})||^2_\mathrm{op} | \mathcal{G}^{(t)}] \cdot ||x^{(t)} - x^{(t - 1)}||^2 \\
        &\le \sigma_{i + 1}^2 \frac{||x^{(t)} - x^{(t - 1)}||^2}{K^{(t)}} \le b^{(t)} \frac{\epsilon^2}{5}
    \end{flalign*} Combining both of these inequalities together, we have that \begin{flalign*}
        &\mathbb{E}||e^{(t)}||^2 \\  
        &= b^{(t)} (2B_i^2 + \frac{2 \epsilon^2}{5}) + (1 - b^{(t)}) \cdot \mathbb{E}[||e^{(t)}||^2 | C^{(t)} = 0] \\ 
        &\le b^{(t)} (2B_i^2 + \frac{2 \epsilon^2}{5}) + \mathbb{E}[(1 - b^{(t)})(1 + \frac{2}{b^{(t)}}) ||e^{(t - 1)}||^2 + (1 - b^{(t)})(1 + \frac{2}{b^{(t)}})(\frac{b^{(t)} \epsilon}{10} + B_{i + 1} b^{(t)}(\epsilon + \max_i B_i)^{\frac{1}{p}})^2] \\ 
        &+ \mathbb{E}[(1 - b^{(t)})\cdot \frac{b^{(t)} \epsilon^2}{5}] \\ 
        &\le \mathbb{E}[b^{(t)} \cdot (2B^2 + \frac{2 \epsilon^2}{5})] + (1 - \frac{\mathbb{E}[b^{(t)}]}{2})||e^{(t - 1)}||^2 + \mathbb{E}[3b^{(t)} \cdot (\epsilon + B(\epsilon + B)^{1/p})^2 + b^{(t)} \frac{\epsilon^2}{5}] \\ 
        &\le (1 - \frac{\mathbb{E}[b^{(t)}]}{2})||e^{(t - 1)}||^2 + \mathbb{E}[b^{(t)}](2B^2 + \frac{3 \epsilon^2}{5} + 3(\epsilon + B(\epsilon + B)^{1/p})^2) \\ 
        &\le (1 - \frac{\mathbb{E}[b^{(t)}]}{2}) ||e^{(t - 1)}||^2 + \mathbb{E}[b^{(t)}] \cdot (2B^2 + \frac{3 \epsilon^2}{5} + 3(\epsilon + B \epsilon^{1/p} + B^{1/p})^2) \\ 
        &\le (1 - \frac{\mathbb{E}[b^{(t)}]}{2}) ||e^{(t - 1)}||^2 + \mathbb{E}[b^{(t)}] \cdot (2B^2 + \frac{3 \epsilon^2}{5} + 9 \epsilon^2 + 9 B^2 \epsilon^{2/p} + 9 B^{2/p}) \\ 
        &= (1 - \frac{\mathbb{E}[b^{(t)}]}{2}) ||e^{(t - 1)}||^2 + \mathbb{E}[b^{(t)}] \cdot (2B^2 + \frac{48 \epsilon^2}{5} + 9 B^2 \epsilon^{2/p} + 9 B^{2/p}) \\ 
        &= (1 - \frac{\mathbb{E}[b^{(t)}]}{2}) ||e^{(t - 1)}||^2 + \frac{\mathbb{E}[b^{(t)}]}{2} \cdot (4B^2 + \frac{96 \epsilon^2}{5} + 18 B^2 \epsilon^{2/p} + 18 B^{2/p})
    \end{flalign*} which implies that \begin{flalign*}
        &\mathbb{E}||e^{(t)}||^2 \\ 
        &\le (4B^2 + \frac{96}{5} \epsilon^2 + 18B^2 \epsilon^{2/p} + 18B^{2/p}) - (2B^2 + \frac{94}{5} \epsilon^2 + 18B^2 \epsilon^{2/p} + 18B^{2/p}) \prod_{s = 2}^t (1 - \frac{b^{(s)}}{2} ) \\ 
        &\le 4B^2 + \frac{96}{5} \epsilon^2 + 18B^2 \epsilon^{2/p} + 18B^{2/p}
    \end{flalign*} which finishes the proof. 
\end{proof}

\begin{lemma}
    \label{lemma:rvrgradprobbound}
    It holds that \begin{flalign*}
        &\Pr(||\nabla F(\hat{x})|| \ge \frac{9M}{8p!} \eta^p) \le \frac{16(p + 1)!}{M \eta^{p + 1}T}\Delta + \frac{16(p + 1)!}{M \eta^{p + 1}} \cdot [\frac{(p!)^{\frac{p + 1}{p}}}{8 \sqrt[p]{M}} + (\frac{2p!}{M})^{\frac{1}{p }}] \cdot (4B^2 + \frac{96}{5} \epsilon^2 + 18B^2 \epsilon^{2/p} + 18B^{2/p})^{\frac{p + 1}{2p}} \\ 
        &+ \frac{16p(p + 1)!}{M \eta^{\frac{p^2 - 1}{p}}} \cdot [\frac{(p!)^{\frac{p + 1}{p}}}{8 \sqrt[p]{M}} + \frac{1}{2} (\frac{2p \cdot p!}{M})^{\frac{1}{p }}] \cdot (4B^2 + \frac{96}{5} \epsilon^2 + 18B^2 \epsilon^{2/p} + 18B^{2/p})^{\frac{p + 1}{2p}} 
    \end{flalign*}
\end{lemma}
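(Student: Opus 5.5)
We follow the standard trust-region / regularized-Taylor descent analysis used for Theorem \ref{thm:upperbounddeterministic}, reusing its one-step lemmas (Lemmas \ref{lemma:diffxandy} and \ref{lemma:randomvarderivatives}), but now feeding in the variance-reduced error bound of Lemma \ref{lemma:biasderivativediff}. Write $X = 4B^2 + \frac{96}{5}\epsilon^2 + 18B^2\epsilon^{2/p} + 18B^{2/p}$ and $\delta_t^{(i)} = D_t^{(i)} - \nabla^i F(x^{(t)})$. Lemma \ref{lemma:biasderivativediff} gives $\mathbb{E}\|\delta_t^{(i)}\|^2 \le X$ for every $i$ and $t$. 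Since $\frac{p+1}{2p} \le 1$, Jensen then gives $\mathbb{E}\|\delta_t^{(i)}\|^{\frac{p+1}{p}} \le X^{\frac{p+1}{2p}}$.

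The first step is a per-iteration descent inequality. Let $y = x^{(t+1)}$ minimize the regularized model over the ball of radius $\eta$, with $M \ge 8L_p$. Comparing the model with the true Taylor expansion gives
\[
F(x^{(t)}) - F(y) \ \ge\ \frac{M}{8(p+1)!}\|y - x^{(t)}\|^{p+1} - \sum_{i=1}^p \frac{1}{i!}\|\delta_t^{(i)}\|\,\|y - x^{(t)}\|^i .
\]
Each error term $\|\delta^{(i)}\|r^i$ is split by Young's inequality with exponents $\frac{p+1}{i}$ and $\frac{p+1}{p+1-i}$. Part of $r^{p+1}$ is absorbed into the regularizer, and what remains is a multiple of $M^{-i/(p+1-i)}\|\delta^{(i)}\|^{(p+1)/(p+1-i)}$. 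Because every iterate satisfies $r \le \eta \le 1$, I will instead bound everything by the worst exponent $\frac{p+1}{p}$, using $r \le \eta$. This should produce the $(p!)^{\frac{p+1}{p}}/(8\sqrt[p]{M})$-type coefficients.

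The second step links the gradient at $y$ to the step length. From the optimality conditions of the constrained subproblem, either the constraint is inactive, or $r = \eta$ and the multiplier term is nonnegative. In either case we get
\[
\|\nabla F(y)\| \le \frac{L_p + M}{p!}\, r^p + \sum_i \frac{1}{(i-1)!}\|\delta^{(i)}\| r^{i-1} + (\text{multiplier term}) .
\]
So the event $\|\nabla F(y)\| \ge \frac{9M}{8p!}\eta^p$ forces one of two things: either $r = \eta$, which gives descent of order $M\eta^{p+1}/(8(p+1)!)$ minus error terms, or the error terms are at least of order $M\eta^p$. In the second case, Markov's inequality combined with $\mathbb{E}\|\delta\|^{\frac{p+1}{p}} \le X^{\frac{p+1}{2p}}$ should give the term with $\eta^{\frac{p^2-1}{p}}$ in the denominator. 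The factor $p$ comes from the $p$ derivative orders, and the $(2p\cdot p!/M)^{1/p}$ constant comes from the Young split used in the gradient bound.

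The third step is summing and averaging. Summing the descent inequality over $t = 1,\dots,T$ telescopes to at most $\Delta$. The indicator that $\|\nabla F(x^{(t+1)})\|$ is large is bounded by the normalized descent plus the normalized error terms. Taking expectations and averaging over the uniformly chosen $\hat x$ then gives the first term $\frac{16(p+1)!\Delta}{M\eta^{p+1}T}$ plus the two $X^{\frac{p+1}{2p}}$ terms, after converting the indicator-weighted descent via Markov.

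I expect the second step to be the main obstacle. The constrained (trust-region) optimality condition has to be translated into a clean dichotomy, and the constants have to be tracked so that the thresholds $\frac{9M}{8p!}\eta^p$ and $\frac{M}{8(p+1)!}$ come out exactly as stated. I would first handle the boundary case $r = \eta$ by directly using the descent lemma. For the interior case I would use $\nabla m(y) = 0$, which yields the Markov bound on the error.
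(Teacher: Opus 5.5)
Your outer structure matches the paper's. Jensen turns Lemma~\ref{lemma:biasderivativediff} into $\mathbb{E}\|D_t^{(i)}-\nabla^i F(x^{(t)})\|^{\frac{p+1}{p}}\le X^{\frac{p+1}{2p}}$. A one-step inequality is telescoped over $t=1,\dots,T$ with $F(x^{(1)})-F(x^{(T+1)})\le\Delta$, and averaging over $t$ is the uniform choice of $\hat x$.

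In the paper that is the whole proof. It takes the line inside the proof of Lemma~\ref{lemma:randomvarderivatives} just before the minibatch bounds are substituted. That line holds for arbitrary random $D^{(i)}$; the lemma as stated, with its $\sigma_i^2/n_i$ terms, does not apply to HO-RVR. The line already carries $\frac{M\eta^{p+1}}{16(p+1)!}$ on the probability and exactly the two brackets of the statement. The paper then bounds the $p-1$ higher-order terms by $p\,X^{\frac{p+1}{2p}}$ and divides by $\frac{M\eta^{p+1}T}{16(p+1)!}$; the $\eta^{-\frac{p^2-1}{p}}$ is just $\eta^{\frac{p+1}{p}}/\eta^{p+1}$. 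One slip in your step one: the model comparison gives $\frac{7M}{8(p+1)!}\|y-x\|^{p+1}$, not $\frac{M}{8(p+1)!}$. You need that slack to absorb the Young terms and end at the $\frac{M}{8(p+1)!}$ of Lemma~\ref{lemma:diffxandy}.

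Your step two is where you genuinely differ, and as written it does not go through. In the interior case you only get $\frac{9M}{8p!}\eta^p\le\|\nabla F(y)\|\le\frac{9M}{8p!}r^p+E$, with $E$ the error sum and using $L_p\le M/8$. For $r$ just below $\eta$ this is compatible with arbitrarily small $E$, so the event does not force ``$r=\eta$ or $E\gtrsim M\eta^p$''.

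The fix is to split at $r\ge\theta^{1/p}\eta$:
\begin{itemize}
\item On that branch, use $\mathbf{1}[r\ge\theta^{1/p}\eta]\le r^{p+1}/(\theta^{\frac{p+1}{p}}\eta^{p+1})$ together with Lemma~\ref{lemma:diffxandy}.
\item Otherwise the constraint is inactive, so $\nabla m_x(y)=0$, hence $E\ge\frac{9M(1-\theta)}{8p!}\eta^p$, and Markov with exponent $\frac{p+1}{p}$ applies.
\end{itemize}
With $\theta^{\frac{p+1}{p}}=\frac12$, this union bound reproduces the $\Delta$ term and the $(\frac{2p!}{M})^{1/p}$ term exactly. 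It gives the $(\frac{2p\cdot p!}{M})^{1/p}$ term with $p-1$ in place of $p$. It gives the $(p!)^{\frac{p+1}{p}}$ terms with constant $p^{1/p}(\frac{8}{9(1-\theta)})^{\frac{p+1}{p}}<6$ in place of $2(p+1)!$. So for $p\ge2$ the repaired route does prove the lemma.

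The paper avoids any threshold. Because $\|y-x\|\le\eta$ in either case, $\|y-x\|^p\ge\min\{\eta^p,\frac{p!}{L_p+M}(\|\nabla F(y)\|-E)\}$ holds in both the interior and boundary cases, so the multiplier never has to be controlled. This gives the soft bound $\mathbf{1}[\|\nabla F(y)\|\ge\frac{9M}{8p!}\eta^p]\le\frac{\|y-x\|^p}{\eta^p}+\frac{p!}{M\eta^p}E$. Raising it to the power $\frac{p+1}{p}$ and inserting it into Lemma~\ref{lemma:diffxandy} yields the stated constants mechanically. Your split needs $\theta$ tuned to match them, but it makes clearer where each error term comes from.

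Your attributions of the constants are swapped. The $(p!)^{\frac{p+1}{p}}/(8\sqrt[p]{M})$ coefficients come from converting the gradient event (your Markov step). The $(\frac{2p\cdot p!}{M})^{1/p}$ coefficients come from the Young split in the descent inequality, not the gradient bound.
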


\begin{proof}
    From Lemma \ref{lemma:randomvarderivatives}, we have that \begin{flalign*}
        &\mathbb{E}[F(x^{(t)}) - F(x^{(t + 1)})] \\ 
        &\ge \frac{M \eta^{p + 1}}{16(p + 1)!} \Pr(||\nabla F(x^{(t + 1)})|| \ge \frac{9M}{8p!} \eta^p) - [\frac{(p!)^{\frac{p + 1}{p}}}{8 \sqrt[p]{M}} + (\frac{2p!}{M})^{\frac{1}{p }}] \cdot \mathbb{E}[||\nabla F(x^{(t)}) - D^{(1)}(x^{(t)})||^{\frac{p + 1}{p}}] \\ 
        &- \eta^{\frac{p + 1}{p}}(\frac{(p!)^{\frac{p + 1}{p}}}{8 \sqrt[p]{M}} + \frac{1}{2} (\frac{2p \cdot p!}{M})^{\frac{1}{p }}) \sum_{i = 2}^p \mathbb{E}[||\nabla^i F(x^{(t)}) - D^i(x^{(t)})||^{\frac{p + 1}{p}}] \\ 
        &\ge \frac{M \eta^{p + 1}}{16(p + 1)!} \Pr(||\nabla F(x^{(t + 1)})|| \ge \frac{9M}{8p!} \eta^p) - [\frac{(p!)^{\frac{p + 1}{p}}}{8 \sqrt[p]{M}} + (\frac{2p!}{M})^{\frac{1}{p }}] \cdot (\mathbb{E}[||\nabla F(x^{(t)}) - D^{(1)}(x^{(t)})||^2])^{\frac{p + 1}{2p}} \\ 
        &- \eta^{\frac{p + 1}{p}} \cdot [\frac{(p!)^{\frac{p + 1}{p}}}{8 \sqrt[p]{M}} + \frac{1}{2} (\frac{2p \cdot p!}{M})^{\frac{1}{p }}] \cdot \sum_{i = 2}^p (\mathbb{E}[||\nabla^{(i)} F(x^{(t)}) - D^{(i)}(x^{(t)})||^2])^{\frac{p + 1}{2p}} \\ 
        &\ge \frac{M \eta^{p + 1}}{16(p + 1)!} \Pr(||\nabla F(x^{(t + 1)})|| \ge \frac{9M}{8p!} \eta^p) - [\frac{(p!)^{\frac{p + 1}{p}}}{8 \sqrt[p]{M}} + (\frac{2p!}{M})^{\frac{1}{p }}] \cdot (4B^2 + \frac{96}{5} \epsilon^2 + 18B^2 \epsilon^{\frac{2}{p}} + 18B^{\frac{2}{p}})^{\frac{p + 1}{2p}} \\ 
        &- p\eta^{\frac{p + 1}{p}} \cdot [\frac{(p!)^{\frac{p + 1}{p}}}{8 \sqrt[p]{M}} + \frac{1}{2} (\frac{2p \cdot p!}{M})^{\frac{1}{p }}] \cdot (4B^2 + \frac{96}{5} \epsilon^2 + 18B^2 \epsilon^{\frac{2}{p}} + 18B^{\frac{2}{p}})^{\frac{p + 1}{2p}}
    \end{flalign*} Telescoping this recurrence from $t = 1$ to $T$ gives \begin{flalign*}
        &\mathbb{E}[F(x^{(1)}) - F(x^{(T + 1)})] \\
        &\ge \frac{M \eta^{p + 1}T}{16(p + 1)!} \Pr(||\nabla F(\hat{x})|| \ge \frac{9M}{8p!} \eta^p) - T \cdot [\frac{(p!)^{\frac{p + 1}{p}}}{8 \sqrt[p]{M}} + (\frac{2p!}{M})^{\frac{1}{p }}] \cdot (4B^2 + \frac{96}{5} \epsilon^2 + 18B^2 \epsilon^{2/p} + 18B^{2/p})^{\frac{p + 1}{2p}} \\ 
        &- p\eta^{\frac{p + 1}{p}} \cdot [\frac{(p!)^{\frac{p + 1}{p}}}{8 \sqrt[p]{M}} + \frac{1}{2} (\frac{2p \cdot p!}{M})^{\frac{1}{p }}] \cdot (4B^2 + \frac{96}{5} \epsilon^2 + 18B^2 \epsilon^{2/p} + 18B^{2/p})^{\frac{p + 1}{2p}}
    \end{flalign*} which implies that \begin{flalign*}
        &\Pr(||\nabla F(\hat{x})|| \ge \frac{9M}{8p!} \eta^p) \le \frac{16(p + 1)!}{M \eta^{p + 1}T}\Delta + \frac{16(p + 1)!}{M \eta^{p + 1}} \cdot [\frac{(p!)^{\frac{p + 1}{p}}}{8 \sqrt[p]{M}} + (\frac{2p!}{M})^{\frac{1}{p }}] \cdot (4B^2 + \frac{96}{5} \epsilon^2 + 18B^2 \epsilon^{2/p} + 18B^{2/p})^{\frac{p + 1}{2p}} \\ 
        &+ \frac{16p(p + 1)!}{M \eta^{\frac{p^2 - 1}{p}}} \cdot [\frac{(p!)^{\frac{p + 1}{p}}}{8 \sqrt[p]{M}} + \frac{1}{2} (\frac{2p \cdot p!}{M})^{\frac{1}{p }}] \cdot (4B^2 + \frac{96}{5} \epsilon^2 + 18B^2 \epsilon^{2/p} + 18B^{2/p})^{\frac{p + 1}{2p}}
    \end{flalign*} which finishes the proof. 
\end{proof}

\begin{theorem}
    Theorem (\ref{thm:rvr-upperbound} restated).
    For any function $F \in \mathcal{F}_p(\Delta, L_{1: p})$, with biased and stochastic $p^{th}$ order oracles in $\mathcal{O}(F, \sigma_{1: p}, B_{1: p})$, with probability at least $\frac{5}{8}$, Algorithm \ref{alg:var-reduction-pth-order} returns a point $\hat{x}$ such that:
    \begin{itemize}
        \item If $\max_i B_i = \Theta(1)$, then $||\nabla F(\hat{x})|| \le O(\epsilon + \max_i B_i)$ with at most \begin{flalign*}
            O(\frac{\Delta (\max_i \sigma_i)^2(\epsilon + B)^{\frac{1}{p}} + (\max_i \sigma_i)^2}{\epsilon^2} + \frac{\Delta (\epsilon + B)^{\frac{1}{p}} + 1}{\epsilon})
        \end{flalign*} queries to the stochastic and biased derivative oracles.
        \vspace{0.25em}
        \item If $\max_i B_i \ge \omega(1)$, then $||\nabla F(\hat{x})|| \le O((\epsilon^2 + B^2)^{\frac{1}{2}}(\epsilon + B))$ with at most \begin{flalign*}
            &O(\frac{(\max_i \sigma_i)^2}{\epsilon^2(\epsilon + B)^{\frac{p + 1}{p}}} + \frac{1}{\epsilon(\epsilon + B)^{\frac{p + 1}{p}}})
        \end{flalign*} queries to the stochastic and biased derivative oracles. 
    \end{itemize}
where $B = \max_i B_i$. 
\end{theorem}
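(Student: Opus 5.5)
The proof plugs Lemma~\ref{lemma:rvrgradprobbound} into the parameter choices of Algorithm~\ref{alg:var-reduction-pth-order}, and then counts oracle calls. Throughout, write $X = 4B^2 + \frac{96}{5}\epsilon^2 + 18B^2\epsilon^{2/p} + 18B^{2/p}$; by Lemma~\ref{lemma:biasderivativediff} this bounds $\mathbb{E}\|D^{(i)} - \nabla^i F\|^2$ for every order $i$.

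\textbf{Step 1: probability bound.} I will show that each of the three terms of Lemma~\ref{lemma:rvrgradprobbound} is at most $\frac{1}{8}$, so the failure probability is at most $\frac{3}{8}$.
\begin{itemize}
\item The first term $\frac{16(p+1)!\Delta}{M\eta^{p+1}T}$ is at most $\frac{2(p+1)!}{A}\le \frac{1}{8}$. This uses $T \ge \frac{8A\Delta}{M\eta^{p+1}}$ and $A \ge 16(p+1)!$.
\item For the second and third terms, note that $M \ge 1$ for the relevant ranges. Hence the bracketed factors are bounded by $(p!)^{(p+1)/p} + 8(2p!)^{1/p}$ and $(p!)^{(p+1)/p} + 4(2p\cdot p!)^{1/p}$, up to the factor $1/8$.
\item Using $M^{(p+1)/p} \ge \frac{8AX^{(p+1)/(2p)}}{\eta^{p+1}}$ in the first case, and the analogous inequality with $\eta^{(p^2-1)/p}$ and the extra factor $p$ in the second, the definition of $A$ makes each term at most $\frac{1}{8}$. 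Concretely, the $\frac{1}{\sqrt[p]{M}}$ factor combines with $\frac{1}{M}$ to give $M^{-(p+1)/p}$, which is exactly what the choice of $M$ controls.
\end{itemize}
Hence with probability at least $\frac{5}{8}$ we have $\|\nabla F(\hat x)\| \le \frac{9M}{8p!}\eta^p$.

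\textbf{Step 2: stationarity level in each regime.} Here $\eta^p = \min\{\epsilon+B, (1-\epsilon)^p\} \le \epsilon + B$.
\begin{itemize}
\item If $B = \Theta(1)$, then $X = \Theta(1)$ and $\eta = \Theta(1)$, so every entry in the max defining $M$ is $O(1)$. This gives $\|\nabla F(\hat x)\| \le O(\epsilon + B)$.
\item If $B \ge \omega(1)$, then $\eta = 1-\epsilon$ is constant and $X = \Theta(B^2 + \epsilon^2)$ up to lower-order terms. Thus $M = \Theta(X^{1/2}) = \Theta((\epsilon^2+B^2)^{1/2})$, which yields the weaker guarantee $O((\epsilon^2+B^2)^{1/2}(\epsilon+B))$.
\end{itemize}

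\textbf{Step 3: oracle counting.} Per iteration and per order $i$, the HO-RVR subroutine makes either $n = \lceil 5\sigma_i^2/\epsilon^2\rceil$ calls (with probability $b$) or $K = \lceil \frac{5(\sigma_{i+1}^2 + L_{i+1}\epsilon)}{b\epsilon^2}\|x^{(t)} - x^{(t-1)}\|^2\rceil$ calls.
\begin{itemize}
\item Since $\|x^{(t)} - x^{(t-1)}\| \le \eta$, the expected cost per iteration is $O\bigl(b\sigma^2/\epsilon^2 + 1 + \frac{(\sigma^2+\epsilon)\eta^2}{b\epsilon^2}\bigr)$.
\item Choosing $b$ of order $\eta$ (or a constant) and multiplying by $T = O\bigl(\frac{\Delta}{M\eta^{p+1}} + 1\bigr)$ gives the claimed forms. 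In the $B = \Theta(1)$ case, $T = O(\Delta(\epsilon+B)^{1/p} + 1)$ after substituting $\eta$ and $M$, which matches the numerators $\Delta(\epsilon+B)^{1/p} + 1$. In the $B \ge \omega(1)$ case, the factor $(\epsilon+B)^{-(p+1)/p}$ arises from $\frac{1}{M\eta^{p+1}}$ with $M$ growing in $B$.
\item A Markov step converts the expected count into a high-probability one, absorbing constants into the $\frac{5}{8}$.
\end{itemize}

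\textbf{Main obstacle.} The delicate step is Step~2/3 bookkeeping: verifying which entry of the max in $M$ is active in each regime, and confirming that the resulting $T$ and per-iteration costs collapse to the stated expressions. The $(\epsilon+B)^{-(p+2)/p}$ entry and the interplay $\eta = \min\{(\epsilon+B)^{1/p}, 1-\epsilon\}$ must be checked carefully. I would first fix the regime, compute $\eta$ and $X$ asymptotically, identify the dominant entry of $M$, and only then substitute into $T$ and the query count.
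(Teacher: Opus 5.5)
Your Steps 1--2 and the $B=\Theta(1)$ half of Step 3 follow the paper's proof closely. The genuine gap is the oracle count in the regime $B\ge\omega(1)$.

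In that regime $\eta=1-\epsilon=\Theta(1)$ and $M=\Theta((\epsilon^2+B^2)^{1/2})=\Theta(\epsilon+B)$. Hence $\frac{1}{M\eta^{p+1}}=\Theta((\epsilon+B)^{-1})$, not $(\epsilon+B)^{-(p+1)/p}$. More seriously, the $+1$ in your own $T=O(\frac{\Delta}{M\eta^{p+1}}+1)$ takes over as $B$ grows. What your bookkeeping actually yields, with $\sigma=\max_i\sigma_i$, is
\[
O\Bigl(\Bigl(\frac{\Delta}{\epsilon+B}+1\Bigr)\Bigl(\frac{\sigma^2}{\epsilon^2}+\frac{1}{\epsilon}\Bigr)\Bigr),
\]
which does not decrease with $B$. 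No bookkeeping can do better. At $t=1$ every $D^{(i)}_{\mathrm{prev}}$ is None, so HO-RVR deterministically makes $\lceil 5\sigma_i^2/\epsilon^2\rceil$ queries for each order $i$. By contrast, the claimed bound $O\bigl(\frac{\sigma^2}{\epsilon^2(\epsilon+B)^{(p+1)/p}}+\frac{1}{\epsilon(\epsilon+B)^{(p+1)/p}}\bigr)$ is $o(\sigma^2/\epsilon^2)$ as $B\to\infty$.

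The paper does not supply the missing idea either. It reaches the second-bullet expression only by substituting $\eta^{p+1}=(\epsilon+B)^{(p+1)/p}$ in this regime, immediately after noting that $\eta=1-\epsilon$ there. It also discards the $+1$ from the ceiling in $T$. So for $B\ge\omega(1)$ you can justify the stationarity level from your Step 2 together with the display above, but not the stated query count.

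A smaller point: ``a Markov step \dots\ absorbing constants into the $\frac58$'' fails as written. The three terms of Lemma~\ref{lemma:rvrgradprobbound} already use up the whole $\frac38$ failure budget, and $M,T$ are fixed by the algorithm, so a union bound would push the success probability below $\frac58$ (the paper makes the same slip). The step is also unnecessary. Since $\|x^{(t)}-x^{(t-1)}\|\le\eta<1$, each HO-RVR call uses at most $\lceil 5\sigma_i^2/\epsilon^2\rceil+\lceil 5(\sigma_{i+1}^2+L_{i+1}\epsilon)/(b\epsilon^2)\rceil$ queries surely. With $b$ constant, the bound $T\cdot O(\sigma^2/\epsilon^2+1/\epsilon)$ is therefore deterministic, and the $\frac58$ is untouched.
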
 

\begin{proof}
    From Lemma \ref{lemma:rvrgradprobbound}, we have that \begin{flalign*}
        &\Pr(||\nabla F(\hat{x})|| \ge \frac{9M}{8p!} \eta^p) \\ 
        &\le \frac{16(p + 1)!}{M \eta^{p + 1}T}\Delta + \frac{16(p + 1)!}{M \eta^{p + 1}} \cdot [\frac{(p!)^{\frac{p + 1}{p}}}{8 \sqrt[p]{M}} + (\frac{2p!}{M})^{\frac{1}{p }}] \cdot (4B^2 + \frac{96}{5} \epsilon^2 + 18B^2 \epsilon^{2/p} + 18B^{2/p})^{\frac{p + 1}{2p}} \\ 
        &+ \frac{16p(p + 1)!}{M \eta^{\frac{p^2 - 1}{p}}} \cdot [\frac{(p!)^{\frac{p + 1}{p}}}{8 \sqrt[p]{M}} + \frac{1}{2} (\frac{2p \cdot p!}{M})^{\frac{1}{p }}] \cdot (4B^2 + \frac{96}{5} \epsilon^2 + 18B^2 \epsilon^{2/p} + 18B^{2/p})^{\frac{p + 1}{2p}}  \\ 
        &= \frac{16(p + 1)!}{M \eta^{p + 1}T}\Delta + \frac{16(p + 1)!}{M \eta^{p + 1}}[\frac{(p!)^{\frac{p + 1}{p}} + 8 \cdot (2p!)^{\frac{1}{p}}}{8M^{\frac{1}{p}}}] \cdot (4B^2 + \frac{96}{5} \epsilon^2 + 18B^2 \epsilon^{2/p} + 18B^{2/p})^{\frac{p + 1}{2p}} \\ 
        &+ \frac{16p(p + 1)!}{M \eta^{\frac{p^2 - 1}{p}}} \cdot [\frac{(p!)^{\frac{p + 1}{p}} + 4(2p \cdot p!)^{\frac{1}{p}}}{8M^{\frac{1}{p}}}] \cdot (4B^2 + \frac{96}{5} \epsilon^2 + 18B^2 \epsilon^{2/p} + 18B^{2/p})^{\frac{p + 1}{2p}} \\ 
        &= \frac{16(p + 1)!}{M \eta^{p + 1}T}\Delta + \frac{2(p + 1)!}{M^{\frac{p + 1}{p}} \eta^{p + 1}}[(p!)^{\frac{p + 1}{p}} + 8 \cdot (2p!)^{\frac{1}{p}}] \cdot (4B^2 + \frac{96}{5} \epsilon^2 + 18B^2 \epsilon^{2/p} + 18B^{2/p})^{\frac{p + 1}{2p}} \\ 
        &+ \frac{2p(p + 1)!}{M^{\frac{p + 1}{p}}\eta^{\frac{p^2 - 1}{p}}} \cdot [(p!)^{\frac{p + 1}{p}} + 4(2p \cdot p!)^{\frac{1}{p}}] \cdot (4B^2 + \frac{96}{5} \epsilon^2 + 18B^2 \epsilon^{2/p} + 18B^{2/p})^{\frac{p + 1}{2p}}
    \end{flalign*} Letting \begin{flalign*}
        &A = \max(16(p + 1)!, 2(p + 1)! \cdot [(p!)^{\frac{p + 1}{p}} + 8 \cdot (2p!)^{\frac{1}{p}}], 2(p + 1)! \cdot [(p!)^{\frac{p + 1}{p}} + 4(2p \cdot p!)^{\frac{1}{p}}] )
    \end{flalign*} 
    we get an upper bound of \begin{flalign*}
        &\frac{A \Delta}{M \eta^{p + 1} T} + \frac{A}{M^{\frac{p + 1}{p}} \eta^{p + 1}} \cdot (4B^2 + \frac{96}{5} \epsilon^2 + 18B^2 \epsilon^{2/p} + 18B^{2/p})^{\frac{p + 1}{2p}} \\ 
        &+ \frac{A}{M^{\frac{p + 1}{p}} \eta^{\frac{p^2 - 1}{p}}} \cdot (4B^2 + \frac{96}{5} \epsilon^2 + 18B^2 \epsilon^{2/p} + 18B^{2/p})^{\frac{p + 1}{2p}}
    \end{flalign*}
    Let $X = 4B^2 + \frac{96}{5} \epsilon^2 + 18 B^2 \epsilon^{2/p} + 18B^{2/p}$. Since we set \begin{flalign*}
        &M = \max \{(\frac{8A X^{\frac{p + 1}{2p}}}{\eta^{p + 1}})^{\frac{p}{p + 1}}, (\frac{8Ap \cdot X^{\frac{p + 1}{2p}}}{\eta^{\frac{p^2 - 1}{p}}})^{\frac{p}{p + 1}}, (\epsilon + B)^{\frac{-p - 2}{p}}, 8 L_p)  
    \end{flalign*} we have that \begin{flalign*}
        &\frac{A}{M^{\frac{p + 1}{p}} \eta^{p + 1}} (4B^2 + \frac{96}{5} \epsilon^2 + 18B^2 \epsilon^{2/p} + 18B^{2/p})^{\frac{p + 1}{2p}} \le \frac{1}{8}
    \end{flalign*} and \begin{flalign*}
        &\frac{pA}{M^{\frac{p + 1}{p}}\eta^{\frac{p^2 - 1}{p^2}}}(4B^2 + \frac{96}{5} \epsilon^2 + 18B^2 \epsilon^{2/p} + 18B^{2/p})^{\frac{p + 1}{2p}} \le \frac{1}{8}
    \end{flalign*} and through setting \begin{flalign*}
        &T = \lceil \frac{8 A \Delta}{M \eta^{p + 1}} \rceil > \frac{8A \Delta}{M \eta^{p + 1}}
    \end{flalign*} we have that \begin{flalign*}
        &\frac{A \Delta}{M \eta^{p + 1}T} < \frac{1}{8}
    \end{flalign*} Notice that if $B = \Theta(1)$, then $X = \Theta(1)$, implying that $M \le O(1)$, meaning that we have that \begin{flalign*}
        &\frac{5}{8} \le \Pr(||\nabla F(\hat{x})|| \le \frac{9M}{8p!} \eta^p)  \le \Pr(||\nabla F(\hat{x})|| \le O(1) \cdot (\epsilon + B))
    \end{flalign*} On the other hand, if $B \ge \omega(1)$, then $X = O(B^2 + \epsilon^2)$, which implies that 
    \newline 
    $M = \max(O(\frac{(B^2 + \epsilon^2)^{1/2}}{\eta^p}), O(\frac{(B^2 + \epsilon^2)^{\frac{1}{2}}}{\eta^{p - 1}}), O(1), O(1))$. Since $B \ge \omega(1)$, $1 - \epsilon < (\epsilon + B)^{\frac{1}{p}}$, so $\eta = 1 - \epsilon$. 
    Therefore, $M = O((B^2 + \epsilon^2)^{\frac{1}{2}})$. We then have that \begin{flalign*}
        &\frac{5}{8} \le \Pr(||\nabla F(\hat{x})|| \le \frac{9M}{8p!} \eta^p) \le \Pr(||\nabla F(\hat{x})|| \le O((\epsilon^2 + B^2)^{\frac{1}{2}}(\epsilon + B))
    \end{flalign*} 
    Let $M_i$ be the number of oracle queries for derivative order $i$, and let $M = \sum_{i = 1}^{p + 1} M_i$ be the total number of oracle queries.
    With regards to the oracle complexity, we have that \begin{flalign*}
        &\mathbb{E}[M] \\ 
        &\le \sum_{i = 1}^{p + 1} \mathbb{E}[M_i] \\ 
        &= T \sum_{i = 1}^{p + 1} \Pr(C = 1) \mathbb{E}[m_i | C = 1] + \Pr(C = 0) \mathbb{E}[m_i |C = 0] \\ 
        &= T \sum_{i = 1}^{p + 1} bn_i + (1 - b) K_i \\ 
        &\le T \sum_{i = 1}^{p + 1} b(\frac{5 \sigma_i^2}{\epsilon^2} + 1) + (1 - b)(\frac{5(\sigma_{i + 1}^2 + L_{i + 1} \epsilon)}{b \epsilon^2} + 1)
    \end{flalign*} Letting $\sigma = \max_i \sigma_i$, we upper bound the expression above by \begin{flalign*}
        &T \sum_{i = 1}^{p + 1} b(\frac{5 \sigma^2}{\epsilon^2} + 1) + (1 - b) (\frac{5(\sigma^2 + L_{i + 1} \epsilon)}{b \epsilon^2} + 1) \\ 
        &\le T \sum_{i = 1}^{p + 1} \frac{5b^2 \sigma^2 + 5 \sigma^2 + 5 L_{i + 1} \epsilon}{b \epsilon^2} + 2 \\ 
        &\le T \cdot O(\frac{\sigma^2}{\epsilon^2} + \frac{1}{\epsilon}) 
    \end{flalign*} We again analyze the following cases: $B = \Theta(1)$ and $B \ge \omega(1)$. If $B = \Theta(1)$, then \begin{flalign*}
        &\frac{8A \Delta}{M \eta^{p + 1}} \le \frac{8A \Delta}{(\epsilon + B)^{-\frac{1}{p}} \eta^{p + 1}} \le \frac{8A \Delta}{(\epsilon + B)^{\frac{-p - 2}{p}}(\epsilon + B)^{\frac{p + 1}{p}} \cdot O(1)} = O(1) \cdot 8A\Delta \cdot (\epsilon + B)^{\frac{1}{p}}
    \end{flalign*} Therefore, \begin{flalign*}
        &T \cdot O(\frac{\sigma^2}{\epsilon^2} + \frac{1}{\epsilon}) \le O(\frac{\Delta \sigma^2 (\epsilon + B)^{\frac{1}{p}} + \sigma^2}{\epsilon^2} + \frac{\Delta (\epsilon + B)^{\frac{1}{p}} + 1}{\epsilon})
    \end{flalign*} If $B \ge \omega(1)$, then \begin{flalign*}
        &\frac{8A \Delta}{M \eta^{p + 1}} = \frac{8A \Delta}{M (\epsilon + B)^{\frac{p + 1}{p}}} \le \frac{8A \Delta}{8L_p \cdot (\epsilon + B)^{\frac{p + 1}{p}}} \le O(\frac{\Delta}{L_p \cdot (\epsilon + B)^{\frac{p + 1}{p}}})
    \end{flalign*} and so we have that \begin{flalign*}
        &T \cdot O(\frac{\sigma^2}{\epsilon^2}+ \frac{1}{\epsilon}) \le O(\frac{\sigma^2}{\epsilon^2(\epsilon + B)^{\frac{p + 1}{p}}} + \frac{1}{\epsilon(\epsilon + B)^{\frac{p + 1}{p}}})
    \end{flalign*} and then applying Markov's inequality followed by a union bound yields the above oracle complexities with probability at least $\frac{5}{8}$, which finishes the proof.  
\end{proof}

\end{document}

%% file: macros.tex
\newcommand{\grad}{\nabla}

\def\ddefloop#1{\ifx\ddefloop#1\else\ddef{#1}\expandafter\ddefloop\fi}
\def\ddef#1{\expandafter\def\csname 
bb#1\endcsname{\ensuremath{\mathbb{#1}}}}
\ddefloop ABCDEFGHIJKLMNOPQRSTUVWXYZ\ddefloop
\def\ddefloop#1{\ifx\ddefloop#1\else\ddef{#1}\expandafter\ddefloop\fi}
\def\ddef#1{\expandafter\def\csname 
b#1\endcsname{\ensuremath{\mathbf{#1}}}}
\ddefloop ABCDEFGHIJKLMNOPQRSTUVWXYZ\ddefloop
\def\ddef#1{\expandafter\def\csname 
c#1\endcsname{\ensuremath{\mathcal{#1}}}}
\ddefloop ABCDEFGHIJKLMNOPQRSTUVWXYZ\ddefloop
\def\ddef#1{\expandafter\def\csname 
h#1\endcsname{\ensuremath{\widehat{#1}}}}
\ddefloop ABCDEFGHIJKLMNOPQRSTUVWXYZ\ddefloop
\def\ddef#1{\expandafter\def\csname 
hc#1\endcsname{\ensuremath{\widehat{\mathcal{#1}}}}}
\ddefloop ABCDEFGHIJKLMNOPQRSTUVWXYZ\ddefloop
\def\ddef#1{\expandafter\def\csname 
t#1\endcsname{\ensuremath{\widetilde{#1}}}}
\ddefloop ABCDEFGHIJKLMNOPQRSTUVWXYZ\ddefloop
\def\ddef#1{\expandafter\def\csname 
tc#1\endcsname{\ensuremath{\widetilde{\mathcal{#1}}}}}
\ddefloop ABCDEFGHIJKLMNOPQRSTUVWXYZ\ddefloop

\usepackage{nicefrac}

\newsavebox\CBox

\newcommand{\norm}[1]{\left \| #1 \right \|}

\newcommand{\removed}[1]{}

\usepackage{booktabs}
\usepackage{subcaption}
\usepackage{enumitem}
\DeclareSymbolFont{bbold}{U}{bbold}{m}{n}
\DeclareSymbolFontAlphabet{\mathbbold}{bbold}

%% file: higher-order-no-variance-reduction.tex
\begin{algorithm}[H]
\caption{\textbf{B}iased and \textbf{S}tochastic \textbf{P}th-\textbf{O}rder \textbf{R}egularized \textbf{T}rust \textbf{R}egion}\label{alg:novariancereduction}
    \label{alg:pth_deterministic}
\label{alg:sgd}
\begin{algorithmic}[1]
\Require Biased and stochastic oracle $(O_F^p, P_z) \in \mathcal{O}_p(F, \sigma_{1: p}, B_{1: p})$ for $F \in \mathcal{F}_p(\Delta, L_{0: p})$, Precision parameter $\epsilon$, Initial parameter $x^{(0)}$
\vspace{0.5em}
\State Find constants $\{C_i \}_{i = 1}^p$ such that (for all $n$ and all $t \ge 1$): \begin{flalign*}
    &\label{eqlabel:condition}
            \mathbb{E}||D_t^{(i)} - \nabla F^{(i)}(x^{(t)})||_{\mathrm{op}}^{\frac{p + 1}{p}} \le 2^{1/p} \cdot \left(\left(\frac{C_i \cdot \sigma_i^2}{n})^{\frac{p + 1}{2p}} + B_i^{\frac{p + 1}{p}}\right)\right)
\end{flalign*}
\State $M \gets 8 L_p$, $\eta \gets \min \{ (\epsilon + \max_i B_i)^{\frac{1}{p}}, 1 - \epsilon \}$
\State $A \gets \frac{16(p + 1)!}{M}\max \left \{1, \frac{(p!)^{\frac{p + 1}{p}}}{4 M^{\frac{1}{p}}} + 2(\frac{2p!}{M})^{\frac{1}{p}}, \frac{(p!)^{\frac{p + 1}{p}}}{4 M^{\frac{1}{p}}} + 2(\frac{2p \cdot p!}{M})^{\frac{1}{p}} \right \}$
\State $T \gets \lceil \frac{8A \Delta}{\eta^{p + 1}} \rceil $
\State Pick $n_1$ such that \begin{flalign*}
    &\max \left\{\frac{C_1 \cdot \sigma_1^2}{(\frac{\eta^{p + 1}}{8A} - B_1^{\frac{p + 1}{2p}})^{\frac{2p}{p + 1}}}, 1 \right\} \le n_1 \le \frac{(\epsilon + \max_i B_i)^{\frac{p + 1}{p}} (\max_i \sigma_i)^2}{\epsilon^3(1 + \max_i B_i)^{\frac{p + 1}{p}}} 
\end{flalign*}
\State For all $2 \le i \le p$, pick $n_i$ such that \begin{flalign*}
    &\max \left\{ \frac{C_i \sigma_i^2}{(\frac{\eta^{\frac{p^2 - 1}{p}}}{8Ap} - B_i^{\frac{p + 1}{2p}})^{\frac{2p}{p + 1}}}, 1 \right\} \le n_i \le \frac{(\epsilon + \max_i B_i)^{\frac{p + 1}{p}}(\max_i \sigma_i)^2}{\epsilon^3(1 + \max_i B_i)^{\frac{p + 1}{p}}}
\end{flalign*}

\For{$t = 0$ to $T - 1$}
    \For{$i = 1$ to $p$}
        \State Query the $i^{th}$ order oracle $n_i$ times at $x^{(t)}$ and compute \begin{flalign*}
            &D^i(x^{(t)}) = \frac{1}{n_i} \sum_{j = 1}^{n_i} \tilde{\nabla} F(x^{(t)}, z^{(t, j)}, b_i), \hspace{0.25em} z^{(t, j)} \sim P_z
        \end{flalign*}
    \EndFor
    \State Set the next point $x^{(t + 1)}$ as \begin{flalign*}
        &x^{(t + 1)} = \argmin_{y: ||y - x^{(t)}|| \le \eta} \sum_{i = 1}^p \frac{1}{i!} D^{(i)}[y - x^{(t)}]^i + \frac{M}{(p + 1)!} ||y - x^{(t)}||^{p + 1}
    \end{flalign*}
\EndFor

\State \Return $\hat{x}$ chosen uniformly at random from $\{x^{(t)} \}_{t = 1}^T$
\end{algorithmic}
\end{algorithm}

%% file: appendix_a_proofs_higher_order.tex
\begin{lemma}
\label{lemma:constantsC_i}
Given i.i.d $A_i \in \mathbb{R}^{d_1 \times \cdots \times d_m}$, where $d_1 = \ldots = d_m = d$, and $\mathbb{E}[A_i] = B$ and $\mathbb{E}[||A_i - B||^2] \le \sigma^2$, we have that  \begin{flalign*}
        &\mathbb{E}[||\frac{1}{n} \sum_{i = 1}^{n} A_i - B||_{\mathrm{op}}^2] \le \frac{C \cdot \sigma^2}{n}
    \end{flalign*} for some $d$-dependent, $n$-independent constant $C \ge 0$.
\end{lemma}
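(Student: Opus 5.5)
The idea is to pass through the Frobenius (entrywise Euclidean) norm on tensors, where the variance of an i.i.d. average behaves exactly as for vectors, and then use finite-dimensional norm equivalence to return to the operator norm. Throughout, I read the hypothesis $\mathbb{E}[\|A_i - B\|^2] \le \sigma^2$ in the operator norm, since that is the paper's default convention for tensors.

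Let $Y_i = A_i - B$, so the $Y_i$ are i.i.d., mean zero, and satisfy $\mathbb{E}\|Y_i\|_{\mathrm{op}}^2 \le \sigma^2$. Write $\|\cdot\|_F$ for the Euclidean norm of the tensor viewed as a vector in $\mathbb{R}^{d^m}$. Both $\|\cdot\|_F$ and $\|\cdot\|_{\mathrm{op}}$ are norms on the same finite-dimensional space, so there are constants $c_1, c_2>0$, depending only on $d$ and $m$, with
\[
c_1\|T\|_F \le \|T\|_{\mathrm{op}} \le c_2\|T\|_F .
\]
Explicitly, one may take $c_2 = 1$, because for unit vectors $\langle T, v_1,\ldots,v_m\rangle \le \|T\|_F$ by Cauchy--Schwarz. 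One may take $c_1 = d^{-m/2}$, because each entry is bounded by the operator norm (test against basis vectors), so $\|T\|_F \le d^{m/2}\|T\|_{\mathrm{op}}$. If the paper's operator norm for symmetric tensors is the single-vector supremum $\sup_{\|v\|=1}|\langle T, v,\ldots,v\rangle|$, the lower bound still holds with a polarization constant depending on $m$.

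The key step is the vector-valued variance identity in the Frobenius norm. Expand
\[
\mathbb{E}\Bigl\|\tfrac1n\sum_i Y_i\Bigr\|_F^2=\tfrac1{n^2}\sum_{i,j}\mathbb{E}\langle Y_i,Y_j\rangle_F .
\]
The cross terms vanish by independence and $\mathbb{E}Y_i=0$, since $\mathbb{E}\langle Y_i,Y_j\rangle_F=\langle \mathbb{E}Y_i,\mathbb{E}Y_j\rangle_F=0$ for $i\ne j$. Integrability is supplied by the finite second moments. This leaves
\[
\mathbb{E}\Bigl\|\tfrac1n\sum_i Y_i\Bigr\|_F^2=\tfrac1n\,\mathbb{E}\|Y_1\|_F^2\le \tfrac{1}{n c_1^2}\,\mathbb{E}\|Y_1\|_{\mathrm{op}}^2\le \tfrac{\sigma^2}{c_1^2 n}.
\]

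Finally, pass back to the operator norm:
\[
\mathbb{E}\Bigl\|\tfrac1n\sum_i A_i - B\Bigr\|_{\mathrm{op}}^2\le c_2^2\,\mathbb{E}\Bigl\|\tfrac1n\sum_i Y_i\Bigr\|_F^2\le \tfrac{c_2^2}{c_1^2}\cdot\tfrac{\sigma^2}{n}.
\]
This gives the claim with $C = c_2^2/c_1^2$, which may be taken as $d^m$; it depends on $d$ (and the order $m$) but not on $n$.

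There is no genuine obstacle. The only point needing care is being explicit about which norm the hypothesis uses and justifying the equivalence constants, so that $C$ is independent of $n$; the rest is routine.
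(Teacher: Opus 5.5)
Your proof is correct, and it takes a leaner route than the paper's. The paper first symmetrizes, introducing independent copies $X_i'$ and Rademacher signs at the cost of a factor $4$. It then flattens each $X_i$ into a $d^{\lfloor m/2\rfloor}\times d^{\lceil m/2\rceil}$ matrix $Z_i$, bounds $\|X_i\|_{\mathrm{op}}\le\|Z_i\|_{S_{2p}}\le C_1^2D^{1/(2p)}\|X_i\|_{\mathrm{op}}$, and invokes the matrix Khintchine inequality. However, it then takes $p=1$, where Khintchine collapses to the exact identity $\mathbb{E}_\epsilon\|\sum_i \epsilon_i Z_i\|_{S_2}^2=\sum_i\|Z_i\|_{S_2}^2$. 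Since flattening preserves the Frobenius norm, this is exactly your orthogonality computation.

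So both proofs rest on the same two facts. First, cross terms vanish for independent mean-zero summands in the Frobenius inner product. Second, the Frobenius and operator norms are equivalent with $d$-dependent constants. Your version drops the symmetrization, flattening and Khintchine steps and gives $C=d^m$. The paper gets $4d^{m/2}C_1^2$, where $C_1^2$ is itself of order $d^{m/2}$, so its constant is of the same order but carries the extra factor $4$.

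The paper's heavier machinery could in principle give sharper dimension dependence by choosing $p$ of order $\log d$; for matrices ($m=2$) this yields $C=O(\log d)$ instead of $d$. The paper does not exploit this, though. Your route also avoids a technical wrinkle in the paper's write-up: it uses the Hermitian form $\|(\sum_i Z_i^2)^{1/2}\|_{S_{2p}}$ of Khintchine for matrices $Z_i$ that are in general non-square and non-symmetric.

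Two minor refinements to your argument. On symmetric tensors, the single-vector and multilinear suprema coincide by Banach's theorem, so your polarization constant can be taken to be $1$. Also, bounding the $d^{m-1}$ fibers rather than the $d^m$ entries improves your constant to $d^{m-1}$.
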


\begin{proof}
    Let $X_i = A_i - B$ and observe that
        \begin{flalign*}
        &\mathbb{E}[||\sum_{i = 1}^{n} X_i||_\mathrm{op}^2] \le \mathbb{E}_{X, X'}[||\sum_{i = 1}^{n} X_i - X_i'||^2_\mathrm{op}] \\ 
        &= \mathbb{E}_{X, X', \epsilon}[||\sum_{i = 1}^{n} \epsilon_i(X_i - X_i')||^2_\mathrm{op}] \\
        &\le 4 \mathbb{E}_{X, \epsilon}[||\sum_{i = 1}^{n} \epsilon_i X_i||^2_\mathrm{op}]
        \end{flalign*} where $(X_i')_{i = 1}^n$ is a sequence of independent copies of $(X_i)_{i = 1}^n$ and $(\epsilon_i)_{i = 1}^n$ is a sequence of Rademacher random variables. Now, take $S$ such that $S \subset \{1, \ldots, m \}$, where $|S| = \lfloor m/2 \rfloor $. We define \begin{flalign*}
        &Z_i \in \mathbb{R}^{(\prod_{k \in S} d_k) \times (\prod_{k \in S^c} d_k)}
    \end{flalign*} to be a flattened version of $X_i$, denoted by $\mathcal{F}(X_i)$. Let $D = \min \{\prod_{k \in S} d_k, \prod_{k \in S^c} d_k)$, so in this case, $D = d^{\lfloor m/2 \rfloor }$. We now prove that for any $p$, there exists a $d$-dependent constant $C_1$ such that \begin{flalign}
        &||X_i||_\mathrm{op} \le  ||Z_i||_{S_{2p}} \le C_1^2 \cdot D^{\frac{1}{2p}} \cdot   ||X_i||_\mathrm{op} 
    \end{flalign} We note that \begin{flalign*}
        &||X_i||_\mathrm{op} \\ 
        &= \sup_{||u^{(1)}|| = 1 \hspace{0.25em} \ldots \hspace{0.25em} ||u^{(m)}|| = 1} \langle X_i, u^{(1)} \otimes \ldots \otimes u^{(m)} \rangle \\
        &= \sup_{||a_1|| = ||b_1|| = 1} \langle Z_i, a_1b_1^T \rangle \\ 
        &\le \sup_{||a|| = ||b|| = 1} \langle Z_i, ab^T \rangle  \\ 
        &= ||Z_i||_\mathrm{op} =\sigma_{\max}(Z_i)
    \end{flalign*}  Note that above and below, we denote $\sigma_1, \ldots, \sigma_D$ to be the singular values of $Z_i$, where $\sigma_{\max}(Z_i) = \max_{j} \sigma_j$. We also used the fact that for any matrix $A$, $||A||_2 = \sigma_{\max}(A)$ and we defined \begin{flalign*}
        &a_1 = \bigotimes_{k \in S} u^{(k)}, b_1 = \bigotimes_{k \notin S} u^{(k)}
    \end{flalign*} Now \begin{flalign*}
        &\sigma_{\max}(Z_i) \le  (\sum_{j = 1}^D \sigma_j^{2p}(Z_i))^{\frac{1}{2p}} = ||Z_i||_{S_{2p}} \le  (D \cdot \sigma_{\max}^{2p}(Z_i))^{\frac{1}{2p}} = D^{\frac{1}{2p}} \cdot \sigma_{\max}(Z_i) \\ 
    \end{flalign*} Since \begin{flalign*}
        &||Z_i||_{\mathrm{op}} = \sup_{||a|| = ||b|| = 1} a^T Z_i b
    \end{flalign*} we expand $a$ and $b$ in their orthonormal bases as follows: \begin{flalign*}
        &a = \sum_{\alpha = 1}^{d^{\lfloor m/2 \rfloor }} a_{\alpha} e_{\alpha}, b = \sum_{\beta = 1}^{d^{\lceil m/2 \rceil }} b_{\beta} f_{\beta}
    \end{flalign*} which implies that \begin{flalign*}
        &|a^T Z_i b| = |\sum_{\alpha, \beta} a_{\alpha} b_{\beta} \langle X_i, e_{\alpha} \otimes f_{\beta} \rangle | \\ 
        &\le \sum_{\alpha, \beta} |a_{\alpha}| \cdot |b_{\beta}| \cdot |\langle X_i, e_{\alpha} \otimes f_{\beta} \rangle| \\ 
        &\le \sum_{\alpha, \beta} |a_{\alpha}| \cdot |b_{\beta}| \cdot ||X_i||_{\mathrm{op}}  \\
        &\le  ||X_i||_{\mathrm{op}} \cdot ||a||_1 \cdot ||b||_1 \\ 
        &\le C_1^2 \cdot ||X_i||_{\mathrm{op}} 
    \end{flalign*} due to the Cauchy-Schwarz inequality and since $||x||_1 \le C_1 \cdot ||x||_2$ for some constant $C_1$ for all $x$ such that $||x||_2 = 1$. This implies that \begin{flalign*}
        &D^{\frac{1}{2p}}\sigma_{\max}(Z_i) = D^{\frac{1}{2p}} \cdot ||Z_i||_{\mathrm{op}} \le D^{\frac{1}{2p}} \cdot C_1^2 \cdot ||X_i||_{\mathrm{op}}
    \end{flalign*} which proves the inequality. Observe that for $H_i$ such that $\mathcal{F}(H_i) = J_i$ and for all $\{\gamma_i \}$, it holds that \begin{flalign*}
        &\mathcal{F}(\sum_{i = 1}^n \gamma_i H_i) = \sum_{i = 1}^n \gamma_i\mathcal{F}(H_i) =  \sum_{i = 1}^n \gamma_i J_i
    \end{flalign*} and that $||X_i||_\mathrm{op}^2 \le ||Z_i||_{S_{2p}}^2$ it holds that \begin{flalign*}
        &\mathbb{E}_\epsilon[||\sum_{i = 1}^n \epsilon_i X_i||_\mathrm{op}^2] \le \mathbb{E}[|| \sum_{i = 1}^n \epsilon_i Z_i||_\mathrm{op}^2] \le \mathbb{E}_\epsilon[||\sum_{i = 1}^n \epsilon_i Z_i]||_{S_{2p}}^2] \le (\mathbb{E}_\epsilon[||\sum_{i = 1}^n \epsilon_i Z_i||_{S_{2p}}^{2p}])^{1/p}
    \end{flalign*} where the last inequality follows from an application of Hölder's inequality. By the Matrix-Khintchine inequality \citep{matrixconcentration}, we have that \begin{flalign*}
        &(\mathbb{E}_\epsilon[\sum_{i = 1}^n ||\epsilon_i Z_i||_{S_{2p}}^{2p}])^{1/p} \le (2p - 1) \cdot ||(\sum_{i = 1}^{n} Z_i^2)^{1/2}||_{S_{2p}}^2 = (2p - 1) \cdot ||\sum_{i = 1}^n Z_i^2||_{S_{2p}} \\ 
        &\le (2p - 1) \cdot \sum_{i = 1}^n ||Z_i||_{S_{2p}}^2 \\
        &\le (2p - 1) \cdot D^{1/p} C_1^2 \cdot \sum_{i = 1}^n ||X_i||^2_{\mathrm{op}} \\
    \end{flalign*} Taking $p = 1$, we have that \begin{flalign*}
        &\mathbb{E}_\epsilon[\sum_{i = 1}^n ||\epsilon_i Z_i||_{S_2}^2] \le DC_1^2 \cdot \sum_{i = 1}^n ||X_i||_\mathrm{op}^2 
    \end{flalign*} and when taking expectation with respect to $X$, we have that \begin{flalign*}
        &\mathbb{E}[||\sum_{i = 1}^n \epsilon_i X_i||_\mathrm{op}^2] \le DC_1^2 \cdot \sum_{i = 1}^n \mathbb{E}||X_i||_\mathrm{op}^2 \le DC_1^2\sigma^2 n \le d^{m/2} C_1^2 \sigma^2n
    \end{flalign*} Normalizing by $n^2$ gives a final bound of \begin{flalign*}
        \frac{4d^{m/2}C_1^2 \sigma^2}{n}
    \end{flalign*} which finishes the proof. 
\end{proof}

\begin{lemma}
    For all integers $p \ge 1$, for all $i \in \{1, \ldots, p \}$ and all $t \ge 1$, there exists a $d$-dependent, $n_i$-independent constant $C \ge 0$ such that \begin{flalign*}
        &\mathbb{E}||D_t^{(i)} - \nabla F^{(i)} (x^{(t)})||_{\mathrm{op}}^{\frac{p + 1}{p}} \le 2^{1/p} \cdot \left(\left(\frac{C \cdot \sigma_i^2}{n_i}\right)^{\frac{p + 1}{2p}} + B_i^{\frac{p + 1}{p}}\right) 
    \end{flalign*}
\end{lemma}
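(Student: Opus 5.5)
The plan is to split the estimation error of the minibatch estimator into a zero-mean noise part and a bias part, bound each separately, and recombine with an elementary power inequality. Write $D_t^{(i)} = \frac{1}{n_i}\sum_{j=1}^{n_i} \tilde{\nabla}^i F(x^{(t)}, z^{(t,j)}, b_i)$ as in Algorithm \ref{alg:novariancereduction}. By the oracle model, each summand equals $\nabla^i F(x^{(t)}) + \xi_i(x^{(t)}, z^{(t,j)}) + b_i(x^{(t)})$. Hence
\begin{flalign*}
&D_t^{(i)} - \nabla^i F(x^{(t)}) = \bar{\xi}_t + b_i(x^{(t)}), \qquad \bar{\xi}_t := \frac{1}{n_i}\sum_{j=1}^{n_i} \xi_i(x^{(t)}, z^{(t,j)}).
\end{flalign*}
I will work conditionally on $x^{(t)}$, i.e.\ on the $\sigma$-algebra generated by all samples drawn before iteration $t$. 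Under this conditioning, the $z^{(t,j)}$ are fresh i.i.d.\ draws, so the $\xi_i(x^{(t)}, z^{(t,j)})$ are i.i.d.\ with mean zero and second moment at most $\sigma_i^2$. The bias $b_i(x^{(t)})$ is then deterministic with $\|b_i(x^{(t)})\|_{\mathrm{op}} \le B_i$.

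\textbf{Step 1 (splitting).} Let $r = \frac{p+1}{p} \in (1, 2]$. By the triangle inequality and convexity of $s \mapsto s^{r}$,
\begin{flalign*}
&\|a + b\|^{r} \le 2^{r-1}\left(\|a\|^{r} + \|b\|^{r}\right),
\end{flalign*}
and $2^{r-1} = 2^{1/p}$, which is exactly the constant in the statement. Applying this to $a = \bar{\xi}_t$ and $b = b_i(x^{(t)})$ gives
\begin{flalign*}
&\mathbb{E}\|D_t^{(i)} - \nabla^i F(x^{(t)})\|_{\mathrm{op}}^{r} \le 2^{1/p}\left(\mathbb{E}\|\bar{\xi}_t\|_{\mathrm{op}}^{r} + B_i^{r}\right).
\end{flalign*}

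\textbf{Step 2 (noise term).} Since $r/2 \le 1$, Jensen's inequality for the concave map $s \mapsto s^{r/2}$ gives
\begin{flalign*}
&\mathbb{E}\|\bar{\xi}_t\|_{\mathrm{op}}^{r} \le \left(\mathbb{E}\|\bar{\xi}_t\|_{\mathrm{op}}^{2}\right)^{r/2}.
\end{flalign*}
I then invoke Lemma \ref{lemma:constantsC_i} with $A_j = \tilde{\nabla}^i F(x^{(t)}, z^{(t,j)}, b_i)$, whose common mean is $\nabla^i F(x^{(t)}) + b_i(x^{(t)})$. This yields
\begin{flalign*}
&\mathbb{E}\|\bar{\xi}_t\|_{\mathrm{op}}^{2} \le \frac{C\sigma_i^2}{n_i}
\end{flalign*}
for a $d$-dependent, $n_i$-independent constant $C$. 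Raising to the power $r/2 = \frac{p+1}{2p}$ produces the first term of the claimed bound. Finally, I take the outer expectation over $x^{(t)}$; since the conditional bound does not depend on $x^{(t)}$, it carries over unchanged.

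\textbf{Main obstacle.} The delicate step is justifying the application of Lemma \ref{lemma:constantsC_i} at the random point $x^{(t)}$. That requires conditional independence of the fresh samples given the past, so that the i.i.d.\ hypothesis holds conditionally. It also requires that $C$ be uniform over $t$ and $x$. Uniformity holds because the constant in Lemma \ref{lemma:constantsC_i} depends only on $d$ and the tensor order $i$, not on the distribution. Taking $C_i$ to be the maximum over orders up to $p$ gives a single constant valid for all $i$.
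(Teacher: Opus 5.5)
Your proposal is correct and follows the paper's proof essentially step for step. You split the error into the averaged noise plus the bias $b_i(x^{(t)})$, take the factor $2^{1/p}$ from convexity of $s\mapsto s^{(p+1)/p}$, then use Lyapunov/Jensen to pass to the second moment and Lemma~\ref{lemma:constantsC_i} to obtain $C\sigma_i^2/n_i$. Your explicit conditioning on the past, which makes the fresh samples at the random point $x^{(t)}$ i.i.d.\ with mean $\nabla^i F(x^{(t)})+b_i(x^{(t)})$, is a detail the paper leaves implicit, and you handle it correctly.
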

\begin{proof}
    First, we can say that \begin{flalign*}
        &\mathbb{E}[||D_t^{(i)} - \nabla F^i(x^{(t)})||^{\frac{p + 1}{p}}] \\ 
        &= \mathbb{E}[||D_t^{(i)} - \nabla F^i(x^{(t)}) - b_i(x^{(t)}) + b_i(x^{(t)})||^{\frac{p + 1}{p}}] \\ 
        &\le 2^{1/p}\cdot (\mathbb{E}[||D_t^{(i)} - \nabla F^i(x^{(t)}) - b_i(x^{(t)})||^{\frac{p + 1}{p}}] + \mathbb{E}[||b_i(x^{(t)})||^{\frac{p + 1}{p}}]) \\
        &\le 2^{1/p}\cdot (\mathbb{E}[||D_t^{(i)} - \nabla F^i(x^{(t)}) - b_i(x^{(t)})||^{\frac{p + 1}{p}}] + B_i^{\frac{p + 1}{p}}) 
    \end{flalign*} Notice that for any $r \in [1, 2]$, we can have that \begin{flalign*}
        &\mathbb{E}[||D_t^{(i)} - \nabla F^i(x^{(t)}) - b_i(x^{(t)})||_\mathrm{op}^{r}] \\  
        &= \mathbb{E}[||\frac{1}{n_i} \sum_{j = 1}^{n_i} \tilde{\nabla}^i F(x^{(t)}, z^{(t, j)}) - \nabla F^i(x^{(t)}) - b_i(x^{(t)})||_\mathrm{op}^{r}] \\ 
        &\le (\mathbb{E}[||\frac{1}{n_i} \sum_{j = 1}^{n_i} \tilde{\nabla}^i F(x^{(t)}, z^{(t, j)}) - \nabla F^i(x^{(t)}) - b_i(x^{(t)})||_\mathrm{op}^{2}])^{r/2} \\
        &\le (\frac{C \cdot \sigma_i^2}{n_i})^{r/2}  
    \end{flalign*} where we have used Lyapunov's inequality and the result from lemma \ref{lemma:constantsC_i}. Thus, we have a final bound of \begin{flalign*}
        &2^{1/p}\cdot \left(\left(\frac{C \cdot \sigma_i^2}{n_i}\right)^{\frac{p + 1}{2p}} + B_i^{\frac{p + 1}{p}}\right)
    \end{flalign*} which finishes the proof.
\end{proof}

\begin{lemma}
\label{lemma:diffxandy}
Given a function $F \in \mathcal{F}_p(\Delta, L_{1: p})$, let \begin{flalign*}
        &m_x(y) = F(x) + \langle D^{(1)}, y - x \rangle + \sum_{i = 2}^p \frac{1}{i!} D^{(i)}(x)[y - x]^i + \frac{M}{(p + 1)!} ||y - x||^{p + 1}
    \end{flalign*} and let
        $y \in \argmin_{z: ||z - x|| \le \eta} m_x(z)$ for $0 \le \eta < 1$. Then, for all $M \ge 8 L_p$, we have that \begin{flalign*}
        &F(x) - F(y) > \frac{M}{8(p + 1)!} ||y - x||^{p + 1} - (\frac{2p!}{M})^{\frac{1}{p }} \cdot ||\nabla F(x) - D^{(1)}(x)||^{\frac{p + 1}{p}} \\ 
        &- \frac{1}{2} \sum_{i = 2}^p (\frac{2p \cdot p!}{M})^{\frac{1}{p }} \cdot ||\nabla^i F(x) - D^i(x)||_{\mathrm{op}}^{\frac{p + 1}{p}} \cdot \eta^{\frac{p + 1}{p}}
    \end{flalign*}
\end{lemma}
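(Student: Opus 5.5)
The plan is to compare $F(y)$ with the exact Taylor model and use the optimality of $y$ for $m_x$ over the ball. Write $r = \|y-x\|\le \eta<1$. Let $\Phi_x(y) = F(x) + \sum_{i=1}^p \frac{1}{i!}\nabla^i F(x)[y-x]^i$ be the exact $p$th-order Taylor polynomial. Since $\nabla^p F$ is $L_p$-Lipschitz, we have $|F(y) - \Phi_x(y)| \le \frac{L_p}{(p+1)!} r^{p+1}$.

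Next, write $F(y) \le m_x(y) + (\Phi_x(y) - \hat\Phi_x(y)) + \frac{L_p - M}{(p+1)!} r^{p+1}$, where $\hat\Phi_x$ is the model built from the estimates $D^{(i)}$. The middle term is bounded by
\[
\|\nabla F(x) - D^{(1)}\|\, r + \sum_{i=2}^p \frac{1}{i!}\|\nabla^i F(x) - D^{(i)}\|_{\mathrm{op}}\, r^i .
\]

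Then I need a lower bound on the decrease $F(x) - m_x(y)$. Plain comparison with $z = x$ gives only $m_x(y) \le F(x)$, which is not enough by itself. With $M \ge 8L_p$, however, we have $\frac{M - L_p}{(p+1)!} \ge \frac{7M}{8(p+1)!}$. Hence
\[
F(x) - F(y) \ge \frac{7M}{8(p+1)!} r^{p+1} - \|\nabla F(x) - D^{(1)}\|\, r - \sum_{i\ge 2} \frac{1}{i!}\|\nabla^i F(x) - D^{(i)}\|\, r^i .
\]
This already has the desired shape, and the remaining task is to absorb the error terms into part of the regularizer.

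For the first-order error I use Young's inequality with exponents $p+1$ and $\frac{p+1}{p}$:
\[
g\, r \le \frac{\theta}{p+1} r^{p+1} + \frac{p}{p+1}\theta^{-1/p} g^{\frac{p+1}{p}} .
\]
I choose $\theta$ proportional to $M/p!$, say $\theta = \frac{M}{2 p!}$, so that $\frac{\theta}{p+1} r^{p+1}$ uses up at most a fixed fraction (for example $\frac{M}{2(p+1)!}$, later tuned) of the $r^{p+1}$ budget. The resulting coefficient $\frac{p}{p+1}(2p!/M)^{1/p} \le (2p!/M)^{1/p}$ matches the claimed constant.

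For the higher-order errors ($i \ge 2$), I first use $r \le \eta < 1$ to write $r^i \le r \cdot \eta^{i-1}$, or more directly to split $r^i = r\cdot r^{i-1}$. I then apply the same Young inequality with $\theta$ proportional to $M/(p\, p!)$, because there are $p-1$ such terms that must share the budget. This yields terms of the form $(2p\,p!/M)^{1/p}\, g_i^{(p+1)/p}$ times a power of $\eta$. The claimed factor $\eta^{(p+1)/p}$ comes from bounding the leftover powers of $r$ by $\eta$, using $\eta<1$ and $i \ge 2$. The $\frac12$ and the $1/i!$ factors are simply dropped as $\le 1$.

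Collecting terms, the $r^{p+1}$ coefficient should be at least $\frac{7M}{8} - \frac{M}{2} - \frac{M}{4}$ divided by $(p+1)!$, which gives $\frac{M}{8(p+1)!}$. The strict inequality follows either from the slack in $\frac{p}{p+1} < 1$ or from the strict margin in $M \ge 8L_p$.

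I expect the main obstacle to be the bookkeeping for the $i \ge 2$ terms: the available budget has to be split among the $p-1$ terms, and the exponents of $\eta$ must be matched exactly to $\eta^{(p+1)/p}$. If a direct Young inequality with exponents $\bigl(\frac{p+1}{i}, \frac{p+1}{p+1-i}\bigr)$ on $g_i r^i$ produces the wrong power of $g_i$, I will fall back on the reduction $r^i \le \eta^{i-1} r$. That reduction brings every term to the first-order form, so the same exponent pair $\bigl(p+1, \frac{p+1}{p}\bigr)$ applies, at the cost of some power of $\eta<1$ that can then be weakened to $\eta^{(p+1)/p}$.
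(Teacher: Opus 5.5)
Your argument follows the paper's proof essentially step for step. Both use the $L_p$-Taylor bound, $m_x(y)\le m_x(x)=F(x)$, and $\frac{M-L_p}{(p+1)!}\ge\frac{7M}{8(p+1)!}$. Both then apply Young's inequality with exponents $(p+1,\frac{p+1}{p})$, with $\theta=\frac{M}{2\,p!}$ for the gradient error and $\theta=\frac{M}{2p\cdot p!}$ for each $i\ge 2$ term. For the $i\ge 2$ terms you split off one factor of $r$ and bound the leftover powers of $r$ by $\eta$; the paper first bounds $r^{i-1}\le r$, which amounts to the same thing.

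\textbf{The $1/i!$ weights.} One bookkeeping step fails as you wrote it: the weights $\frac{1}{i!}$ cannot be dropped as $\le 1$. You need $\frac{1}{i!}\le\frac12$ for $i\ge 2$. That bound is exactly where the $\frac12$ in front of the sum in the statement comes from, since the factor $\frac{p}{p+1}\ge\frac23$ cannot supply it. It is also what limits the regularizer consumed by the $i\ge 2$ terms to $\frac{(p-1)M}{4p\,(p+1)!}<\frac{M}{4(p+1)!}$, which your count $\frac78-\frac12-\frac14=\frac18$ presupposes. With $\frac{1}{i!}\le 1$ and your $\theta$, these terms consume up to $\frac{(p-1)M}{2p\,(p+1)!}$. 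Then $\frac78-\frac12-\frac{p-1}{2p}$ is below $\frac18$ for $p\ge 3$ and nonpositive for $p\ge 4$.

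\textbf{The strict inequality.} Neither of your justifications works in general: $M\ge 8L_p$ gives no strict margin, and the slack $\frac{p}{p+1}<1$ only helps when some error is nonzero. In fact strictness is false in general. Take $p=2$, $F(x)=\frac12\|x\|^2$, $x=0$ and exact derivative estimates; then $y=x$ and both sides equal $0$. Only the non-strict version holds. The paper's proof makes the same unjustified strict claim, but only $\ge$ is used in the later lemmas.
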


\begin{proof}
    We have that $F(y) - F(x)$ \begin{flalign*} 
        &\le F(x) + \langle \nabla F(x), y - x \rangle + \sum_{i = 2}^{p} \frac{1}{i!} \nabla^iF(x) [y - x]^i + \frac{L_p}{(p + 1)!} ||y - x||^{p + 1} - F(x) \\ 
        &\le m_x(y) + \langle \nabla F(x) - D^{(1)}(x), y - x \rangle + \sum_{i = 2}^{p} \frac{1}{i!}(\nabla^i F(x) - D^{(i)}(x))[y - x]^i + \frac{L_p - M}{(p + 1)!} ||y - x||^{p + 1} - m_x(x) \\
        &\le \langle \nabla F(x) - g, y - x \rangle + \sum_{i = 2}^{p} \frac{1}{i!}(\nabla^i F(x) - D^{(i)}(x))[y - x]^i + \frac{L_p - M}{(p + 1)!} ||y - x||^{p + 1} \\ 
        &\le  -\frac{7M}{8(p + 1)!}||y - x||^{p + 1} + ||\nabla F(x) - g|| \cdot ||y - x|| \\ 
        &+ \sum_{i = 2}^{p} \frac{1}{i!} ||\nabla^i F(x)[y - x, :, \ldots, :] - D^{(i)}(x)[y - x, :, \ldots, :] ||_\mathrm{op} \cdot ||y - x||
    \end{flalign*} since $||y - x|| \le \eta$ and since $\eta < 1$, we have that $||y - x||^{i - 1} \le ||y - x||$ for $i \ge 2$. By Young's inequality, we have that \begin{flalign*}
        &||\nabla F(x) - D^{(1)}(x)|| \cdot ||y - x|| \le ((\frac{2p!}{M})^{\frac{1}{p}} \cdot ||\nabla F(x) - D^{(1)}(x)||^{\frac{p + 1}{p}} \cdot \frac{p}{p + 1}) + (\frac{||y - x||^{p + 1}}{(p + 1)} \cdot \frac{M}{2 p!}) \\
        &= (\frac{2p!}{M})^{\frac{1}{p}}  (\frac{p \cdot ||\nabla F(x) - D^{(1)}(x)||^{\frac{p + 1}{p}}}{p + 1}) + \frac{M ||y - x||^{p + 1}}{2 (p + 1)!}
    \end{flalign*} and \begin{flalign*}
        &||\nabla^i F(x)[y - x, :, \ldots, :] - D^{(i)}(x)[y - x, :, \ldots, :] ||_\mathrm{op} \cdot ||y - x|| \\ 
        &\le (\frac{2p \cdot p!}{M})^{\frac{1}{p}}  \frac{p  \cdot ||\nabla^i F(x)[y - x, :, \ldots, :] - D^{(i)}(x)[y - x, :, \ldots, :]||_\mathrm{op}^{\frac{p + 1}{p}}}{p + 1} + \frac{M  ||y - x||^{p + 1}}{(p + 1) \cdot (2p \cdot p!)} \\ 
        &= (\frac{2p \cdot p!}{M})^{\frac{1}{p }}  \frac{p  \cdot ||\nabla^i F(x)[y - x, :, \ldots, :] - D^{(i)}(x)[y - x, :, \ldots, :]||_\mathrm{op}^{\frac{p + 1}{p}}}{p + 1} + \frac{M  ||y - x||^{p + 1}}{2p \cdot (p + 1)!}
    \end{flalign*} which implies that \begin{flalign*}
        &-\frac{7M}{8(p + 1)!}||y - x||^{p + 1} + ||\nabla F(x) - D^{(1)}(x)|| \cdot ||y - x|| \\ 
        &+ \sum_{i = 2}^{p} \frac{1}{i!} ||\nabla^i F(x)[y - x, :, \ldots, :] - D^{(i)}(x)[y - x, :, \ldots, :] ||_\mathrm{op} \cdot ||y - x|| \\ 
        &\le -\frac{7M}{8(p + 1)!}||y - x||^{p + 1} + (\frac{2p!}{M})^{\frac{1}{p}} \cdot \frac{p}{p + 1} \cdot ||\nabla F(x) - g||^{\frac{p + 1}{p}} + \frac{M ||y - x||^{p + 1}}{2 (p + 1)!} \\ 
        &+ \sum_{i = 2}^{p} \frac{1}{i!} \cdot [(\frac{2p \cdot p!}{M})^{\frac{1}{p}}  \frac{p  \cdot ||\nabla^i F(x)[y - x, :, \ldots, :] - D^{(i)}(x)[y - x, :, \ldots, :]||_\mathrm{op}^{\frac{p + 1}{p}}}{p + 1} + \frac{M  ||y - x||^{p + 1}}{2p \cdot (p + 1)!}] \\
        &\le -\frac{3M}{8(p + 1)!}||y - x||^{p + 1} + (\frac{2p!}{M})^{\frac{1}{p }} \cdot \frac{p}{p + 1} \cdot ||\nabla F(x) - D^{(1)}(x)||^{\frac{p + 1}{p}} \\  
        &+ \sum_{i = 2}^{p} \frac{1}{i!}[(\frac{2p \cdot p!}{M})^{\frac{1}{p }}  \frac{p \cdot ||\nabla^i F(x) - D^{(i)}(x)||_\mathrm{op}^{\frac{p + 1}{p}} \cdot ||y - x||^{\frac{p + 1}{p}}}{p + 1} + \frac{M \cdot ||y - x||^{p + 1}}{2p \cdot (p + 1)!}] \\
        &< -\frac{3M}{8(p + 1)!}||y - x||^{p + 1} + (\frac{2p!}{M})^{\frac{1}{p }} \cdot \frac{p}{p + 1} \cdot ||\nabla F(x) - D^{(1)}(x)||^{\frac{p + 1}{p}} \\ 
        &+ \sum_{i = 2}^{p} \frac{1}{2}[(\frac{2p \cdot p!}{M})^{\frac{1}{p }}  \frac{p \cdot ||\nabla^i F(x) - D^{(i)}(x)||^{\frac{p + 1}{p}} \cdot ||y - x||^{\frac{p + 1}{p}}}{p + 1} + \frac{M \cdot ||y - x||^{p + 1}}{2p \cdot (p + 1)!}] \\ 
    \end{flalign*} We can further bound this expression by \begin{flalign*}
        &< -\frac{3M}{8(p + 1)!}||y - x||^{p + 1} + (\frac{2p!}{M})^{\frac{1}{p }} \cdot \frac{p}{p + 1} \cdot ||\nabla F(x) - D^{(1)}(x)||^{\frac{p + 1}{p}} \\
        &+ \frac{1}{2}\sum_{i = 2}^{p} [(\frac{2p \cdot p!}{M})^{\frac{1}{p }}  \cdot ||\nabla^i F(x) - D^{(i)}(x)||_\mathrm{op}^{\frac{p + 1}{p}} \cdot ||y - x||^{\frac{p + 1}{p}} + \frac{M \cdot ||y - x||^{p + 1}}{2p \cdot (p + 1)!}] \\
        &< -\frac{M}{8(p + 1)!}||y - x||^{p + 1} + (\frac{2p!}{M})^{\frac{1}{p }} \cdot \frac{p}{p + 1} \cdot ||\nabla F(x) - D^{(1)}(x)||^{\frac{p + 1}{p}} \\ 
        &+ \frac{1}{2}\sum_{i = 2}^{p} (\frac{2p \cdot p!}{M})^{\frac{1}{p }} \cdot ||\nabla^i F(x) - D^{(i)}(x)||_\mathrm{op}^{\frac{p + 1}{p}} \cdot \eta^{\frac{p + 1}{p}} \\ 
        &< -\frac{M}{8(p + 1)!}||y - x||^{p + 1} + (\frac{2p!}{M})^{\frac{1}{p }} \cdot ||\nabla F(x) - D^{(1)}(x)||^{\frac{p + 1}{p}} + \frac{1}{2}\sum_{i = 2}^{p} (\frac{2p \cdot p!}{M})^{\frac{1}{p }} \cdot ||\nabla^i F(x) - D^{(i)}(x)||_\mathrm{op}^{\frac{p + 1}{p}} \cdot \eta^{\frac{p + 1}{p}}
    \end{flalign*} which finishes the proof. 
\end{proof}
\newpage 
\begin{lemma}
    Given a function $F \in \mathcal{F}_p(\Delta, L_{1: p})$, let $y \in \argmin_{z: ||z - x|| \le \eta}$ $m_x(z)$, where \begin{flalign*}
        &m_x(y) = F(x) + \langle D^{(1)}, y - x \rangle + \sum_{i = 2}^p \frac{1}{i!} D^{(i)}[y - x]^i + \frac{M}{(p + 1)!} ||y - x||^{p + 1}
    \end{flalign*} for $M \ge 8L_p$ and $0 \le \eta < 1$. It holds that: 
     \begin{flalign*}
        &\boldsymbol{1}[||\nabla F(y)|| \ge \frac{9M}{8p!} \eta^p] \le \frac{1}{\eta^p}||y - x||^p + \frac{p!}{M \eta^p}(||\nabla F(x) - D^{(1)}(x)|| + \sum_{i = 1}^{p - 1} \frac{1}{i!} ||\nabla^{i + 1} F(x) - D^{(i + 1)}(x)||_\mathrm{op}\cdot \eta^i)
    \end{flalign*}
\end{lemma}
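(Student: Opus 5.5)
The plan is to bound $\|\nabla F(y)\|$ by combining the first-order optimality condition of the trust-region subproblem with the Taylor remainder bound for $\nabla F$. The indicator bound then follows from a two-case split.

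\textbf{Step 1: optimality conditions.} Since $y$ minimizes $m_x$ over the ball $\{z:\|z-x\|\le\eta\}$, there is a multiplier $\lambda\ge 0$ with $\lambda(\|y-x\|-\eta)=0$ such that
\[
\sum_{i=1}^p \frac{1}{(i-1)!} D^{(i)}[y-x]^{i-1} + \frac{M}{p!}\|y-x\|^{p-1}(y-x) + \lambda (y-x) = 0 .
\]

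\textbf{Step 2: Taylor remainder.} By $L_p$-Lipschitzness of $\nabla^p F$,
\[
\Big\|\nabla F(y) - \sum_{i=1}^p \frac{1}{(i-1)!}\nabla^i F(x)[y-x]^{i-1}\Big\| \le \frac{L_p}{p!}\|y-x\|^p .
\]
Adding and subtracting the model gradient and using the triangle inequality gives
\[
\|\nabla F(y)\| \le \frac{L_p+M}{p!}\|y-x\|^p + \lambda\|y-x\| + \|\nabla F(x)-D^{(1)}\| + \sum_{i=1}^{p-1}\frac{1}{i!}\|\nabla^{i+1}F(x)-D^{(i+1)}\|_{\mathrm{op}}\,\eta^{i}.
\]
Here $\|y-x\|^{i}\le\eta^i$ is used on the error terms.

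\textbf{Step 3: case split.} If $\|y-x\|<\eta$, then $\lambda=0$. Using $L_p\le M/8$, we get
\[
\|\nabla F(y)\|\le \frac{9M}{8p!}\|y-x\|^p + E,
\]
where $E$ collects the derivative-error terms. On the event $\|\nabla F(y)\|\ge \frac{9M}{8p!}\eta^p$, dividing by $\frac{9M}{8p!}\eta^p$ gives
\[
1\le \frac{\|y-x\|^p}{\eta^p} + \frac{8p!}{9M\eta^p}E \le \frac{\|y-x\|^p}{\eta^p} + \frac{p!}{M\eta^p}E,
\]
which is the claim. If instead $\|y-x\|=\eta$, the right-hand side is already at least $\|y-x\|^p/\eta^p=1$, so the indicator inequality holds trivially.

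\textbf{Main obstacle.} The delicate point is avoiding any bound on $\lambda$ itself. I would handle this entirely through the case split above: on the boundary the first term alone equals $1$, and in the interior complementary slackness forces $\lambda=0$. Apart from this, the only care needed is tracking constants so that $\frac{L_p+M}{p!}\le\frac{9M}{8p!}$, which holds since $M\ge 8L_p$.
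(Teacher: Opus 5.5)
Your proof is correct and follows the same route as the paper: the Taylor remainder for $\nabla F$, the subproblem's first-order condition bounding the model gradient by $\frac{M}{p!}\|y-x\|^p$, and $L_p\le M/8$. The only cosmetic difference is that you obtain the indicator bound by dividing on the event, whereas the paper uses the device $a\boldsymbol{1}[b\ge a]\le\min\{a,b\}$. Your KKT/case split is also an improvement: the paper applies the unconstrained stationarity condition $\nabla m_x(y)=0$ to the trust-region minimizer without separating out $\|y-x\|=\eta$, and your observations that the boundary case is trivial and that $\lambda=0$ in the interior are exactly what make that step rigorous.
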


\begin{proof}
    We have that \begin{flalign*}
        &||\nabla F(y)|| \\  
        &\le ||\nabla F(y) - \sum_{i = 0}^{p - 1} \frac{1}{i!} \nabla^{i + 1} F(x)[y - x]^i|| + ||\sum_{i = 0}^{p - 1} \frac{1}{i!} \nabla^{i + 1}F(x)[y - x]^i|| \\ 
        &\le \frac{L_p}{p!} ||y - x||^p + ||\nabla F(x) + \sum_{i = 1}^{p - 1} \frac{1}{i!} \nabla^{i + 1}F(x)[y - x]^i|| \\ 
        &\le \frac{L_p}{p!} ||y - x||^p + ||\nabla F(x) - D^{(1)}(x)|| + ||\sum_{i = 1}^{p - 1} \frac{1}{i!} [\nabla^{i + 1} F(x)[y - x]^i - D^{(i + 1)}(x)[y - x]^i]|| \\
        &+ ||D^{(1)}(x) + \sum_{i = 1}^{p - 1} \frac{1}{i!} D^{(i + 1)}[y - x]^i|| \\
        &\le \frac{L_p}{p!} ||y - x||^p + ||\nabla F(x) - D^{(1)}(x)|| + \sum_{i = 1}^{p - 1} \frac{1}{i!} ||\nabla^{i + 1} F(x) - D^{(i + 1)}(x)|| \cdot ||y - x||^i \\ 
        &+ ||D^{(1)}(x) + \sum_{i = 1}^{p - 1} \frac{1}{i!} D^{(i + 1)}[y - x]^i|| \\
        &\le \frac{L_p + M}{p!} ||y - x||^p + ||\nabla F(x) - D^{(1)}(x)|| + \sum_{i = 1}^{p - 1} \frac{1}{i!} ||\nabla^{i + 1} F(x) - D^{(i + 1)}(x)|| \cdot ||y - x||^i \\ 
        &\le \frac{L_p + M}{p!} ||y - x||^p + ||\nabla F(x) - D^{(1)}(x)|| + \sum_{i = 1}^{p - 1} \frac{1}{i!} ||\nabla^{i + 1} F(x) - D^{(i + 1)}(x)|| \cdot \eta^i
    \end{flalign*} since under first order optimality conditions for $y \in \argmin_{z} m_x(z)$, we have that   \begin{flalign*}
        &D^{(1)}(x) + \sum_{i = 1}^{p - 1} \frac{1}{i!} D^{(i + 1)}[y - x]^i + \frac{M}{p!}||y - x||^{p + 1}(x - y)= 0
    \end{flalign*} We now have that \begin{flalign*}
        &||y - x||^p \ge \frac{p!}{L_p + M}(||\nabla F(y)|| - ||\nabla F(x) - D^{(1)}(x)|| - \sum_{i = 1}^{p - 1} \frac{1}{i!} ||\nabla^{i + 1} F(x) - D^{(i + 1)}(x)|| \cdot \eta^i) \\  
        &\ge \min \{\eta^p, \frac{p!}{L_p + M}(||\nabla F(y)|| - ||\nabla F(x) - D^{(1)}(x)|| - \sum_{i = 1}^{p - 1} \frac{1}{i!} ||\nabla^{i + 1} F(x) - D^{(i + 1)}(x)|| \cdot \eta^i)) \} \\
        &\ge \min \{\eta^p, \frac{p!}{L_p + M} ||\nabla F(y)|| \}  - \frac{p!}{L_p + M} ||\nabla F(x) - D^{(1)}(x)|| - \frac{p!}{L_p + M} \sum_{i = 1}^{p - 1} \frac{1}{i!} ||\nabla^{i + 1} F(x) - D^{(i + 1)}(x)|| \cdot \eta^i
    \end{flalign*} and since $L_p \le \frac{M}{8}$ and $\frac{M}{L_p + M} < 1$, we have that \begin{flalign*}
        &M||y - x||^p \ge \min \{M\eta^p, \frac{Mp!}{L_p + M} ||\nabla F(y)|| \}  - \frac{Mp!}{L_p + M} ||\nabla F(x) - D^{(1)}(x)|| \\ 
        &- \frac{Mp!}{L_p + M} \sum_{i = 1}^{p - 1} \frac{1}{i!} ||\nabla^{i + 1} F(x) - D^{(i + 1)}(x)|| \cdot \eta^i \\ 
        &> \min \{M\eta^p, \frac{8p!}{9} ||\nabla F(y)|| \} - p!(||\nabla F(x) - D^{(1)}(x)|| + \sum_{i = 1}^{p - 1} \frac{1}{i!} ||\nabla^{i + 1} F(x) - D^{(i + 1)}(x)|| \cdot \eta^i)
    \end{flalign*} which means that \begin{flalign*} 
        &\min \{M\eta^p, \frac{8p!}{9} ||\nabla F(y)|| \} < M||y - x||^p + p!(||\nabla F(x) - D^{(1)}(x)|| + \sum_{i = 1}^{p - 1} \frac{1}{i!} ||\nabla^{i + 1} F(x) - D^{(i + 1)}(x)|| \cdot \eta^i)
    \end{flalign*} which then implies that (since for all $a, b \ge 0$, $a \boldsymbol{1}[b \ge a ] \le \min \{a, b \})$ 
    \begin{flalign*}
        &M \eta^p \cdot \boldsymbol{1}[||\nabla F(y)|| \ge \frac{9M}{8p!} \eta^p] \le M||y - x||^p + p!(||\nabla F(x) - D^{(1)}(x)|| + \sum_{i = 1}^{p - 1} \frac{1}{i!} ||\nabla^{i + 1} F(x) - D^{(i + 1)}(x)|| \cdot \eta^i) \\ 
        &\implies \boldsymbol{1}[||\nabla F(y)|| \ge \frac{9M}{8p!} \eta^p] \le \frac{1}{\eta^p}||y - x||^p + \frac{p!}{M \eta^p}(||\nabla F(x) - D^{(1)}(x)|| + \sum_{i = 1}^{p - 1} \frac{1}{i!} ||\nabla^{i + 1} F(x) - D^{(i + 1)}(x)|| \cdot \eta^i) 
     \end{flalign*} which proves the lemma. 
\end{proof}

\begin{lemma}
    \label{lemma:randomvarderivatives}
    We now consider the setting where the derivative estimates $D^i$ are random variables. For all $F \in \mathcal{F}_p(\Delta, L_{1: p})$, it holds that \begin{flalign*}
        &\mathbb{E}[F(x) - F(y)] \ge  \frac{M \eta^{p + 1}}{16(p + 1)!} \Pr(||\nabla F(y)|| \ge \frac{9M}{8p!} \eta^p) - [\frac{(p!)^{\frac{p + 1}{p}}}{4 \sqrt[p]{M}} + 2(\frac{2p!}{M})^{\frac{1}{p + 1}}] \cdot [(\frac{\sigma_1^2}{n_1})^{\frac{p + 1}{2p}} + B_1^{\frac{p + 1}{2p}}] \\ 
        &- \eta^{\frac{p + 1}{p}}(\frac{(p!)^{\frac{p + 1}{p}}}{4 \sqrt[p]{M}} +  (\frac{2p \cdot p!}{M})^{\frac{1}{p + 1}}) \sum_{i = 2}^p [(\frac{\sigma_i^2}{n_i})^{\frac{p + 1}{2p}} + B_i^{\frac{p + 1}{2p}}]
    \end{flalign*}
\end{lemma}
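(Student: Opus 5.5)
The plan is to combine Lemma~\ref{lemma:diffxandy} with the indicator bound of the preceding lemma. Both hold pointwise for every realization of the estimates $D^{(i)}$. I then take expectations and control the error moments with the $\{C_i\}$ bound from the lemma after Lemma~\ref{lemma:constantsC_i}.

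\textbf{Step 1: lower-bound the step-length term.} Lemma~\ref{lemma:diffxandy} gives
\[
F(x)-F(y) > \tfrac{M}{8(p+1)!}\|y-x\|^{p+1} - (\ldots)\,\|\nabla F(x)-D^{(1)}\|^{\frac{p+1}{p}} - (\ldots).
\]
I replace $\|y-x\|^{p+1}$ by a lower bound in terms of the indicator. The previous lemma gives
\[
\eta^p\,\boldsymbol{1}[\|\nabla F(y)\|\ge \tfrac{9M}{8p!}\eta^p] \le \|y-x\|^p + \tfrac{p!}{M}\big(e_1+\textstyle\sum_{i\ge1}\tfrac{1}{i!}e_{i+1}\eta^i\big),
\]
where $e_i=\|\nabla^iF(x)-D^{(i)}\|$.

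\textbf{Step 2: pass from the $p$-th power to the $(p+1)$-th power.} Raise both sides to the power $\frac{p+1}{p}$. Since $\boldsymbol{1}^{(p+1)/p}=\boldsymbol{1}$ and $(a+b)^{r}\le 2^{r-1}(a^r+b^r)$ for $r=\frac{p+1}{p}\le 2$, this gives
\[
\tfrac12\eta^{p+1}\boldsymbol{1} \le \|y-x\|^{p+1} + \big(\tfrac{p!}{M}\big)^{\frac{p+1}{p}}(\cdots)^{\frac{p+1}{p}}.
\]
Split the error sum once more with the same convexity inequality; $\eta<1$ keeps the powers $\eta^{i(p+1)/p}\le\eta^{(p+1)/p}$ in line with the stated form. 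Multiplying by $\frac{M}{8(p+1)!}$ gives the leading $\frac{M\eta^{p+1}}{16(p+1)!}\boldsymbol{1}$. The leftover error terms carry coefficients of order $\frac{(p!)^{(p+1)/p}}{M^{1/p}}$ up to constants, which is the source of the $\frac{(p!)^{(p+1)/p}}{4\sqrt[p]{M}}$ pieces.

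\textbf{Step 3: take expectations.} The indicator becomes $\Pr(\|\nabla F(y)\|\ge\frac{9M}{8p!}\eta^p)$. Each $\mathbb{E}\,e_i^{(p+1)/p}$ is bounded by Lyapunov's inequality plus the $C_i$ moment bound, $2^{1/p}\big((C_i\sigma_i^2/n_i)^{\frac{p+1}{2p}}+B_i^{\frac{p+1}{p}}\big)$. I absorb $C_i$ and $2^{1/p}$ into the displayed numeric prefactors, which is where the factors $2$ and $4$ in front come from. Collecting the $e_1$ terms and the $e_{i\ge2}$ terms (the latter with the $\eta^{(p+1)/p}$ factor) yields the two subtracted groups.

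\textbf{Main obstacle.} The hard part is bookkeeping the constants so they land exactly on the stated coefficients. Two mismatches need attention. First, the stated exponents $\frac{1}{p+1}$ on $\frac{2p!}{M}$ and $B_i^{\frac{p+1}{2p}}$ differ from what the derivation naturally gives, namely $\frac1p$ and $B_i^{\frac{p+1}{p}}$. Second, the bias term is not reduced by averaging. I would first try proving the version with the natural exponents. I would then note that for $M\ge 8L_p$ large and $B_i\le1$, the stated form follows by monotonicity, since $B^{\frac{p+1}{p}}\le B^{\frac{p+1}{2p}}$ when $B\le1$. If that fails for $B_i>1$, I would state the lemma with $B_i^{\frac{p+1}{p}}$, which is the form actually used downstream.
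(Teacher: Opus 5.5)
Your argument is the paper's proof. You raise the indicator bound to the power $\frac{p+1}{p}$ using $2^{1/p}$, and split the error sum with $p^{1/p}<2$. You use $\eta<1$ to replace $\eta^{i(p+1)/p}$ by $\eta^{(p+1)/p}$, multiply by $\frac{M}{8(p+1)!}$, and add the result to Lemma~\ref{lemma:diffxandy}. You then take expectations and apply the moment bound together with $2^{1/p}\le 2$. That last factor is exactly where the $2$ and the $4$ in the statement come from.

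Your diagnosis of the mismatches is correct and more careful than the paper's own proof. That proof ends with $(\frac{2p!}{M})^{1/p}$ and $(\frac{2p\cdot p!}{M})^{1/p}$, not the exponent $\frac{1}{p+1}$ in the statement. It also replaces the $B_i^{\frac{p+1}{p}}$ given by the moment lemma with $B_i^{\frac{p+1}{2p}}$ without comment. That replacement is valid only when $B_i\le 1$. Likewise, the $\frac{1}{p+1}$ exponent is a weakening only when $M\ge 2p\cdot p!$, which $M\ge 8L_p$ does not guarantee.

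One step of yours is wrong: you cannot absorb $C_i$ into the numeric prefactors. $C_i$ is the $d$-dependent constant from Lemma~\ref{lemma:constantsC_i} (it has the form $4d^{m/2}C_1^2$ there), not a universal constant bounded by $1$. It enters as $C_i^{\frac{p+1}{2p}}$. The paper's proof keeps $(\frac{C_i\sigma_i^2}{n_i})^{\frac{p+1}{2p}}$ in its final line, and Lemma~\ref{lemma:gradprobbound} uses that form. So the missing $C_i$ in the statement is a typo to correct, not something absorbable.

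Your remark that $B_i^{\frac{p+1}{p}}$ is "the form used downstream" is also inaccurate. Lemma~\ref{lemma:gradprobbound} and the choices of $n_i$ in Algorithm~\ref{alg:novariancereduction} are written with $B_i^{\frac{p+1}{2p}}$. If you state the honest version with $B_i^{\frac{p+1}{p}}$ (and $C_i$, and exponent $\frac1p$), those downstream thresholds have to be updated to match.
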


\begin{proof}
    First, we note that 
    \begin{flalign*}
        &\boldsymbol{1}[||\nabla F(y)|| \ge \frac{9M}{8p!} \eta^p] \\  
        &\le (\frac{1}{\eta^p}||y - x||^p + \frac{p!}{M \eta^p}(||\nabla F(x) - g|| + \sum_{i = 1}^{p - 1} \frac{1}{i!} ||\nabla^{i + 1} F(x) - D^{i + 1}(x)|| \cdot \eta^i))^{\frac{p + 1}{p}} \\ 
        &\le \frac{2^{1/p}}{\eta^{p + 1}} ||y - x||^{p + 1} + 2^{1/p} \cdot (\frac{p!}{M \eta^p})^{\frac{p + 1}{p}} \cdot (\sum_{i = 0}^{p - 1} \frac{1}{i!} ||\nabla^{i + 1} F(x) - D^{i + 1}(x)|| \cdot \eta^i)^{\frac{p + 1}{p}} \\
        &< \frac{2}{\eta^{p + 1}} ||y - x||^{p + 1} + \frac{2 (p!)^{\frac{p + 1}{p}}}{M^{\frac{p + 1}{p}} \eta^{p + 1}} \cdot (\sum_{i = 0}^{p - 1} ||\nabla^{i + 1} F(x) - D^{i + 1}(x)|| \cdot \eta^i)^{\frac{p + 1}{p}}
    \end{flalign*} where we used the fact that for any $a_i \ge 0$, we have that \begin{flalign*}
        &(\sum_{i = 1}^n a_i)^{\frac{p + 1}{p}} \le n^{\frac{1}{p}} \sum_{i = 1}^n a_i^{\frac{p + 1}{p}}
    \end{flalign*} which follows from an application of Hölder's inequality. We can now continue to bound the above expression by \begin{flalign*}
        &\frac{2}{\eta^{p + 1}} ||y - x||^{p + 1} + \frac{2 (p!)^{\frac{p + 1}{p}}}{M^{\frac{p + 1}{p}} \eta^{p + 1}} \cdot p^{1/p} \cdot \sum_{i = 0}^{p - 1} ||\nabla^{i + 1} F(x) - D^{i + 1}(x)||^{\frac{p + 1}{p}} \cdot \eta^{\frac{i(p + 1)}{p}} \\ 
        &< \frac{2}{\eta^{p + 1}} ||y - x||^{p + 1} + \frac{4 (p!)^{\frac{p + 1}{p}}}{M^{\frac{p + 1}{p}} \eta^{p + 1}} \cdot \sum_{i = 0}^{p - 1} ||\nabla^{i + 1} F(x) - D^{i + 1}(x)||^{\frac{p + 1}{p}} \cdot \eta^{\frac{i(p + 1)}{p}} 
    \end{flalign*} where we used the fact that for all $p \ge 1$, $p^{1/p} < 2$. Taking expectations on each side, we have that \begin{flalign*}
        &\mathbb{E}[||y - x||^{p + 1}] \ge \frac{\eta^{p + 1}}{2}\Pr(||\nabla F(y)|| \ge \frac{9M}{8p!} \eta^p) - \frac{2 (p!)^{\frac{p + 1}{p}}}{M^{\frac{p + 1}{p}} } \cdot \sum_{i = 0}^{p - 1} \mathbb{E}[||\nabla^{i + 1} F(x) - D^{i + 1}(x)||^{\frac{p + 1}{p}} \cdot \eta^{\frac{i(p + 1)}{p}}]
    \end{flalign*} We also have that \begin{flalign*}
        &\mathbb{E}[F(x) - F(y)] \\ 
        &> \frac{M}{8(p + 1)!} ||y - x||^{p + 1} - (\frac{2p!}{M})^{\frac{1}{p }} \cdot ||\nabla F(x) - D^{(1)}(x)||^{\frac{p + 1}{p}} \\
        &- \frac{1}{2} \sum_{i = 2}^p (\frac{2p \cdot p!}{M})^{\frac{1}{p }} \cdot ||\nabla^i F(x) - D^i(x)||_{\mathrm{op}}^{\frac{p + 1}{p}} \cdot \eta^{\frac{p + 1}{p}} \\
        &\ge \frac{M \eta^{p + 1}}{16(p + 1)!} \Pr(||\nabla F(y)|| \ge \frac{9M}{8p!} \eta^p) - \frac{2(p!)^{\frac{p + 1}{p}}}{\sqrt[p]{M} \cdot 8(p + 1)!} \cdot \sum_{i = 0}^{p - 1} \mathbb{E}[||\nabla^{i + 1} F(x) - D^{i + 1}(x)||^{\frac{p + 1}{p}} \cdot \eta^{\frac{i(p + 1)}{p}}] \\ 
        &- (\frac{2p!}{M})^{\frac{1}{p}} \cdot \mathbb{E}[||\nabla F(x) - D^{(1)}(x)||^{\frac{p + 1}{p}}] - \frac{1}{2}\sum_{i = 2}^{p} (\frac{2p \cdot p!}{M})^{\frac{1}{p }} \cdot \mathbb{E}[||\nabla^i F(x) - D^i(x)||^{\frac{p + 1}{p}}] \cdot \eta^{\frac{p + 1}{p}} \\
        &\ge \frac{M \eta^{p + 1}}{16(p + 1)!} \Pr(||\nabla F(y)|| \ge \frac{9M}{8p!} \eta^p) - \frac{(p!)^{\frac{p + 1}{p}}}{8 \sqrt[p]{M}} \cdot \sum_{i = 0}^{p - 1} \mathbb{E}[||\nabla^{i + 1} F(x) - D^{i + 1}(x)||^{\frac{p + 1}{p}} \cdot \eta^{\frac{i(p + 1)}{p}}] \\ 
        &- (\frac{2p!}{M})^{\frac{1}{p }} \cdot \mathbb{E}[||\nabla F(x) - D^{(1)}(x)||^{\frac{p + 1}{p}}] - \frac{1}{2}\sum_{i = 2}^{p} (\frac{2p \cdot p!}{M})^{\frac{1}{p }} \cdot \mathbb{E}[||\nabla^i F(x) - D^i(x)||^{\frac{p + 1}{p}}] \cdot \eta^{\frac{p + 1}{p}} \\ 
        &= \frac{M \eta^{p + 1}}{16(p + 1)!} \Pr(||\nabla F(y)|| \ge \frac{9M}{8p!} \eta^p) - [\frac{(p!)^{\frac{p + 1}{p}}}{8 \sqrt[p]{M}} + (\frac{2p!}{M})^{\frac{1}{p }}] \cdot \mathbb{E}[||\nabla F(x) - D^{(1)}(x)||^{\frac{p + 1}{p}}] \\
        &- \frac{(p!)^{\frac{p + 1}{p}}}{8 \sqrt[p]{M}} \cdot \sum_{i = 1}^{p - 1} \mathbb{E}[||\nabla^{i + 1} F(x) - D^{i + 1}(x)||^{\frac{p + 1}{p}} \cdot \eta^{\frac{i(p + 1)}{p}}] - \frac{1}{2}\sum_{i = 2}^{p} (\frac{2p \cdot p!}{M})^{\frac{1}{p }} \cdot \mathbb{E}[||\nabla^i F(x) - D^i(x)||^{\frac{p + 1}{p}}] \cdot \eta^{\frac{p + 1}{p}} \\
        &= \frac{M \eta^{p + 1}}{16(p + 1)!} \Pr(||\nabla F(y)|| \ge \frac{9M}{8p!} \eta^p) - [\frac{(p!)^{\frac{p + 1}{p}}}{8 \sqrt[p]{M}} + (\frac{2p!}{M})^{\frac{1}{p }}] \cdot \mathbb{E}[||\nabla F(x) - D^{(1)}(x)||^{\frac{p + 1}{p}}] \\ 
        &- \frac{(p!)^{\frac{p + 1}{p}}}{8 \sqrt[p]{M}} \cdot \sum_{i = 2}^{p} \mathbb{E}[||\nabla^{i} F(x) - D^{i}(x)||^{\frac{p + 1}{p}} \cdot \eta^{\frac{(i - 1)(p + 1)}{p}}] - \frac{1}{2}\sum_{i = 2}^{p} (\frac{2p \cdot p!}{M})^{\frac{1}{p }} \cdot \mathbb{E}[||\nabla^i F(x) - D^i(x)||^{\frac{p + 1}{p}}] \cdot \eta^{\frac{p + 1}{p}} \\ 
        &\ge \frac{M \eta^{p + 1}}{16(p + 1)!} \Pr(||\nabla F(y)|| \ge \frac{9M}{8p!} \eta^p) - [\frac{(p!)^{\frac{p + 1}{p}}}{8 \sqrt[p]{M}} + (\frac{2p!}{M})^{\frac{1}{p}}] \cdot \mathbb{E}[||\nabla F(x) - g||^{\frac{p + 1}{p}}] \\ 
        &- \frac{(p!)^{\frac{p + 1}{p}}}{8 \sqrt[p]{M}} \cdot \sum_{i = 2}^{p} \mathbb{E}[||\nabla^{i} F(x) - D^{i}(x)||^{\frac{p + 1}{p}} \cdot \eta^{\frac{(p + 1)}{p}}] - \frac{1}{2}\sum_{i = 2}^{p} (\frac{2p \cdot p!}{M})^{\frac{1}{p }} \cdot \mathbb{E}[||\nabla^i F(x) - D^i(x)||^{\frac{p + 1}{p}}] \cdot \eta^{\frac{p + 1}{p}} 
    \end{flalign*} where the last step follows from the fact that $\eta \le  1$. We further lower bound this expression by \begin{flalign*}
        &\frac{M \eta^{p + 1}}{16(p + 1)!} \Pr(||\nabla F(y)|| \ge \frac{9M}{8p!} \eta^p) - [\frac{(p!)^{\frac{p + 1}{p}}}{8 \sqrt[p]{M}} + (\frac{2p!}{M})^{\frac{1}{p }}] \cdot \mathbb{E}[||\nabla F(x) - D^{(1)}(x)||^{\frac{p + 1}{p}}] \\ 
        &- \eta^{\frac{p + 1}{p}}(\frac{(p!)^{\frac{p + 1}{p}}}{8 \sqrt[p]{M}} + \frac{1}{2} (\frac{2p \cdot p!}{M})^{\frac{1}{p }}) \sum_{i = 2}^p \mathbb{E}[||\nabla^i F(x) - D^i(x)||^{\frac{p + 1}{p}}] \\ 
        &\ge \frac{M \eta^{p + 1}}{16(p + 1)!} \Pr(||\nabla F(y)|| \ge \frac{9M}{8p!} \eta^p) - [\frac{(p!)^{\frac{p + 1}{p}}}{8 \sqrt[p]{M}} + (\frac{2p!}{M})^{\frac{1}{p }}] \cdot 2^{1/p}[(\frac{C_1 \cdot \sigma_1^2}{n_1})^{\frac{p + 1}{2p}} + B_1^{\frac{p + 1}{2p}}] \\ 
        &- \eta^{\frac{p + 1}{p}}(\frac{(p!)^{\frac{p + 1}{p}}}{8 \sqrt[p]{M}} + \frac{1}{2} (\frac{2p \cdot p!}{M})^{\frac{1}{p }}) \sum_{i = 2}^p 2^{1/p} \cdot [(\frac{C_i \cdot \sigma_i^2}{n_i})^{\frac{p + 1}{2p}} + B_i^{\frac{p + 1}{2p}}] \\
        &\ge \frac{M \eta^{p + 1}}{16(p + 1)!} \Pr(||\nabla F(y)|| \ge \frac{9M}{8p!} \eta^p) - [\frac{(p!)^{\frac{p + 1}{p}}}{4 \sqrt[p]{M}} + 2(\frac{2p!}{M})^{\frac{1}{p }}] \cdot [(\frac{C_1 \cdot \sigma_1^2}{n_1})^{\frac{p + 1}{2p}} + B_1^{\frac{p + 1}{2p}}] \\ 
        &- \eta^{\frac{p + 1}{p}}(\frac{(p!)^{\frac{p + 1}{p}}}{4 \sqrt[p]{M}} +  (\frac{2p \cdot p!}{M})^{\frac{1}{p }}) \sum_{i = 2}^p [(\frac{C_i \cdot \sigma_i^2}{n_i})^{\frac{p + 1}{2p}} + B_i^{\frac{p + 1}{2p}}]
    \end{flalign*} where we used the fact that $2^{1/p} \le 2$ for all $p \ge 1$, finishing the proof. 
\end{proof}
\newpage 
\begin{lemma}
    \label{lemma:gradprobbound}
    Let $F \in \mathcal{F}_p(\Delta, L_{1: p})$ be given. Then, if the derivative estimates $D^i$ are random variables, 
    it holds that \begin{flalign*}
        &\Pr(||\nabla F(\hat{x})|| \ge \frac{9M}{8p!} \eta^p) \le \frac{16(p + 1)!}{M \eta^{p + 1}T}\Delta + \frac{16(p + 1)!}{M \eta^{p + 1}}[\frac{(p!)^{\frac{p + 1}{p}}}{4 \sqrt[p]{M}} + 2(\frac{2p!}{M})^{\frac{1}{p }}] \cdot [(\frac{C_1 \cdot \sigma_1^2}{n_1})^{\frac{p + 1}{2p}} + B_1^{\frac{p + 1}{2p}}] \\ 
        &+ \frac{16(p + 1)!}{M \eta^{\frac{p^2 - 1}{p}}} (\frac{(p!)^{\frac{p + 1}{p}}}{4 \sqrt[p]{M}} +  (\frac{2p \cdot p!}{M})^{\frac{1}{p}}) \sum_{i = 2}^p [(\frac{C_i \cdot \sigma_i^2}{n_i})^{\frac{p + 1}{2p}} + B_i^{\frac{p + 1}{2p}}]
    \end{flalign*}
\end{lemma}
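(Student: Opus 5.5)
The plan is to apply Lemma \ref{lemma:randomvarderivatives} along the trajectory of Algorithm \ref{alg:novariancereduction}, telescope over iterations, and divide by the coefficient of the probability term. At iteration $t$ we take $x = x^{(t)}$ and $y = x^{(t+1)}$, where $x^{(t+1)}$ is the constrained minimizer of the regularized model $m_{x^{(t)}}$ over the ball of radius $\eta<1$. Since $M = 8L_p$, the hypotheses of Lemma \ref{lemma:randomvarderivatives} are met. The estimates $D^{(i)}_t$ are built from fresh samples given $x^{(t)}$, so we apply the lemma conditionally on $x^{(t)}$ and then take total expectation. This gives
\begin{flalign*}
&\mathbb{E}[F(x^{(t)}) - F(x^{(t+1)})] \ge \frac{M\eta^{p+1}}{16(p+1)!}\Pr\Big(\|\nabla F(x^{(t+1)})\| \ge \tfrac{9M}{8p!}\eta^p\Big) - E_1 - \eta^{\frac{p+1}{p}} E_2,
\end{flalign*}
where $E_1$ and $E_2$ are the two error expressions of Lemma \ref{lemma:randomvarderivatives}. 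They involve $C_i$ coming from Lemma \ref{lemma:constantsC_i} and the follow-up lemma on $\mathbb{E}\|D_t^{(i)}-\nabla^i F\|^{\frac{p+1}{p}}$. Because $n_i$ is fixed across iterations, these terms do not depend on $t$.

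Next, sum over the $T$ iterations. The left side telescopes to $\mathbb{E}[F(x^{(0)}) - F(x^{(T)})]$, which is at most $\Delta$ since $x^{(0)}$ is the start (taken to be $0$, or with $F(x^{(0)})-\inf F\le\Delta$). On the right, use that $\hat{x}$ is uniform over the $T$ iterates. Then $\frac1T\sum_t \Pr(\|\nabla F(x^{(t+1)})\|\ge \cdot)$ equals $\Pr(\|\nabla F(\hat{x})\|\ge \frac{9M}{8p!}\eta^p)$, up to the off-by-one indexing between $\{x^{(t)}\}_{t=1}^T$ and the $x^{(t+1)}$ produced. The index range should be aligned so that the average matches the output set exactly. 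This yields
\[
\Delta \ge \frac{M\eta^{p+1}T}{16(p+1)!}\Pr(\cdot) - T E_1 - T\eta^{\frac{p+1}{p}} E_2 .
\]

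Finally, rearrange by dividing by $\frac{M\eta^{p+1}T}{16(p+1)!}$. The first term becomes $\frac{16(p+1)!\Delta}{M\eta^{p+1}T}$ and the second becomes $\frac{16(p+1)!}{M\eta^{p+1}}E_1$. For the third, $\eta^{\frac{p+1}{p}}/\eta^{p+1} = \eta^{-\frac{p^2-1}{p}}$, which produces the stated $\frac{16(p+1)!}{M\eta^{\frac{p^2-1}{p}}}E_2$ factor.

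The main obstacle is bookkeeping rather than anything conceptual. First, the coefficient exponents in Lemma \ref{lemma:randomvarderivatives} as stated, $(2p!/M)^{1/(p+1)}$ and $B_i^{\frac{p+1}{2p}}$, differ slightly from those in its proof, $(2p!/M)^{1/p}$. I would restate the lemma using the form actually derived, so that the constants match the target inequality. Second, the conditioning argument must justify that the per-step bound on $\mathbb{E}\|D^{(i)}_t - \nabla^i F(x^{(t)})\|^{\frac{p+1}{p}}$ holds conditionally on $x^{(t)}$. This needs a tower-property remark, since $x^{(t)}$ is itself random.
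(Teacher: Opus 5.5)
Your proposal is correct and is essentially the paper's proof. You apply Lemma~\ref{lemma:randomvarderivatives} at each iterate in the form reached at the end of its proof (with the $C_i$ and exponent $1/p$), telescope, identify the averaged probabilities with $\Pr(\|\nabla F(\hat{x})\| \ge \frac{9M}{8p!}\eta^p)$, and divide by $\frac{M\eta^{p+1}T}{16(p+1)!}$. Your indexing over $t=0,\dots,T-1$ is in fact cleaner than the paper's telescoping over $t=1,\dots,T$: for Algorithm~\ref{alg:novariancereduction}, the paper's version implicitly needs $F(x^{(1)})-\inf F\le\Delta$ and averages over $x^{(2)},\dots,x^{(T+1)}$ rather than the returned set.

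One caveat is inherited from the earlier lemmas. The preceding moment bound only gives a bias contribution $B_i^{\frac{p+1}{p}}$. The proof of Lemma~\ref{lemma:randomvarderivatives} replaces it by $B_i^{\frac{p+1}{2p}}$, and the target inherits this, but the replacement is valid only when $B_i\le 1$. So for larger bias your argument, like the paper's, only yields the bound with $B_i^{\frac{p+1}{p}}$ in place of $B_i^{\frac{p+1}{2p}}$.
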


\begin{proof}
    From lemma \ref{lemma:randomvarderivatives}, we have that \begin{flalign*}
        &\mathbb{E}[F(x^{(t)}) - F(x^{(t + 1)})] \\ 
        &\ge \frac{M \eta^{p + 1}}{16(p + 1)!} \Pr(||\nabla F(x^{(t + 1)})|| \ge \frac{9M}{8p!} \eta^p) - [\frac{(p!)^{\frac{p + 1}{p}}}{4 \sqrt[p]{M}} + 2(\frac{2p!}{M})^{\frac{1}{p }}] \cdot [(\frac{C_1 \cdot \sigma_1^2}{n_1})^{\frac{p + 1}{2p}} + B_1^{\frac{p + 1}{2p}}] \\
        &- \eta^{\frac{p + 1}{p}}(\frac{(p!)^{\frac{p + 1}{p}}}{4 \sqrt[p]{M}} +  (\frac{2p \cdot p!}{M})^{\frac{1}{p}}) \sum_{i = 2}^p [(\frac{C_i \cdot \sigma_i^2}{n_i})^{\frac{p + 1}{2p}} + B_i^{\frac{p + 1}{2p}}] 
    \end{flalign*} Telescoping this recurrence from $t = 1$ to $T$ gives \begin{flalign*}
        &\mathbb{E}[F(x^{(1)}) - F(x^{(T + 1)})] \\ 
    &\ge \frac{M \eta^{p + 1}}{16 (p + 1)!} \cdot T \cdot (\frac{1}{T} \sum_{t = 1}^T \Pr(||\nabla F(x^{(t + 1)})|| \ge \frac{9M}{8p!} \eta^p)) - T[\frac{(p!)^{\frac{p + 1}{p}}}{4 \sqrt[p]{M}} + 2(\frac{2p!}{M})^{\frac{1}{p }}] \cdot [(\frac{C_1 \cdot \sigma_1^2}{n_1})^{\frac{p + 1}{2p}} + B_1^{\frac{p + 1}{2p}}] \\
    &- T\eta^{\frac{p + 1}{p}}(\frac{(p!)^{\frac{p + 1}{p}}}{4 \sqrt[p]{M}} +  (\frac{2p \cdot p!}{M})^{\frac{1}{p }}) \sum_{i = 2}^p [(\frac{C_i \cdot \sigma_i^2}{n_i})^{\frac{p + 1}{2p}} + B_i^{\frac{p + 1}{2p}}] \\
    &= \frac{M \eta^{p + 1}}{16(p + 1)!} \cdot T \cdot ( \Pr(||\nabla F(\hat{x})|| \ge \frac{9M}{8p!} \eta^p)) - T[\frac{(p!)^{\frac{p + 1}{p}}}{4 \sqrt[p]{M}} + 2(\frac{2p!}{M})^{\frac{1}{p }}] \cdot [(\frac{C_1 \cdot \sigma_1^2}{n_1})^{\frac{p + 1}{2p}} + B_1^{\frac{p + 1}{2p}}] \\
    &- T\eta^{\frac{p + 1}{p}}(\frac{(p!)^{\frac{p + 1}{p}}}{4 \sqrt[p]{M}} +  (\frac{2p \cdot p!}{M})^{\frac{1}{p }}) \sum_{i = 2}^p [(\frac{C_i \cdot \sigma_i^2}{n_i})^{\frac{p + 1}{2p}} + B_i^{\frac{p + 1}{2p}}]
    \end{flalign*} which implies that \begin{flalign*}
        &\Pr(||\nabla F(\hat{x})|| \ge \frac{9M}{8p!} \eta^p) \le \frac{16(p + 1)!}{M \eta^{p + 1}T}\Delta + \frac{16(p + 1)!}{M \eta^{p + 1}}[\frac{(p!)^{\frac{p + 1}{p}}}{4 \sqrt[p]{M}} + 2(\frac{2p!}{M})^{\frac{1}{p }}] \cdot [(\frac{C_1 \cdot \sigma_1^2}{n_1})^{\frac{p + 1}{2p}} + B_1^{\frac{p + 1}{2p}}] \\
        &+ \frac{16(p + 1)!}{M \eta^{p + 1}}\eta^{\frac{p + 1}{p}}(\frac{(p!)^{\frac{p + 1}{p}}}{4 \sqrt[p]{M}} +  (\frac{2p \cdot p!}{M})^{\frac{1}{p }}) \sum_{i = 2}^p [(\frac{C_i \cdot \sigma_i^2}{n_i})^{\frac{p + 1}{2p}} + B_i^{\frac{p + 1}{2p}}] \\
    &= \frac{16(p + 1)!}{M \eta^{p + 1}T}\Delta + \frac{16(p + 1)!}{M \eta^{p + 1}}[\frac{(p!)^{\frac{p + 1}{p}}}{4 \sqrt[p]{M}} + 2(\frac{2p!}{M})^{\frac{1}{p }}] \cdot [(\frac{C_1 \cdot \sigma_1^2}{n_1})^{\frac{p + 1}{2p}} + B_1^{\frac{p + 1}{2p}}] \\ 
    &+ \frac{16(p + 1)!}{M \eta^{\frac{p^2 - 1}{p}}} (\frac{(p!)^{\frac{p + 1}{p}}}{4 \sqrt[p]{M}} +  (\frac{2p \cdot p!}{M})^{\frac{1}{p}}) \sum_{i = 2}^p [(\frac{C_i \cdot \sigma_i^2}{n_i})^{\frac{p + 1}{2p}} + B_i^{\frac{p + 1}{2p}}] \\
    \end{flalign*} which finishes the proof. 
\end{proof}

\begin{theorem}
    (Theorem \ref{thm:upperbounddeterministic} restated). For any function $F \in \mathcal{F}_p(\Delta, L_{1:p})$, where $p \ge 2$, $\epsilon > 0$, with biased and stochastic $p^{th}$-order oracles in $\mathcal{O}(F, \sigma_{1:p}, B_{1:p})$ where $\max_{i} B_i \ge  \Omega(\epsilon^{\frac{3p}{3p + 1}})$, with probability at least $\frac{5}{8}$, Algorithm \ref{alg:pth_deterministic} returns a point $\hat{x}$ such that $||\nabla F(\hat{x})|| \le O(\epsilon + \max_{i} B_i)$ and performs at most \begin{flalign*}
        &O(\frac{\Delta (\max_i \sigma_i)^2 }{\epsilon^3 (\max_i B_i + 1)^{\frac{p + 1}{p}}} + \frac{(\epsilon + \max_i B_i)^{\frac{p + 1}{p}}(\max_i \sigma_i)^2}{\epsilon^3 (\max_i B_i + 1)^{\frac{p + 1}{p}}})
    \end{flalign*} queries to the stochastic and biased derivative oracles.
\end{theorem}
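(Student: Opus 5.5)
The plan is to start from the probability bound of Lemma \ref{lemma:gradprobbound} and choose the algorithm's parameters so that each of its three terms is at most $\frac{1}{8}$. That gives $\Pr(\|\nabla F(\hat x)\| \ge \frac{9M}{8p!}\eta^p) \le \frac{3}{8}$, i.e.\ success probability at least $\frac{5}{8}$. Write $B=\max_i B_i$ and $\sigma=\max_i\sigma_i$.

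\textbf{Step 1 (the three terms).} With $M=8L_p$ fixed, the quantity $\frac{16(p+1)!}{M}\max\{\cdots\}$ is the constant $A$ of Algorithm \ref{alg:pth_deterministic}.
\begin{itemize}
\item \emph{Suboptimality term.} The choice $T=\lceil 8A\Delta/\eta^{p+1}\rceil$ makes $\frac{16(p+1)!\Delta}{M\eta^{p+1}T}\le \frac{A\Delta}{\eta^{p+1}T}\le\frac18$.
\item \emph{First-order term.} It is at most $\frac{A}{\eta^{p+1}}\big[(C_1\sigma_1^2/n_1)^{\frac{p+1}{2p}}+B_1^{\frac{p+1}{2p}}\big]$.
\item \emph{Higher-order terms.} These carry the smaller power $\eta^{\frac{p^2-1}{p}}$ and a factor $p$ from summing over $i=2,\dots,p$, and have the same bracketed form.
\end{itemize}
The lower bounds imposed on $n_1$ and on $n_i$ ($i\ge2$) in lines 5--6 of the algorithm are exactly the inequalities obtained by requiring each bracket to be at most $\eta^{p+1}/(8A)$, respectively $\eta^{\frac{p^2-1}{p}}/(8Ap)$. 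Those denominators are positive only if $B_i^{\frac{p+1}{2p}}$ lies below the corresponding threshold. I will verify this using $\eta=\min\{(\epsilon+B)^{1/p},1-\epsilon\}$, so that $\eta^{p+1}$ is of order $(\epsilon+B)^{\frac{p+1}{p}}$, and treat it as part of the admissibility of the parameters. Summing gives the $\frac38$ failure bound.

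\textbf{Step 2 (stationarity).} On the success event, $\|\nabla F(\hat x)\|<\frac{9M}{8p!}\eta^p\le \frac{9L_p}{p!}(\epsilon+B)=O(\epsilon+B)$, because $\eta^p\le\epsilon+B$.

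\textbf{Step 3 (query count).} The total number of queries is $T\sum_{i}n_i$. Taking each $n_i$ at its allowed upper bound $\frac{(\epsilon+B)^{\frac{p+1}{p}}\sigma^2}{\epsilon^3(1+B)^{\frac{p+1}{p}}}$ gives
\[
T\sum_i n_i = O\!\left(\frac{\Delta}{\eta^{p+1}}+1\right)\cdot\frac{(\epsilon+B)^{\frac{p+1}{p}}\sigma^2}{\epsilon^3(1+B)^{\frac{p+1}{p}}}.
\]
When $\eta=(\epsilon+B)^{1/p}$, the factor $\eta^{-(p+1)}$ cancels $(\epsilon+B)^{\frac{p+1}{p}}$. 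This yields $\frac{\Delta\sigma^2}{\epsilon^3(B+1)^{\frac{p+1}{p}}}$ plus the stated second term.

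\textbf{Main obstacle.} The hard step is checking that the interval for $n_i$ in lines 5--6 is nonempty. This requires
\[
\frac{C_i\sigma_i^2}{\bigl(\eta^{p+1}/(8A)-B_i^{\frac{p+1}{2p}}\bigr)^{\frac{2p}{p+1}}}\le\frac{(\epsilon+B)^{\frac{p+1}{p}}\sigma^2}{\epsilon^3(1+B)^{\frac{p+1}{p}}}.
\]
This is where the hypothesis $B\ge\Omega(\epsilon^{\frac{3p}{3p+1}})$ has to enter: it should make the right side, at least of order $\epsilon^{-3}(\epsilon+B)^{\frac{p+1}{p}}$ for $B\le\Theta(1)$, dominate the left side, up to the constants $C_i$ from Lemma \ref{lemma:constantsC_i}. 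I would first try to compare exponents of $\epsilon$ on both sides under $B=\Theta(\epsilon^{\frac{3p}{3p+1}})$. If the gap is too tight, I would absorb constants into the $\Omega(\cdot)$ and into $A$.
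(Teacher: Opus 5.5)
Your outline follows the paper's proof step for step: the three-term bound of Lemma \ref{lemma:gradprobbound}, $T$ for the $\Delta$ term, the lower ends of the $n_i$ ranges for the other two terms, $\eta^p\le\epsilon+B$ for stationarity, and $T\sum_i n_i$ at the upper ends for the count. However, the step you flag as the main obstacle is a genuine gap, and your plan for it cannot succeed. What blocks it is the bias term, and no choice of $n_i$ affects that term.

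As written, $\eta^{p+1}/(8A)-B_1^{\frac{p+1}{2p}}$ is negative in the small-bias part of the regime the theorem covers. Take $B_1=B$ and $\epsilon\le B\le 1$, e.g.\ $B=\Theta(\epsilon^{\frac{3p}{3p+1}})$ with $\epsilon\to 0$. Then $\eta^{p+1}\le(2B)^{\frac{p+1}{p}}$, so $\eta^{p+1}/(8A)\le \frac{2^{(p+1)/p}}{8A}B^{\frac{p+1}{2p}}\cdot B^{\frac{p+1}{2p}}$. This is below $B^{\frac{p+1}{2p}}$ once $B$ is smaller than a constant, so the range in line 5 is empty.

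The exponent on $B_i$ that the moment bound after Lemma \ref{lemma:constantsC_i} actually delivers is $\frac{p+1}{p}$. With that exponent, the bias part of the first-order term equals $c_pM^{-\frac{p+1}{p}}(B_1/\eta^p)^{\frac{p+1}{p}}$ for an explicit $c_p$. When $B_1\approx B\gg\epsilon$ this is $\approx c_pM^{-\frac{p+1}{p}}$ (about $20$ for $p=2$, $L_2=1$). That is a constant independent of $n_1$. It is at most $\frac18$ only if $M=8L_p$ happens to exceed a $p$-dependent threshold; with the proof's own accounting via $A\ge 16(p+1)!/M$, this means roughly $L_p\ge 16(p+1)!$. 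Nothing in the hypotheses guarantees this. For $B\gg 1$ one has $\eta=1-\epsilon$ and this term grows like $B^{\frac{p+1}{p}}$, so no $M$ independent of $B$ works.

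The hypothesis $B\ge\Omega(\epsilon^{\frac{3p}{3p+1}})$ handles only the sampling part, and there your exponent comparison is the right computation. If the denominator is $\asymp\eta^{p+1}$, the lower end of the $n_1$ range is $\asymp\sigma^2/(\epsilon+B)^2$. The upper end is $\asymp\sigma^2(\epsilon+B)^{\frac{p+1}{p}}/\epsilon^3$ for $B\le 1$. These are compatible iff $(\epsilon+B)^{\frac{3p+1}{p}}\gtrsim\epsilon^3$, which is precisely the hypothesis. The $i\ge2$ ranges need only $(\epsilon+B)^{\frac{3p-1}{p}}\gtrsim\epsilon^3$.

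Your two fallbacks do not reach the bias issue. A larger constant in $\Omega(\cdot)$ still allows $B\to0$, and raising $B$ only pushes $B_1/(\epsilon+B)$ toward $1$. Nothing can be absorbed into $A$, which is fixed by $M=8L_p$.

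The paper's proof has the same hole: it simply asserts that the $n_i$ exist ``due to'' the hypothesis, so following it will not close this step. A repair must change the method. Take $M\ge 8L_p$ to be a sufficiently large constant so that the bias contributions (each at most $c_pM^{-\frac{p+1}{p}}(B_i/\eta^p)^{\frac{p+1}{p}}$) total at most $\frac18$; this keeps $\frac{9M}{8p!}\eta^p=O(\epsilon+B)$. In addition, restrict to $B=O(1)$ so that $B_i/\eta^p=O(1)$.

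That restriction is also needed for your Step 3, which treats only the case $\eta=(\epsilon+B)^{1/p}$. When $B=\omega(1)$ one gets $T\asymp\Delta$ and $n_i\asymp\sigma^2/\epsilon^3$, which does not give the stated count.
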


\begin{proof}
    From lemma \ref{lemma:gradprobbound}, we have that \begin{flalign*}
        &\Pr(||\nabla F(\hat{x})|| \ge \frac{9M}{8p!} \eta^p) \le \frac{16(p + 1)!}{M \eta^{p + 1}T}\Delta + \frac{16(p + 1)!}{M \eta^{p + 1}}[\frac{(p!)^{\frac{p + 1}{p}}}{4 \sqrt[p]{M}} + 2(\frac{2p!}{M})^{\frac{1}{p }}] \cdot [(\frac{C_1 \cdot \sigma_1^2}{n_1})^{\frac{p + 1}{2p}} + B_1^{\frac{p + 1}{2p}}] \\ 
        &+ \frac{16(p + 1)!}{M \eta^{\frac{p^2 - 1}{p}}} (\frac{(p!)^{\frac{p + 1}{p}}}{4 \sqrt[p]{M}} +  (\frac{2p \cdot p!}{M})^{\frac{1}{p}}) \sum_{i = 2}^p [(\frac{C_i \cdot \sigma_i^2}{n_i})^{\frac{p + 1}{2p}} + B_i^{\frac{p + 1}{2p}}]
    \end{flalign*} Let \begin{flalign*}
        &A = \max(\frac{16(p + 1)!}{M}, \frac{16 (p + 1)!}{M} [\frac{(p!)^{\frac{p + 1}{p}}}{4 \sqrt[p]{M}} + 2(\frac{2p!}{M})^{\frac{1}{p }}], \frac{16(p + 1)!}{M} [\frac{(p!)^{\frac{p + 1}{p}}}{4 \sqrt[p]{M}} +  (\frac{2p \cdot p!}{M})^{\frac{1}{p}}])
    \end{flalign*} Therefore, we have the following upper bound: \begin{flalign*}
        &\frac{A \Delta}{\eta^{p + 1} T} + \frac{A}{\eta^{p + 1}} [(\frac{C_1 \cdot \sigma_1^2}{n_1})^{\frac{p +1}{2p}} + B_1^{\frac{p + 1}{2p}}] + \frac{A}{\eta^{\frac{p^2 - 1}{p}}} \sum_{i = 2}^p [(\frac{C_i \cdot \sigma_i^2}{n_i})^{\frac{p + 1}{2p}} + B_i^{\frac{p + 1}{2p}}]
    \end{flalign*} From our choice of $n_1$ (where such an $n_1$ exists due to $\max_i B_i \ge \Omega(\epsilon^{\frac{3p}{3p + 1}})$ such that \begin{flalign*}
        &\max \{\frac{C_1 \cdot \sigma_1^2}{(\frac{\eta^{p + 1}}{8A} - B_1^{\frac{p + 1}{2p}})^{\frac{2p}{p + 1}}}, 1 \} \le n_1 \le \frac{ (\epsilon + \max_{i} B_i)^{\frac{p + 1}{p}} (\max_i \sigma_i)^2}{\epsilon^3 (\max_{i} B_i + 1)^{\frac{p + 1}{p}}}
    \end{flalign*} we have that \begin{flalign*}
        &\frac{A}{\eta^{p + 1}}[(\frac{C_1 \cdot \sigma_1^2}{n_1}) + B_1^{\frac{p + 1}{2p}}] \le \frac{1}{8}
    \end{flalign*} From our choice of $T = \lceil \frac{8 A \Delta}{\eta^{p + 1}} \rceil$ we have that \begin{flalign*}
        &\frac{A \Delta}{\eta^{p + 1} T} \le \frac{1}{8}
    \end{flalign*} From our choice of $n_i$ (for all $i \ge 2$, which exists due to $\max_i B_i \ge \Omega(\epsilon^{\frac{3p}{3p + 1}}))$ such that \begin{flalign*}
        &\max \{C_i \sigma_i^2 (\frac{\eta^{\frac{p^2 - 1}{p}}}{8Ap} - B_i^{\frac{p + 1}{2p}})^{\frac{-2p}{p + 1}}, 1 \} \le n_i \le \frac{ (\epsilon + \max_i B_i)^{\frac{p + 1}{p}} (\max_i \sigma_i)^2}{\epsilon^3 (\max_{i} B_i + 1)^{\frac{p + 1}{p}}}
    \end{flalign*} it holds that \begin{flalign*}
        &\frac{A}{\eta^{\frac{p^2 - 1}{p}}} [(\frac{C_i \cdot \sigma_i^2}{n_i})^{\frac{p + 1}{2p}} + B_i^{\frac{p + 1}{2p}}] \le \frac{1}{8p}
    \end{flalign*} which implies that \begin{flalign*}
        &\frac{A}{\eta^{\frac{p^2 - 1}{p}}} \sum_{i = 2}^p [(\frac{C_i \cdot \sigma_i^2}{n_i})^{\frac{p + 1}{2p}} + B_i^{\frac{p + 1}{2p}}] \le \frac{(p - 1)}{8p} \le \frac{1}{8}
    \end{flalign*} Therefore, we have that \begin{flalign*}
        &\Pr(||\nabla F(\hat{x})|| \ge \frac{9M}{8p!} (\epsilon + \max_{i} B_i)) \le \frac{3}{8}
    \end{flalign*} which implies that \begin{flalign*}
        &\Pr(||\nabla F(\hat{x})|| < \frac{9M}{8p!} (\epsilon + \max_{i} B_i)) \ge \frac{5}{8}
    \end{flalign*} Now, we give a bound on the oracle complexity. Let $M$ be the total number of oracle queries that we make. In every iteration, we query the $i^{th}$ derivative oracle $n_i$ times which yields that \begin{flalign*}
        &\mathbb{E}[M] =T \sum_{i = 1}^p n_i \\ 
        &\le (\frac{8 A \Delta}{\eta^{p + 1}} + 1) \sum_{i = 1}^p n_i 
    \end{flalign*} Substituting the upper bound for $n_i$, considering the case where $\Omega(\epsilon^{\frac{3p}{3p + 1}}) \le B \le \Theta(1)$, we can further bound this expression by \begin{flalign*}
        &(\frac{8 A \Delta}{\eta^{p + 1}} + 1) \sum_{i = 1}^p \frac{ (\epsilon + \max_j B_j)^{\frac{p + 1}{p}} \cdot (\max_i \sigma_i)^2}{\epsilon^3 (\max_j B_j + 1)^{\frac{p + 1}{p}}} \\ 
        &\le O(\frac{\Delta}{(\epsilon + \max_j B_j)^{\frac{p + 1}{p}}} \cdot \frac{(\epsilon + \max_j B_j)^{\frac{p + 1}{p}}(\max_i \sigma_i)^2}{\epsilon^3 (\max_j B_j + 1)^{\frac{p + 1}{p}}} + \frac{ (\epsilon + \max_j B_j)^{\frac{p + 1}{p}}  (\max_i \sigma_i)^2}{\epsilon^3(\max_j B_j + 1)^{\frac{p + 1}{p}}}) \\ 
        &= O(\frac{\Delta (\max_i \sigma_i)^2}{\epsilon^3 (\max_j B_j + 1)^{\frac{p + 1}{p}}}+ \frac{(\epsilon + \max_j B_j)^{\frac{p + 1}{p}}(\max_i \sigma_i)^2}{\epsilon^3 (\max_j B_j + 1)^{\frac{p + 1}{p}}})
    \end{flalign*} Considering the case where $B > \Omega(1)$, we have that \begin{flalign*}
        &(\frac{8 A \Delta}{\eta^{p + 1}} + 1) \sum_{i = 1}^p \frac{ (\epsilon + \max_j B_j)^{\frac{p + 1}{p}} \cdot (\max_i \sigma_i)^2}{\epsilon^3 (\max_j B_j + 1)^{\frac{p + 1}{p}}} \le O(\frac{\Delta (\epsilon + B)^{\frac{p + 1}{p}}(\max_i \sigma_i)^2}{\epsilon^3 (B + 1)^{\frac{p + 1}{p}}})
    \end{flalign*} since $(1 - \epsilon)^{p + 1}$ can be lower bounded by an $\Theta(1)$ constant. Using Markov's inequality followed by a union bound yields the above oracle complexities with probability at least $\frac{5}{8}$, which finishes the proof.  
\end{proof}

%% file: arxiv.bbl
\begin{thebibliography}{10}

\bibitem{adil2026convex}
Deeksha Adil, Brian Bullins, Arun Jambulapati, and Aaron Sidford.
\newblock Convex optimization with $p$-norm oracles.
\newblock In {\em 37th International Conference on Algorithmic Learning Theory}, 2026.

\bibitem{findingapproxlocalminimafastergd}
Naman Agarwal, Zeyuan Allen-Zhu, Brian Bullins, Elad Hazan, and Tengyu Ma.
\newblock Finding approximate local minima faster than gradient descent.
\newblock {\em Symposium on Theory of Computing}, 2017.

\bibitem{OnConvergenceSGDBiased}
Ahmad Ajalloeian and Sebastian~U. Stich.
\newblock On the convergence of sgd with biased gradients.
\newblock {\em International Conference on Machine Learning, Workshop on "Beyond First Order Methods in ML Systems"}, 2020.

\bibitem{aji2017sparse}
Alham~Fikri Aji and Kenneth Heafield.
\newblock Sparse communication for distributed gradient descent.
\newblock In {\em Proceedings of the 2017 conference on empirical methods in natural language processing}, pages 440--445, 2017.

\bibitem{alistarh2018convergence}
Dan Alistarh, Torsten Hoefler, Mikael Johansson, Nikola Konstantinov, Sarit Khirirat, and C{\'e}dric Renggli.
\newblock The convergence of sparsified gradient methods.
\newblock {\em Advances in Neural Information Processing Systems}, 31, 2018.

\bibitem{allen2018make}
Zeyuan Allen-Zhu.
\newblock How to make the gradients small stochastically: Even faster convex and nonconvex sgd.
\newblock {\em Advances in Neural Information Processing Systems}, 31, 2018.

\bibitem{allen2018natasha}
Zeyuan Allen-Zhu.
\newblock Natasha 2: Faster non-convex optimization than sgd.
\newblock {\em Advances in neural information processing systems}, 31, 2018.

\bibitem{SecondOrderOptNonconvexStochastic}
Yossi Arjevani, Yair Carmon, John~C. Duchi, Dylan~J. Foster, Ayush Sekhari, and Karthik Sridharan.
\newblock Second-order information in non-convex stochastic optimization: Power and limitations.
\newblock {\em Conference on Learning Theory}, 2020.

\bibitem{lowerboundsnonconvexstochastic}
Yossi Arjevani, Yair Carmon, John~C. Duchi, Dylan~J. Foster, Nathan Srebro, and Blake Woodworth.
\newblock Lower bounds for non-convex stochastic optimization.
\newblock {\em Mathematical Programming}, 2019.

\bibitem{bernstein2018signsgd}
Jeremy Bernstein, Yu-Xiang Wang, Kamyar Azizzadenesheli, and Animashree Anandkumar.
\newblock signsgd: Compressed optimisation for non-convex problems.
\newblock In {\em International conference on machine learning}, pages 560--569. PMLR, 2018.

\bibitem{stochgradmethodbiasedestimation}
Jia Bi and Steve~R. Gunn.
\newblock A stochastic gradient method with biased estimation for faster nonconvex optimization.
\newblock {\em arXiv}, 2019.

\bibitem{birgin2017worst}
Ernesto~G Birgin, JL~Gardenghi, Jos{\'e}~Mario Mart{\'\i}nez, Sandra~Augusta Santos, and Ph~L Toint.
\newblock Worst-case evaluation complexity for unconstrained nonlinear optimization using high-order regularized models.
\newblock {\em Mathematical Programming}, 163(1):359--368, 2017.

\bibitem{dimensionfreeacelerationgdconvex}
Yair Carmon, John~C. Duchi, Oliver Hinder, and Aaron Sidford.
\newblock "convex until proven guilty": Dimension-free acceleration of gradient descent on non-convex functions.
\newblock {\em International Conference on Machine Learning}, 2017.

\bibitem{carmon2018accelerated}
Yair Carmon, John~C Duchi, Oliver Hinder, and Aaron Sidford.
\newblock Accelerated methods for nonconvex optimization.
\newblock {\em SIAM Journal on Optimization}, 28(2):1751--1772, 2018.

\bibitem{LowerBoundsI}
Yair Carmon, John~C Duchi, Oliver Hinder, and Aaron Sidford.
\newblock Lower bounds for finding stationary points i.
\newblock {\em Mathematical Programming}, 2019.

\bibitem{LowerBoundsII}
Yair Carmon, John~C Duchi, Oliver Hinder, and Aaron Sidford.
\newblock Lower bounds for finding stationary points ii: First-order methods.
\newblock {\em Mathematical Programming}, 2019.

\bibitem{lion}
Xiangning Chen, Chen Liang, Da~Huang, Esteban Real, Kaiyuan Wang, Yao Liu, Hieu Pham, Xuanyi Dong, Thang Luong, Cho-Jui Hsieh, Yifeng Lu, and Quoc~V. Le.
\newblock Symbolic discovery of optimization algorithms.
\newblock {\em Advances in Neural Information Processing Systems}, 2023.

\bibitem{demidovich2024guide}
Yury Demidovich, Grigory Malinovsky, Igor Sokolov, and Peter Richt{\'a}rik.
\newblock A guide through the zoo of biased sgd.
\newblock {\em Advances in Neural Information Processing Systems}, 36, 2024.

\bibitem{biasedstochgradestimation}
Derek Driggs, Jingwei Liang, and Carola-Bibiane Schönlieb.
\newblock On biased stochastic gradient estimation.
\newblock {\em Journal of Machine Learning Research}, 2022.

\bibitem{dryden2016communication}
Nikoli Dryden, Tim Moon, Sam~Ade Jacobs, and Brian Van~Essen.
\newblock Communication quantization for data-parallel training of deep neural networks.
\newblock In {\em 2016 2nd Workshop on machine learning in hpc environments (MLHPC)}, pages 1--8. IEEE, 2016.

\bibitem{spider}
Cong Fang, Chris Junchi~Li, Zhouchen Lin, and Tong Zhang.
\newblock Spider: Near-optimal non-convex optimization via stochastic path integrated differential estimator.
\newblock {\em Advances in Neural Information Processing Systems}, 2018.

\bibitem{restartedsgd}
Cong Fang, Zhouchen Lin, and Tong Zhang.
\newblock Sharp analysis for nonconvex sgd escaping from saddle points.
\newblock {\em Conference on Learning Theory}, 2019.

\bibitem{stochasticfirstandzerothorder}
Saeed Ghadimi and Guanghui Lan.
\newblock Stochastic first-and zeroth-order methods for nonconvex stochastic programming.
\newblock {\em SIAM Journal on Optimization}, 2013.

\bibitem{hu2016bandit}
Xiaowei Hu, LA~Prashanth, Andr{\'a}s Gy{\"o}rgy, and Csaba Szepesvari.
\newblock (bandit) convex optimization with biased noisy gradient oracles.
\newblock In {\em Artificial Intelligence and Statistics}, pages 819--828. PMLR, 2016.

\bibitem{biasedstochasticfirstordermetalearning}
Yifan Hu, Siqi Zhang, Xin Chen, and Niao He.
\newblock Biased stochastic first-order methods for conditional stochastic optimization and applications in meta learning.
\newblock {\em Advances in Neural Information Processing Systems}, 2020.

\bibitem{adam}
Diederik~P. Kingma and Jimmy Ba.
\newblock Adam: A method for stochastic optimization.
\newblock {\em International Conference on Learning Representations}, 2015.

\bibitem{optmethodstoccompositeopt}
Guanghui Lan.
\newblock An optimal method for stochastic composite optimization.
\newblock {\em Mathematical Programming}, 2011.

\bibitem{li2023restarted}
Huan Li and Zhouchen Lin.
\newblock Restarted nonconvex accelerated gradient descent: No more polylogarithmic factor in the $o(\epsilon^{-7/4})$ complexity.
\newblock {\em Journal of Machine Learning Research}, 24(157):1--37, 2023.

\bibitem{sophia}
Hong Liu, Zhiyuan Li, David Hall, Percy Liang, and Tengyu Ma.
\newblock Sophia: A scalable stochastic second-order optimizer for language model pre-training.
\newblock {\em International Conference on Learning Representations}, 2024.

\bibitem{liu2025muon}
Jingyuan Liu, Jianlin Su, Xingcheng Yao, Zhejun Jiang, Guokun Lai, Yulun Du, Yidao Qin, Weixin Xu, Enzhe Lu, Junjie Yan, et~al.
\newblock Muon is scalable for llm training.
\newblock {\em arXiv preprint arXiv:2502.16982}, 2025.

\bibitem{stochcontrollablebiasedoracles}
Yin Liu and Sam~Davanloo Tajbakhsh.
\newblock Stochastic optimization algorithms for problems with controllable biased oracles.
\newblock {\em arXiv}, 2026.

\bibitem{stacey}
Xinyu Luo, Cedar~Site Bai, Bolian Li, Petros Drineas, Ruqi Zhang, and Brian Bullins.
\newblock Stacey: Promoting stochastic steepest descent via accelerated $\ell_p$-smooth nonconvex optimization.
\newblock {\em International Conference on Machine Learning}, 2025.

\bibitem{matrixconcentration}
Lester Mackey, Michael~I. Jordan, Richard~Y. Chen, Brendan Farrell, and Joel~A. Tropp.
\newblock Matrix concentration inequalities via the method of exchangeable pairs.
\newblock {\em The Annals of Probability}, 2014.

\bibitem{nesterov2012make}
Yurii Nesterov.
\newblock How to make the gradients small.
\newblock {\em Optima. Mathematical Optimization Society Newsletter}, (88):10--11, 2012.

\bibitem{nesterov2017random}
Yurii Nesterov and Vladimir Spokoiny.
\newblock Random gradient-free minimization of convex functions.
\newblock {\em Foundations of Computational Mathematics}, 17(2):527--566, 2017.

\bibitem{stochasticcubicreg}
Nilesh Tripuraneni, Mitchell Stern, Chi Jin, Jeffrey Regier, and Michael~I. Jordan.
\newblock Stochastic cubic regularization for fast nonconvex optimization.
\newblock {\em Advances in Neural Information Processing Systems}, 2017.

\bibitem{zhou2026sharp}
Dongruo Zhou.
\newblock Sharp first-order lower bounds for higher-order smooth nonconvex optimization.
\newblock {\em arXiv preprint arXiv:2606.05438}, 2026.

\bibitem{stochasticnestedvarreduction}
Dongruo Zhou, Pan Xu, and Quanquan Gu.
\newblock Stochastic nested variance reduction for nonconvex optimization.
\newblock {\em Advances in Neural Information Processing Systems}, 2018.

\end{thebibliography}
